\documentclass{article}
\usepackage{amsmath}
\usepackage{amssymb, amscd, amsthm, amsfonts, latexsym}
\usepackage{enumerate}
\usepackage[dvips]{graphicx,color}

\usepackage{latexsym}
\newcommand{\no}{\noindent}
\newtheorem{thm}{Theorem}[section]
\newtheorem{prop}[thm]{Proposition}
\newtheorem{cor}[thm]{Corollary}
\newtheorem{rem}[thm]{Remark}
\newtheorem{defi}[thm]{Definition}
\newtheorem{ex}[thm]{Example}
\newtheorem{lem}[thm]{Lemma}

\numberwithin{equation}{section}

\newcommand{\R}{\mathbb{R}}

\newcommand{\ds}{\displaystyle}

\newcommand{\sm}{\setminus}
\newcommand{\pd}{\partial}
\newcommand{\al}{\alpha}

\newcommand{\ga}{\gamma}
\newcommand{\Ga}{\Gamma}
\newcommand{\de}{\delta}
\newcommand{\De}{\Delta}
\newcommand{\ep}{\varepsilon}
\newcommand{\la}{\lambda}
\newcommand{\f}{\varphi}

\newcommand{\Ome}{\Omega}

\renewcommand{\(}{\left(}
\renewcommand{\)}{\right)}
\renewcommand{\lvert}{\left\vert}
\renewcommand{\rvert}{\right\vert}

\DeclareMathOperator{\diam}{diam}
\DeclareMathOperator{\conv}{conv}
\DeclareMathOperator{\supp}{supp}

\DeclareMathOperator{\dist}{dist}
\DeclareMathOperator{\Vol}{Vol}

\DeclareMathOperator{\Refl}{Refl}

\DeclareMathOperator{\sign}{sign}
\DeclareMathOperator{\Ker}{Ker}

\def\c#1{\overset{\mbox{\tiny $\circ$}}{#1}} 

\textwidth=16. true cm
\textheight=22. true cm
\voffset=-1.5 true cm
\hoffset=-2.0 true cm

\begin{document}

\title{\bf Stationary radial centers and characterization of convex polyhedrons}

\author{\bf Shigehiro Sakata}

\date{\today}

\maketitle

\begin{abstract} 
We investigate centers of a body (the closure of a bounded open set) in $\R^m$ defined as maximum points of potentials. In particular, we study centers defined by the Riesz potential and by Poisson's integral. These centers, in general, depend on parameters and move with respect to the parameters. We give a necessary and sufficient condition for the existence of a center independent of a parameter.\\

\no{\it Keywords and phrases}.
Radial center, $r^{\al -m}$-center, illuminating center, hot spot, power concavity, balance law.\\
\no 2010 {\it Mathematics Subject Classification}:
35B38, 52B15, 26B25, 51M16, 52A10.
\end{abstract}
\section{Introduction}

Let $\Ome$ be a body (the closure of a bounded open set) in $\R^m$. In this paper, we investigate centers of $\Ome$ from three motivations.

The first motivation of this paper comes from Moszy\'{n}ska's {\it radial center} of a star body $A$. In \cite{M1}, Moszy\'{n}ska looked for a ``good'' position of the origin for $A$ and defined a center of $A$ as a maximizer of a function of the form
\begin{equation}\label{Mos}
\Phi_A (x) = \int_{S^{m-1}} \phi \( \rho_{A-x} (v) \) d\sigma (v) ,\ 
x \in \Ker A := \left\{ x \in A \lvert \forall y \in A,\ \overline{xy} \subset A \right\} \right. .
\end{equation}
Here, $\rho_{A-x} (v) = \max \{ \la \geq 0 \vert \la v +x \in A \}$ is the {\it radial function} of $A$ with respect to $x$. A maximum or minimum point of $\Phi_A$ is called a {\it radial center of $A$ associated with $\phi$}. In particular, when $\phi (\rho ) = \rho^\al /\al$ $(\al >0)$, it is called a {\it radial center of order $\al$}. We refer to \cite[Introduction]{M1} for the historical background of the study on radial centers (see also \cite[Part III]{M2}).

In \cite{O1}, O'Hara investigated a maximizer of the potential 
\begin{equation}
V_\Ome^{(\al )}=
\begin{cases}
\ds \sign (m-\al ) \int_\Ome \lvert x-y \rvert^{\al -m} dy &(0< \al \neq m),\\
\ds -\int_\Ome \log \lvert x-y \rvert dy &(\al =m) ,
\end{cases}
,\ x \in \R^m.
\end{equation} 
Since the potential $V_\Ome^{(\al )}$ is defined for every (not necessarily star shaped) body $\Ome$, we can regard a maximum point of $V_\Ome^{(\al )}$ as a radial center of $\Ome$ of order $\al$. Moreover, taking the Hadamard finite part of $V_\Ome^{(\al )}(x)$ for every interior point $x$ of $\Ome$, he defined a radial center of $\Ome$ for every $\al \in \R$ and called a maximum point of $V_\Ome^{(\al )}$ an {\it $r^{\al-m}$-center}.

In \cite{O1}, it was shown that if $m \geq 2$ and $\al \geq m+2$, then $\Ome$ has a unique $r^{\al -m}$-center, and that, as $\al$ goes to infinity, the unique $r^{\al-m}$-center converges to the {\it circumcenter} (the center of the minimal ball containing $\Ome$) of $\Ome$.  It is easy to check that the $r^2$-center is the centroid (center of mass) of $\Ome$. Thus an $r^{\al -m}$-center moves with respect to $\al$ in general.

The second motivation of this paper comes from the problem ``Where should we put a streetlight in a triangular park?'' proposed by PISA (Programme for International Student Assessment) in 2003. PISA's answer for this problem was the circumcenter of a triangular park. In other words, PISA considered this problem as a min-max problem: Minimize the distance between a streetlight and every vertex of the park (the darkest point). 

In \cite{Sh}, Shibata studied PISA's problem in view of potential theory and suggested another point to put a streetlight. He defined the ``brightness'' of a triangular park $\triangle$ with a streetlight of height $h$ located at point $x$ as the function
\begin{equation}\label{brightness}
\R^2 \ni x \mapsto \int_\triangle \frac{h}{\( \lvert x-y \rvert^2+ h^2 \)^{3/2}} dy \in \R .
\end{equation}
A maximum point of the brightness function is called an {\it illuminating center} of $\triangle$ of height $h$. The definition of \eqref{brightness} is based on Kepler's inverse square law. Precisely, for a point $y \in \triangle$, we define the ``local brightness'' at $y$ as 
\begin{equation}
\R^2 \ni x \mapsto \( \sqrt{ \lvert x-y \rvert^2 +h^2} \)^{-2} \frac{h}{\sqrt{ \lvert x-y \rvert^2 +h^2}} \in \R ,
\end{equation}
and we integrate the local brightness over $\triangle$.

In \cite{Skt1}, the author generalized Shibata's notion for a body $\Ome$ in $\R^m$ and indicated that the brightness function is proportional to {\it Poisson's integral},
\begin{equation}
P_\Ome (x, h) = \frac{2h}{\sigma_m \( S^m \)} \int_\Ome \frac{1}{\( \lvert x-y \rvert^2 + h^2 \)^{(m+1)/2}} dy ,\ x \in \R^m,\ h>0 .
\end{equation}
Here, we remark that, putting
\begin{equation}
\phi (\rho ) = \int_0^\rho \frac{r^{m-1}}{\( r^2 +h^2 \)^{(m+1)/2}} dr
\end{equation}
in \eqref{Mos}, an illuminating center is an example of a radial center.

In \cite{Skt1}, it was shown that if $h \geq \sqrt{m+2} \diam \Ome$, then $\Ome$ has a unique illuminating center, and that, as $h$ goes to infinity, the unique illuminating center converges to the centroid of $\Ome$. In \cite{Skt3}, it was shown that the limit point of any convergent sequence of illuminating centers $c(h_j)$ of height $h_j$ with $h_j \to 0^+$ is an $r^{-1-m}$-center. Thus an illuminating center moves with respect to $h$ in general. 

The third motivation of this paper comes from the study on geometric properties for the heat equation. It is well-known that the function
\begin{equation}
W_\Ome (x,t) = \frac{1}{\( 4\pi t\)^{m/2}} \int_\Ome \exp \( -\frac{\lvert x-y \rvert^2}{4t} \) dy,\ x\in \R^m ,\ t>0,
\end{equation}
is the solution of the Cauchy problem for the heat equation with initial datum $\chi_\Ome$. A maximum point of $W_\Ome (\cdot ,t)$ is called a {\it hot spot} of $\Ome$ at time $t$. We remark that, putting
\begin{equation}
\phi (\rho ) = \int_0^\rho \exp \( -\frac{r^2}{4t} \) r^{m-1} dr
\end{equation}
in \eqref{Mos}, a hot spot is an example of a radial center.

In \cite{JS}, it was indicated that if $t \geq ( \diam \Ome )^2/2$, then $\Ome$ has a unique hot spot. In \cite{CK}, it was shown that, as $t$ goes to infinity, the unique hot spot converges to the centroid of $\Ome$. In \cite{KP}, it was shown that the limit point of any convergent sequence of hot spots $h(t_j )$ at time $t_j$ with $t_j \to 0^+ $ is an {\it incenter} (the center of a maximal ball contained in $\Ome$) of $\Ome$. Thus a hot spot moves with respect to $t$ in general. 

In \cite{MS1}, Magnanini and Sakaguchi gave a necessary and sufficient condition for the existence of a critical point of $W_\Ome(\cdot ,t)$ not moving with respect to $t$. The condition is called the {\it balance law}. Precisely, the origin is a stationary critical point of $W_\Ome (\cdot t,)$ if and only if $\Ome$ satisfies the equation
\begin{equation}
\int_{rS^{m-1} \cap \Ome} v d \sigma (v) =0
\end{equation}
for any $r \geq 0$. Using the balance law, in \cite{MS4}, they characterized a convex polyhedron having a stationary hot spot. (To tell the truth, in \cite{MS4}, they studied a stationary hot spot for the Dirichlet problem which is more difficult than the Cauchy problem.)

Among the above three motivations, there is a common feature. They are the studies on a critical point (or a maximum point) of a function depending on a parameter. Moreover, their critical points move with respect to parameters in general. From such a background, in this paper, we investigate a necessary and sufficient condition for the existence of a critical point of $V_\Ome^{(\al )}$ and $P_\Ome (\cdot ,h)$ not moving with respect to $\al$ and $h$, respectively. We show that the condition is Magnanini and Sakaguchi's balance law. Furthermore, using the balance law and a stationary $r^{\al -m}$-center, we give an elementary proof for the characterization of a convex polygon shown in \cite[Theorem 6]{MS4}. Precisely, in an elementary argument, we show that a triangle and a convex quadrangle having a stationary $r^{\al -2}$-center are a equilateral triangle and a parallelogram, respectively. \\

Throughout this paper, $\c{X}$ (or $X^\circ$), $\bar{X}$, $X^c$ and $\conv X$ denote the interior, closure, complement and convex hull of a set $X$ in $\R^m$, respectively. We denote the surface Lebesgue measure of an $N$-dimensional space by $\sigma_N$. In particular, the symbol $\sigma$ is used in the case of $N=m-1$ for short. For a number $a$, a point $p \in \R^m$ and a set $X \subset \R^m$, we use the notation (the Minkowski addition) $aX+p = \{ ax+p \vert x \in X \}$. In particular, if necessary, we denote by $B_\rho (x) =\rho B^m +x$ and $S_\rho (x) = \rho S^{m-1}+x$ the $m$-dimensional closed ball of radius $\rho$ centered at $x$ and the $(m-1)$-dimensional sphere of radius $\rho$ centered at $x$, respectively.  \\

\no{\bf Acknowledgements.} The author would like to express his deep gratitude to Professors Jun O'Hara, Rolando Magnanini and Paolo Salani. Professor O'Hara suggested  the study on the uniqueness of an illuminating center of a convex body. Professors Magnanini and Salani gave him many kind advices on the power concavity of functions and the balance law.

The author is partially supported by Waseda University Grant for Special Research Project 2015K-335 and JSPS Kakenhi Grant Number 26887041.
\section{Preliminaries}
\subsection{Riesz potentials of a body}

In this subsection, we prepare terminologies on the Riesz potential (and its extension) from \cite{O1}. In particular, we remark the differentiability of Riesz potentials.

\begin{defi}[{\cite[Definition 2.1]{O1}}]\label{O1def2.1}
{\rm 
Let $\Ome$ be a body (the closure of a bounded open set) in $\R^m$. We define the {\it $r^{\al-m}$-potential} of $\Ome$ of order $\al$ as follows:
\begin{enumerate}[(1)]
\item When $0< \al \neq m$, we define
\[
V_\Ome^{(\al )} (x) = \sign (m-\al ) \int_\Ome \lvert x-y \rvert^{\al -m} dy,\ x \in \R^m.
\]
\item When $\al =m$, we define
\[
V_\Ome^{(m)} (x) = -\int_\Ome \log \lvert x-y \rvert dy ,\ x \in \R^m .
\]
\item When $\al =0$, we define
\[
V_\Ome^{(0)}(x)=
\begin{cases}
\ds \lim_{\ep \to 0^+} \( \int_{\Ome \sm B_\ep (x)} \lvert x-y \rvert^{-m} dy -\sigma \( S^{m-1} \) \log \frac{1}{\ep} \) &\( x \in \c{\Ome} \) ,\\
\ds \int_\Ome \lvert x-y \rvert^{-m} dy & \( x \in \Ome^c \) .
\end{cases}
\]
\item When $\al <0$, we define
\[
V_\Ome^{(\al )} (x) =
\begin{cases}
\ds \lim_{\ep \to 0^+} \( \int_{\Ome \sm B_\ep (x)} \lvert x-y \rvert^{\al -m} dy -\frac{\sigma \( S^{m-1} \)}{-\al} \frac{1}{\ep^{-\al}} \) &\( x \in \c{\Ome} \) ,\\
\ds \int_\Ome \lvert x-y \rvert^{\al -m} dy & \( x \in \Ome^c \) .
\end{cases}
\]
\end{enumerate}
When $0<\al <m$, $V_\Ome^{(\al)}$ is so-called the {\it Riesz potential}.
}
\end{defi}

\begin{prop}[{\cite[Proposition 2.5]{O1}}]\label{O1prop2.5}
Let $\Ome$ be a body in $\R^m$. 
\begin{enumerate}[$(1)$]
\item When $\al =0$ and $x$ is in the interior of $\Ome$, for any $0< \ep < \dist ( x, \Ome^c )$, we have
\[
V_\Ome^{(0)} (x) =\int_{\Ome \sm B_\ep (x)} \lvert x-y \rvert^{-m} dy -\sigma \( S^{m-1} \) \log \frac{1}{\ep}.
\]
\item When $\al <0$ and $x$ is in the interior of $\Ome$, we have
\[
V_\Ome^{(\al )} (x) = -\int_{\Ome^c} \lvert x-y \rvert^{\al -m} dy .
\]
\end{enumerate}
\end{prop}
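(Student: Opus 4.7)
The key observation is that both parts are concrete identities for $x$ in the interior of $\Ome$, where the only obstruction to integrability is the singularity of $|x-y|^{\al-m}$ at $y=x$. Since $B_\ep(x) \subset \Ome$ for $0 < \ep < \dist(x,\Ome^c)$, the troublesome singularity lies inside $\Ome$, so we can extract it explicitly in polar coordinates and cancel it against the subtracted counterterm appearing in Definition~\ref{O1def2.1}. My plan is to do this computation for each of (1) and (2).

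For part (1), the plan is to show that the quantity inside the limit defining $V_\Ome^{(0)}(x)$ is already independent of $\ep$ once $\ep < \dist(x,\Ome^c)$. Take any $0 < \ep' < \ep < \dist(x,\Ome^c)$. Using disjointness $\Ome \sm B_{\ep'}(x) = (\Ome \sm B_\ep(x)) \sqcup (B_\ep(x) \sm B_{\ep'}(x))$ and polar coordinates centered at $x$, I compute
\[
\int_{B_\ep(x) \sm B_{\ep'}(x)} |x-y|^{-m} dy = \sigma(S^{m-1}) \int_{\ep'}^{\ep} \frac{dr}{r} = \sigma(S^{m-1}) \bigl( \log \tfrac{1}{\ep'} - \log \tfrac{1}{\ep} \bigr).
\]
This identity rearranges to say that $\int_{\Ome \sm B_\ep(x)} |x-y|^{-m} dy - \sigma(S^{m-1}) \log \tfrac{1}{\ep}$ takes the same value for $\ep'$ and $\ep$. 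Therefore the expression is constant in $\ep$ on this range, hence equal to the limit defining $V_\Ome^{(0)}(x)$.

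For part (2), the strategy is similar but now with a large ball cutoff as well. Fix $R$ large enough that $\Ome \subset B_R(x)$ and fix $0 < \ep < \dist(x,\Ome^c)$. Using polar coordinates I compute the full annular integral
\[
\int_{B_R(x) \sm B_\ep(x)} |x-y|^{\al-m} dy = \frac{\sigma(S^{m-1})}{\al}\bigl(R^\al - \ep^\al\bigr) = \frac{\sigma(S^{m-1})}{\al} R^\al + \frac{\sigma(S^{m-1})}{-\al}\frac{1}{\ep^{-\al}},
\]
using $\al<0$ to rewrite $-\ep^\al/\al$. Splitting $B_R(x) \sm B_\ep(x) = (\Ome \sm B_\ep(x)) \sqcup (B_R(x) \sm \Ome)$ and subtracting the counterterm gives
\[
\int_{\Ome \sm B_\ep(x)} |x-y|^{\al-m} dy - \frac{\sigma(S^{m-1})}{-\al}\frac{1}{\ep^{-\al}} = \frac{\sigma(S^{m-1})}{\al} R^\al - \int_{B_R(x) \sm \Ome} |x-y|^{\al-m} dy.
\]
The left-hand side is constant in $\ep$, so it equals $V_\Ome^{(\al)}(x)$ after letting $\ep \to 0^+$. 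Now I let $R \to \infty$: since $\al<0$, the first term on the right vanishes, and monotone convergence (the integrand is non-negative and $|x-y|^{\al-m}$ is integrable on $\Ome^c$ because the only singularity $y=x$ lies in the interior of $\Ome$, while at infinity the integrand decays like $r^{\al-m}$ with $\al-m < -m$) yields the claimed formula.

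The computation is essentially routine; the only care needed is bookkeeping the sign conventions when $\al<0$ (so that $\ep^\al$ blows up and $R^\al$ vanishes) and verifying integrability of $|x-y|^{\al-m}$ on $\Ome^c$ for the passage $R\to\infty$, both of which I've addressed above.
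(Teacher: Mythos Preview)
Your argument is correct. Note, however, that the paper does not supply its own proof of this proposition: it is quoted verbatim from \cite[Proposition 2.5]{O1} as a preliminary, so there is nothing in the present paper to compare against. Your computation---showing that the counterterm exactly cancels the polar-coordinate contribution of the small ball $B_\ep(x)$ (so the regularized expression is constant in $\ep$), and for $\al<0$ complementing this with a large-ball cutoff to extract the $\Ome^c$ integral---is precisely the standard Hadamard finite-part calculation one would expect in \cite{O1}, and all the bookkeeping (signs for $\al<0$, integrability of $|x-y|^{\al-m}$ over $\Ome^c$) is handled cleanly.
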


\begin{prop}[{\cite[Proposition 2.9 and p.379]{O1}}]\label{O1prop2.9}
Let $\Ome$ be a body in $\R^m$ with a piecewise $C^1$ boundary. We denote by $n$ the unit outer normal vector filed of $\pd \Ome$. If $x$ is not on the boundary of $\Ome$, then we have
\[
\nabla V_\Ome^{(\al )} (x) =
\begin{cases}
\ds -\sign (m-\al ) \int_{\pd \Ome} \lvert x-y \rvert^{\al -m} n(y) d\sigma (y) &(\al \neq m) ,\\
\ds \int_{\pd \Ome} \log \lvert x-y \rvert n(y) d\sigma (y) &( \al =m) .
\end{cases}
\]
\end{prop}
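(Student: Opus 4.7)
My approach is to differentiate under the integral sign, recognize each Cartesian derivative of the integrand as a $y$-divergence via the symmetry $\nabla_x |x-y|^{\al-m} = -\nabla_y |x-y|^{\al-m}$ (and its logarithmic analogue $\nabla_x \log|x-y| = -\nabla_y \log|x-y|$), and then apply the divergence theorem to convert the resulting volume integrals into boundary integrals over $\pd\Ome$.

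When $x \in \Ome^c$ the integrand $|x-y|^{\al-m}$ is smooth on the compact set $\Ome$, so direct differentiation under the integral is justified; the divergence theorem applied to the vector field $|x-y|^{\al-m} e_i$ on $\Ome$ then yields
\[
\pd_{x_i} V_\Ome^{(\al)}(x) = -\sign(m-\al) \int_\Ome \pd_{y_i}|x-y|^{\al-m}\, dy = -\sign(m-\al) \int_{\pd \Ome} |x-y|^{\al-m} n_i(y)\, d\sigma(y),
\]
and the case $\al=m$ follows by the same argument with $\log|x-y|$.

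When $x \in \c{\Ome}$ the integrand has a singularity at $y=x$, so I would excise a small ball $B_\ep(x)$ with $\ep < \dist(x, \pd\Ome)$ and pass to the limit $\ep \to 0^+$. For $\al > 0$ with $\al \neq m$, the complementary piece $\int_{B_\ep(x)}|x-y|^{\al-m}\,dy = \sigma(S^{m-1})\ep^\al/\al$ is independent of $x$ and contributes nothing to the gradient. On the remaining domain $\Ome \sm B_\ep(x)$, the moving-domain Leibniz rule produces a flux term along $\pd B_\ep(x)$ proportional to $\int_{\pd B_\ep(x)}(y-x)\,d\sigma(y)$, which vanishes by spherical symmetry; the divergence theorem then converts the resulting volume integral into boundary integrals over $\pd \Ome$ and $\pd B_\ep(x)$, and the latter again vanishes by symmetry, since the integrand there reduces to the constant $\ep^{\al-m}$ times a radial vector field. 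The case $\al = m$ runs identically with $\log\ep$ replacing $\ep^{\al-m}$. For $\al \leq 0$ I would invoke Proposition~\ref{O1prop2.5}: for $\al<0$ it rewrites $V_\Ome^{(\al)}(x)$ as $-\int_{\Ome^c}|x-y|^{\al-m}\,dy$, whose integrand is non-singular on $\Ome^c$ and decays fast enough at infinity to permit both differentiation under the integral and the divergence theorem on $\Ome^c$ with no contribution at infinity; the outer normal of $\Ome^c$ along $\pd\Ome$ is $-n$, which supplies the correct sign. For $\al=0$, Proposition~\ref{O1prop2.5}(1) combined with the same ball-excision trick (the counterterm $\sigma(S^{m-1})\log(1/\ep)$ is $x$-independent) again reduces matters to the divergence theorem.

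The step I would verify most carefully is the vanishing of both the Leibniz flux and the divergence-theorem boundary term on $\pd B_\ep(x)$, which is what lets the cutoff be removed in the limit and produces precisely the claimed integral over $\pd \Ome$. This cutoff-ball argument is essential for small positive $\al$ (notably $0 < \al \leq 1$), since there the pointwise derivative $\nabla_x |x-y|^{\al-m}$ fails to be integrable over $\Ome$ and naive differentiation under the integral is not permitted.
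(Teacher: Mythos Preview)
The paper does not supply its own proof of this proposition: it is quoted verbatim from O'Hara \cite[Proposition 2.9 and p.379]{O1} and stated without argument, so there is nothing in the present paper to compare your proposal against.

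That said, your outline is sound. The key observations---the symmetry $\nabla_x |x-y|^{\al-m} = -\nabla_y |x-y|^{\al-m}$, the $x$-independence of $\int_{B_\ep(x)}|x-y|^{\al-m}\,dy$, and the exact vanishing of both the moving-boundary flux and the divergence-theorem contribution on $\pd B_\ep(x)$ by spherical symmetry---are correct and do the job. One small comment: because the $\pd B_\ep(x)$ terms vanish \emph{exactly} for every fixed $\ep<\dist(x,\pd\Ome)$, no limit $\ep\to 0^+$ is actually needed in the interior case for $\al>0$; the formula holds on the nose once $\ep$ is admissible. Your treatment of $\al<0$ via Proposition~\ref{O1prop2.5} and the divergence theorem on $\Ome^c$ (with the decay $|x-y|^{\al-m}=O(|y|^{\al-m})$ killing the contribution at infinity, since $\al-m+(m-1)=\al-1<0$) is also correct, as is the $\al=0$ case.
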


\begin{lem}\label{diffV01}
Let $\Ome$ be a body in $\R^m$ with a piecewise $C^1$ boundary. We denote by $n$ the unit outer normal vector filed of $\pd \Ome$. If $x$ is in the interior of $\Ome$, then we have
\[
\nabla V_\Ome^{(\al )} (x)=
\begin{cases}
\ds (\al -m) \sign (m-\al ) \int_{\Ome \sm B_\ep (x)} \lvert x-y \rvert^{\al -m-2} (x-y ) dy &( \al \neq m),\\
\ds - \int_{\Ome \sm B_\ep (x)} \frac{x-y}{\lvert x-y \rvert^2} dy &(\al =m) 
\end{cases} 
\]
for any $0< \ep < \dist (x,\Ome^c )$.
\end{lem}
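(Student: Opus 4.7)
The plan is to isolate the singularity of $|x-y|^{\al-m}$ by decomposing $\Ome=B\cup E$ with $B:=B_\ep(x)$ and $E:=\Ome\setminus B$, to establish the pointwise identity $V_\Ome^{(\al)}(x')=V_B^{(\al)}(x')+V_E^{(\al)}(x')$ on a small neighborhood of $x$, and then to differentiate each piece separately. The exterior piece $V_E^{(\al)}$ has a uniformly bounded, smooth integrand near $x$, because $|x'-y|\geq\ep-|x'-x|$ is bounded below for $x'$ near $x$ and $y\in E$, so classical differentiation under the integral sign applies. The ball piece $V_B^{(\al)}$ carries the whole singularity, but Proposition \ref{O1prop2.9} applied to the ball $B$ at its own center $x$ expresses its gradient as a surface integral over $S_\ep(x)$ on which the kernel is constant, and by symmetry $\int_{S_\ep(x)}n(y)d\sigma(y)=0$.

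The step that requires the most care is the additivity $V_\Ome^{(\al)}=V_B^{(\al)}+V_E^{(\al)}$ when $\al\leq 0$, where $V_\Ome^{(\al)}$ is defined as a Hadamard finite part. Fix $x'$ with $|x'-x|<\ep/2$ and $\delta\in(0,\dist(x',E))$; then $B_\delta(x')\subset B^\circ$, and one writes
\[
\int_{\Ome\setminus B_\delta(x')}|x'-y|^{\al-m}dy=\int_{B\setminus B_\delta(x')}|x'-y|^{\al-m}dy+\int_E|x'-y|^{\al-m}dy.
\]
Subtracting the counterterm $\sigma(S^{m-1})\delta^\al/(-\al)$ (or $\sigma(S^{m-1})\log(1/\delta)$ when $\al=0$) and sending $\delta\to 0^+$, the counterterm attaches only to the first summand and turns it into $V_B^{(\al)}(x')$; the second summand is exactly $V_E^{(\al)}(x')$ via the $x\in\Ome^c$ clauses of Definition \ref{O1def2.1}, since $x'\notin E$. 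For $0<\al\neq m$ and for $\al=m$ the decomposition is just linearity of the integral.

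With this in place, differentiation under the integral applied to $V_E^{(\al)}$ at $x'=x$ yields
\[
\nabla V_E^{(\al)}(x)=\sign(m-\al)(\al-m)\int_E|x-y|^{\al-m-2}(x-y)\,dy\qquad(\al\neq m),
\]
and $\nabla V_E^{(m)}(x)=-\int_E(x-y)/|x-y|^2\,dy$. Meanwhile, Proposition \ref{O1prop2.9} applied to the body $B$ at its interior point $x$ gives $\nabla V_B^{(\al)}(x)=-\sign(m-\al)\ep^{\al-m}\int_{S_\ep(x)}n(y)d\sigma(y)=0$ and, for $\al=m$, $\nabla V_B^{(m)}(x)=(\log\ep)\int_{S_\ep(x)}n(y)d\sigma(y)=0$, since the outward unit normal integrated over a sphere centered at $x$ vanishes. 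Summing the two gradient contributions produces the stated formula.
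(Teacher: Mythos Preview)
Your proof is correct and takes a genuinely different route from the paper's. The paper starts from the boundary formula of Proposition~\ref{O1prop2.9} applied to $\Ome$ itself, inserts a vanishing integral over $\pd B_\ep(x)$ to recognize the boundary of $\Ome\setminus B_\ep(x)$, and then applies Stokes' theorem to convert the surface integral into the desired volume integral; the cases $\al\notin[0,1]$ are dispatched by referring to the already-known formulas in Remark~\ref{diffV} together with the symmetry $\int_{B_\ep(x)}|x-y|^{\al-m-2}(x-y)\,dy=0$. Your argument instead splits the potential additively as $V_\Ome^{(\al)}=V_{B_\ep(x)}^{(\al)}+V_{\Ome\setminus B_\ep(x)}^{(\al)}$ near $x$, differentiates the non-singular exterior piece directly under the integral sign, and kills the ball piece via Proposition~\ref{O1prop2.9} applied to $B_\ep(x)$ at its center. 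The paper's route is shorter for the critical range $0\leq\al\leq 1$ (one Stokes application does the job) and avoids checking additivity of the Hadamard finite part; your route is more uniform across all $\al$ and makes no use of the divergence theorem, at the cost of the extra paragraph verifying that the counterterm attaches only to the ball when $\al\leq 0$. One minor point: $E=\Ome\setminus B_\ep(x)$ is not closed, so it is not literally a ``body'' in the sense of Definition~\ref{O1def2.1}, but since $x'$ stays a positive distance from $\overline{E}$ this is harmless for the integrals you write.
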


\begin{proof}
When $m \geq 2$ and $\al \notin [0,1]$, Definition \ref{O1def3.1} and Proposition \ref{O1prop2.5} guarantee the statement (see also Remark \ref{diffV}). Let us give a proof for the case of $m \geq 2$ and $0\leq \al \leq 1$. The one-dimensional case goes parallel.

Fix an interior point $x$ of $\Ome$. We take an $0< \ep < \dist (x, \Ome^c)$. Thanks to Proposition \ref{O1prop2.9}, we have
\begin{align*}
\nabla V_\Ome^{(\al )}(x)
&= - \( \int_{\pd \Ome} - \int_{\pd B_\ep (x)} \) \lvert x-y \rvert^{\al -m} n(y) d \sigma (y) \\
&=- \int_{\pd \( \Ome \sm B_\ep (x) \)} \lvert x-y \rvert^{\al -m} n(y) d \sigma (y) \\
&=  \int_{\Ome \sm B_\ep (x)} \nabla \lvert x-y \rvert^{\al -m} dy \\
&= (\al -m) \int_{\Ome \sm B_\ep (x)} \lvert x-y \rvert^{\al -m-2} (x-y) dy .
\end{align*}
Here, the third equality follows from Stokes' theorem. 
\end{proof}

\begin{lem}[{\cite[Lemma 2.13]{O1}}]\label{O1lem2.13}
Let $\Ome$ be a body in $\R^m$.
\begin{enumerate}[$(1)$]
\item For a point $z$ on the boundary of $\Ome$, we assume the existence of positive constants $\ep$ and $\theta$ such that $B_\ep (z) \cap \Ome^c$ contains an open cone of vertex $z$ and aperture angle $\theta$. If $\al \leq 0$, then the function $V_\Ome^{(\al )}(x)$ goes to $-\infty$ as $x \in \c{\Ome}$ approaches to $z$.
\item For a point $z$ on the boundary of $\Ome$, we assume the existence of positive constants $\ep$ and $\theta$ such that $B_\ep (z) \cap \Ome$ contains a closed cone of vertex $z$ and aperture angle $\theta$. If $\al \leq 0$, then the function $V_\Ome^{(\al )}(x)$ goes to $+\infty$ as $x \in \Ome^c$ approaches to $z$.
\end{enumerate}
\end{lem}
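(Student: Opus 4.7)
The plan is to reduce every case of the lemma to a single ``cone--integral divergence'' statement, namely that
\[
I(x) := \int_C \lvert x - y \rvert^{\al - m}\, dy \longrightarrow +\infty \quad \text{as } x \to z,
\]
where $C$ denotes the (open or closed) cone of vertex $z$ and aperture $\theta$ supplied by the hypothesis. The reductions in case $(2)$ and in case $(1)$ for $\al < 0$ are direct from Definition \ref{O1def2.1} and Proposition \ref{O1prop2.5}: in case $(2)$, for $x \in \Ome^c$, $V_\Ome^{(\al)}(x) = \int_\Ome \lvert x-y \rvert^{\al-m}\, dy \geq I(x)$ since $C \subset \Ome$, while in case $(1)$ for interior $x$ and $\al < 0$, Proposition \ref{O1prop2.5}(2) yields $V_\Ome^{(\al)}(x) = -\int_{\Ome^c} \lvert x-y \rvert^{\al-m}\, dy \leq -I(x)$ since $C \subset \Ome^c$. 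Once $I(x) \to +\infty$ is established, the desired limits follow.

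I would prove the divergence of $I$ by polar coordinates at the vertex $z$. Parametrizing $C \supset \{z + rv : 0 < r < \ep,\ v \in S\}$ with $S \subset S^{m-1}$ of positive $\sigma$-measure (provided by the positive aperture), and setting $\tau = \lvert x - z \rvert$, the triangle inequality together with $\al - m < 0$ gives
\[
I(x) \geq \sigma(S) \int_0^\ep (r + \tau)^{\al - m} r^{m-1}\, dr = \sigma(S)\, \tau^{\al} \int_0^{\ep / \tau} (s+1)^{\al - m} s^{m-1}\, ds
\]
after the substitution $r = \tau s$. For $\al < 0$, the prefactor $\tau^{\al}$ blows up while the $s$-integral tends to the finite value $\int_0^\infty (s+1)^{\al - m} s^{m-1}\, ds$; for $\al = 0$, $\tau^0 = 1$ but the $s$-integral diverges at infinity like $\int s^{-1}\, ds$. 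In either regime, $I(x) \to +\infty$ as $\tau \to 0^+$.

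The only part that I expect to require real care is case $(1)$ at $\al = 0$, where the potential carries a logarithmic renormalization. Here the plan is to fix $R$ large enough that $\Ome \subset B_R(x)$ for every $x$ near $z$ (possible since $\Ome$ is bounded), apply Proposition \ref{O1prop2.5}(1) with any $0 < \ep < \dist(x, \Ome^c)$, and use the explicit evaluation $\int_{B_R(x) \setminus B_\ep(x)} \lvert x - y \rvert^{-m}\, dy = \sigma\( S^{m-1}\) \log(R/\ep)$ to derive the identity
\[
V_\Ome^{(0)}(x) = \sigma\( S^{m-1}\) \log R\ -\ \int_{B_R(x) \cap \Ome^c} \lvert x - y \rvert^{-m}\, dy,
\]
in which the $\log \ep$ contributions cancel cleanly and the remaining quantity no longer depends on the auxiliary $\ep$. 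Since $C \subset B_R(x) \cap \Ome^c$ once $x$ is close enough to $z$, this is bounded above by $\sigma\( S^{m-1}\) \log R - I(x)$, which tends to $-\infty$ by the previous paragraph. The bookkeeping of this regularization --- verifying that the logarithmically divergent pieces really cancel so that an unambiguous $I(x)$ contribution is left over --- is the only place where something beyond the cone estimate is needed.
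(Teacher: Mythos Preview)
The paper does not prove this lemma; it is quoted verbatim from \cite[Lemma 2.13]{O1} and used as input for later results, so there is no in-paper argument to compare against. On its own merits your proof is correct: the reduction of (2) and of (1) with $\al<0$ to the cone integral $I(x)$ via Definition~\ref{O1def2.1} and Proposition~\ref{O1prop2.5}(2) is immediate, your triangle-inequality/scaling estimate $I(x)\ge \sigma(S)\,\tau^{\al}\int_0^{\ep/\tau}(s+1)^{\al-m}s^{m-1}\,ds$ gives the divergence in both regimes $\al<0$ and $\al=0$, and your treatment of (1) at $\al=0$ --- subtracting the explicit annulus integral $\int_{B_R(x)\setminus B_\ep(x)}|x-y|^{-m}\,dy=\sigma(S^{m-1})\log(R/\ep)$ to cancel the $\log(1/\ep)$ counterterm and arrive at $V_\Ome^{(0)}(x)=\sigma(S^{m-1})\log R-\int_{B_R(x)\cap\Ome^c}|x-y|^{-m}\,dy$ --- is exactly the right bookkeeping and leaves a clean $-I(x)$ bound. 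The only cosmetic point worth tightening in a final write-up is to state explicitly that $R$ is chosen with $R>\diam\Ome+\ep$ (or similar), so that $C\subset B_\ep(z)\subset B_R(x)$ holds uniformly for $x$ in a neighbourhood of $z$; you allude to this but do not pin it down.
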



\begin{lem}\label{diffVbd}
Let $\Ome$ be a body in $\R^m$. A point $z$ on the boundary of $\Ome$ satisfies the following conditions:
\begin{itemize}
\item There is a motion $g \in O( m)$ such that $g(\Ome -z)$ around the origin can be obtained as the space under the graph of a continuous function $f :\R^{m-1} \to \R$. Namely, there are positive constants $\rho$ and $b$ with 
\[
\( \rho B^{m-1} \times [-b, b] \) \cap g \( \Ome -z \) 
= \left\{ y = \( y',y_m \) \in \R^{m-1} \times \R \lvert -b \leq y_m \leq f \( y' \) ,\ y' \in \rho B^{m-1} \right\} \right. .
\]
\item The boundary of $\Ome$ satisfies the inner and outer cone conditions at $z$. Namely, there is a positive constant $C$ such that, for any $y' \in \rho B^{m-1}$, the inequality $\vert f ( y' ) \vert \leq C \vert y' \vert$ holds.
\end{itemize}
If $0 < \al \leq 1$, then we have
\[
\lim_{h \to 0} \frac{V_{g ( \Ome -z)}^{(\al)} \( he_m \) -V_{g(\Ome -z )}^{(\al )}(0)}{h} = -\infty ,
\]
that is, $V_\Ome^{(\al)}$ is not differentiable at $z$.
\end{lem}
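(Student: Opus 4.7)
The plan is to analyze the difference quotient $\bigl(V_\Ome^{(\al)}(he_m) - V_\Ome^{(\al)}(0)\bigr)/h$ directly, exploiting the explicit local graph description of $\Ome$ near $z$. Replacing $\Ome$ by $g(\Ome - z)$, I may assume $z = 0$ and that $\Ome$ coincides with the subgraph of $f$ inside the cylinder $C := \rho B^{m-1} \times [-b, b]$, where $|f(y')| \leq C|y'|$ (and in particular $f(0) = 0$). Outside $C$ the quantities $|y|$ and $|he_m - y|$ are uniformly bounded below by a positive constant for $|h|$ small, so by dominated convergence the contribution of $\Ome \sm C$ to the difference quotient converges to a finite limit. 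It therefore suffices to show that the local piece $D_{\rm loc}(h)/h \to -\infty$ as $h \to 0$.

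I first treat $m \geq 2$ and $0 < \al < m$, leaving the residual case $m = \al = 1$ (handled by the logarithmic definition of $V_\Ome^{(\al)}$) to a direct one-dimensional computation paralleling the preceding lemma. Write $g_{y'}(t) := (|y'|^2 + t^2)^{(\al-m)/2}$. By Fubini and the substitution $u = y_m - h$ in the $|he_m - y|^{\al - m}$ term, the inner integral over $y_m \in [-b, f(y')]$ reduces, for $h > 0$ small, to
\begin{equation*}
\int_{-b-h}^{-b} g_{y'}(u)\, du \;-\; \int_{f(y')-h}^{f(y')} g_{y'}(t)\, dt.
\end{equation*}
The first term is at most $h\, b^{\al - m}$ uniformly in $y'$, and hence contributes a bounded amount after division by $h$ and integration over $\rho B^{m-1}$. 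For the second, since $|t| \leq |f(y')| + h \leq C|y'| + h$ on $[f(y')-h, f(y')]$ and $s \mapsto s^{(\al-m)/2}$ is decreasing,
\begin{equation*}
\frac{1}{h} \int_{f(y')-h}^{f(y')} g_{y'}(t)\, dt \;\geq\; \bigl(|y'|^2 + (C|y'| + h)^2\bigr)^{(\al-m)/2}.
\end{equation*}
As $h \downarrow 0$ this lower bound increases monotonically to $(1 + C^2)^{(\al-m)/2} |y'|^{\al - m}$. Passing to $(m-1)$-dimensional polar coordinates and invoking monotone convergence,
\begin{equation*}
\int_{\rho B^{m-1}} \bigl(|y'|^2 + (C|y'| + h)^2\bigr)^{(\al-m)/2}\, dy' \;\nearrow\; (1 + C^2)^{(\al-m)/2} \sigma\bigl(S^{m-2}\bigr) \int_0^\rho r^{\al - 2}\, dr = +\infty,
\end{equation*}
where the last integral diverges because $\al \leq 1$. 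Therefore $D_{\rm loc}(h)/h \to -\infty$ as $h \to 0^+$.

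For $h \to 0^-$ the same substitution produces the analogous decomposition $\int_{f(y')}^{f(y')+|h|} g_{y'} - \int_{-b}^{-b+|h|} g_{y'}$, and the cone bound again gives $\int_{f(y')}^{f(y')+|h|} g_{y'}(t)\, dt \geq |h|\bigl(|y'|^2 + (C|y'| + |h|)^2\bigr)^{(\al-m)/2}$. The same monotone-convergence argument then yields $D_{\rm loc}(h)/|h| \to +\infty$, and dividing by $h < 0$ gives $-\infty$ on this side as well. The main obstacle is confirming that the two-sided cone condition $|f(y')| \leq C|y'|$ is sharp enough to produce the singular profile $|y'|^{\al-m}$ in the transverse $y'$-integral: the outer cone bound is precisely what keeps the integration interval $[f(y')-h, f(y')]$ close to $t = 0$ (where $g_{y'}$ attains its maximum), so that the divergence $\int_{\rho B^{m-1}} |y'|^{\al - m}\, dy' = +\infty$ actually materializes --- a cuspidal boundary would let $|t|$ stay large and destroy the blow-up.
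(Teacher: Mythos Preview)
Your proof is correct and follows essentially the same approach as the paper: reduce to the local subgraph piece, use the vertical substitution $u=y_m-h$ to rewrite the difference quotient as $\int_{-b-h}^{-b}-\int_{f(y')-h}^{f(y')}$, bound the near-boundary interval via the cone condition $|t|\leq C|y'|+h$, and pass to polar coordinates in $y'$ to exhibit the divergent integral $\int_0^\rho r^{\al-2}\,dr$. The only cosmetic difference is that the paper completes the square in $r^2+(Cr+h)^2$ and takes the limit of the resulting explicit integral, whereas you invoke monotone convergence on the lower bound; both yield the same divergence.
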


\begin{proof}
Let us consider the case of $m \geq 2$. The one-dimensional case is directly checked.

After a motion of $\R^m$, we may assume that $z=0$ and 
\[
\Ome' 
:= \( \rho B^{m-1} \times [-b,b] \) \cap \Ome 
= \left\{ y = \( y',y_m \) \in \R^{m-1} \times \R \lvert -b \leq y_m \leq f \( y' \) ,\ y' \in \rho B^{m-1} \right\} \right. .
\]
We decompose $V_\Ome^{(\al)} = V_{\Ome'}^{(\al )} +V_{\Ome \sm \Ome'}^{(\al)}$. Since $\Ome \sm \Ome'$ does not contain the origin, the function $V_{\Ome \sm \Ome'}^{(\al )}$ is differentiable at the origin. Let us consider the differentiability of $V_{\Ome'}^{(\al )}$ at the origin.

Fix an arbitrary $0<h<b$. We have
\[
\frac{V_{\Ome'}^{(\al)} \( he_m \) - V_{\Ome'}^{(\al )} (0)}{h}
=\frac{1}{h} \int_{\rho B^{m-1}} \( \( \int_{-b-h}^{-b} - \int_{f \( y' \)-h}^{f \( y' \)} \) \lvert y \rvert^{\al -m} dy_m \) dy' .
\]
The continuity of the function $y_m \mapsto \vert y \vert^{\al -m}$ implies
\[
\lim_{h \to 0^+} \frac{1}{h} \int_{\rho B^{m-1}}  \( \int_{-b-h}^{-b}  \lvert y \rvert^{\al -m} dy_m \) dy' 
= \int_{\rho B^{m-1}} \( \lvert y' \rvert^2 + b^2 \)^{(\al -m)/2} dy' .
\]
Since $\al - m \leq 1-m  <0$, the cone conditions of the boundary of $\Ome$ at the origin implies 
\begin{align*}
&\frac{1}{h} \int_{\rho B^{m-1}} \( \int_{f \( y' \)-h}^{f \( y' \)} \lvert y \rvert^{\al -m} dy_m \) dy'\\ 
&\geq \frac{1}{h} \int_{\rho B^{m-1}} \( \int_{f \( y' \)-h}^{f \( y' \)} \( \lvert y' \rvert^2 + \( \lvert f \( y' \) \rvert +h \)^2 \)^{(\al -m)/2} dy_m \) dy' \\ 
&\geq \frac{1}{h} \int_{\rho B^{m-1}} \( \int_{f \( y' \)-h}^{f \( y' \)} \( \lvert y' \rvert^2 + \(  C \lvert y'  \rvert +h \)^2 \)^{(\al -m)/2} dy_m \) dy' \\
&= \sigma_{m-2} \( S^{m-2} \) \( 1+C^2 \)^{(\al -m)/2} \int_0^\rho \( \( r+ \frac{Ch}{1+C^2} \)^2 + \( \frac{h}{1+C^2} \)^2 \)^{(\al -m)/2} r^{m-2} dr .
\end{align*}
Here, for the equality, we used the polar coordinate $y' \mapsto rv$ ($0\leq r \leq \rho$, $v \in S^{m-2}$). Since $(\al -m) + (m-2) \leq -1$, the above right hand side diverges to $+\infty$ as $h$ tends to $0^+$. Hence we obtain
\[
\lim_{h \to 0^+} \frac{V_{\Ome'}^{(\al)} \( he_m \) - V_{\Ome'}^{(\al )} (0)}{h} =-\infty .
\]

In the same manner as above, we can obtain the same conclusion for $-b < h<0$. Thus the proof is completed.
\end{proof}

\begin{rem}\label{diffV}
{\rm Let $\Ome$ be a body in $\R^m$. Let us summarize the differentiability of $V_\Ome^{(\al )}$.
\begin{enumerate}[(1)]
\item If $\al >1$, then we have
\[
\nabla V_\Ome^{(\al )}(x) 
=
\begin{cases}
\ds (\al -m)  \sign (m-\al ) \int_\Ome \lvert x-y \rvert^{\al -m-2} (x-y) dy  &( \al \neq m),\\
\ds -\int_\Ome \frac{x-y}{\lvert x-y \rvert^2} dy &(\al =m)
\end{cases}
\]
for any $x \in \R^m$ (see \cite[Proposition 2.6]{Skt1}).
\item If $\al \leq 1$ and $x \in \Ome^c$, then we have
\[
\nabla V_\Ome^{(\al )}(x) 
= 
\begin{cases}
\ds (\al -m)  \int_\Ome \lvert x-y \rvert^{\al -m-2} (x-y) dy &( \al <m=1 \ {\rm or}\ m \geq 2),\\
\ds -\int_\Ome \frac{x-y}{\lvert x-y \rvert^2} dy &(\al =m=1) .
\end{cases}
\]
This fact can be shown in the same manner as in \cite[Proposition 2.6]{Skt1}.
\item If $\al <0$ and $x \in \c{\Ome}$, then we have
\[
\nabla V_\Ome^{(\al )}(x) 
= - (\al -m)  \int_{\Ome^c} \lvert x-y \rvert^{\al -m-2} (x-y) dy .
\]
Using the second assertion in Proposition \ref{O1prop2.5}, this fact can be shown in the same manner as in \cite[Proposition 2.6]{Skt1}.
\item If $0 \leq \al \leq 1$, $x \in \c{\Ome}$ and $\Ome$ has a piecewise $C^1$ boundary, then $\nabla V_\Ome^{(\al )}(x)$ is given in Lemma \ref{diffV01}.
\item If $\al  \leq 1$ and $x \in \pd \Ome$, then, from Lemmas \ref{O1lem2.13} and \ref{diffVbd}, $\nabla V_\Ome^{(\al )}(x)$ does not exist in general.
\end{enumerate}
}
\end{rem}

\begin{defi}[{\cite[Definitions 3.1 and 3.17]{O1}}]\label{O1def3.1}
{\rm Let $\Ome$ be a body in $\R^m$.
\begin{enumerate}[(1)]
\item When $\al >0$, a point $c$ is called an {\it $r^{\al -m}$-center} of $\Ome$ if it gives the maximum value of $V_\Ome^{(\al)}$.
\item When $\al \leq 0$, a point $c$ is called an {\it $r^{\al -m}$-center} of $\Ome$ if it gives the maximum value of the restriction of $V_\Ome^{(\al)}$ to the interior of $\Ome$.
\item A point $c$ is called an {\it $r^\infty$-center} or a {\it circumcenter} of $\Ome$ if it gives the minimum value of the function
\[
\R^m \ni x \mapsto \max_{y \in \Ome} \lvert x-y \rvert \in \R .
\]
\item A point $c$ is called an {\it $r^{-\infty}$-center} or an {\it incenter} of $\Ome$ if it gives the maximum value of the function 
\[
\R^m \ni x \mapsto \max_{y \in \overline{\Ome^c}} \lvert x-y \rvert \in \R .
\]
\end{enumerate}
}
\end{defi}

\begin{ex}[{\cite[Example 3.1]{O1}}]\label{O1ex3.1}
{\rm 
Let $\Ome$ be a body in $\R^m$. The $r^2$-center of $\Ome$ is the centroid of $\Ome$,
\[
G_\Ome = \frac{1}{\Vol (\Ome)} \int_\Ome ydy .
\]
}
\end{ex}

\begin{thm}[{\cite[Theorem 3.5]{O1}}]\label{O1thm3.5}
Let $\Ome$ be a body in $\R^m$. For any $\al \in \R$, $\Ome$ has an $r^{\al -m}$-center.
\end{thm}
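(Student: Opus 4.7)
The plan is to apply the direct method of the calculus of variations, split according to the range of $\al$, after establishing that $V_\Ome^{(\al)}$ is continuous on its natural domain (all of $\R^m$ when $\al > 0$; the interior $\Ome^\circ$ when $\al \leq 0$) and has controlled behavior at the boundary of that domain.

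For $\al > 0$, dominated convergence shows $V_\Ome^{(\al)}$ is continuous on $\R^m$, since the kernel $|x-y|^{\al-m}$ (respectively $-\log|x-y|$ when $\al = m$) is integrable in $y \in \Ome$ uniformly on compact sets of $x$, because $\al - m > -m$ produces at worst an integrable singularity at $y = x$. For the behavior at infinity, when $\al \geq m$ the sign factor $\sign(m-\al) \leq 0$ combined with the growth of the kernel yields $V_\Ome^{(\al)}(x) \to -\infty$ as $|x| \to \infty$, whereas for $0 < \al < m$ the potential is strictly positive and decays as $V_\Ome^{(\al)}(x) \to 0^+$ at infinity. In either subcase a maximizing sequence is confined to a bounded set, a subsequence converges, and continuity supplies the maximum.

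For $\al \leq 0$, I would restrict $V_\Ome^{(\al)}$ to $\Ome^\circ$ and argue similarly there. Proposition \ref{O1prop2.5} gives the regular representation $V_\Ome^{(\al)}(x) = -\int_{\Ome^c} |x-y|^{\al-m} dy$ for $\al < 0$, and an $\varepsilon$-truncated analog for $\al = 0$; both expressions are continuous in $x \in \Ome^\circ$. Comparison with the potential of a ball circumscribing $\Ome$ furnishes a uniform upper bound, so $M := \sup_{\Ome^\circ} V_\Ome^{(\al)}$ is finite. Lemma \ref{O1lem2.13} tells us that at any boundary point $z$ around which $\Ome^c$ contains an open cone, $V_\Ome^{(\al)}(x) \to -\infty$ as $x \to z$ from $\Ome^\circ$. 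A maximizing sequence $\{x_n\} \subset \Ome^\circ$ with $V_\Ome^{(\al)}(x_n) \to M$ then has a subsequence converging to some $x^* \in \overline{\Ome}$; the boundary divergence excludes $x^* \in \pd \Ome$ at such a regular point, so $x^* \in \Ome^\circ$ and continuity yields $V_\Ome^{(\al)}(x^*) = M$.

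The hard part will be controlling the boundary behavior at irregular points $z \in \pd \Ome$ when $\al \leq 0$. For a body defined merely as the closure of a bounded open set, some boundary points may fail the exterior-cone hypothesis of Lemma \ref{O1lem2.13}, and at such points one must still exclude $x^* = z$ as a limit of a maximizing sequence. I would attempt this via a measure-theoretic replacement for the cone: since $\Ome$ equals the closure of its interior, $\Ome^c$ has positive Lebesgue measure in every neighborhood of $z$, and the strong singularity $|x-y|^{\al-m}$ (with exponent $\leq -m$) should force divergence of the integral over $\Ome^c \cap B_\rho(x)$ as $\dist(x, \Ome^c) \to 0$, independently of the geometric shape of the complement — via a polar-coordinate estimate analogous to the one in the proof of Lemma \ref{diffVbd}. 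This density-based boundary divergence is the technical crux of the theorem.
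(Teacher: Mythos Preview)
The paper does not give its own proof of this theorem; it is quoted verbatim from \cite[Theorem 3.5]{O1} as background. So there is no in-paper argument to compare against, and your proposal must be judged on its own.

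Your treatment of $\al > 0$ is correct and standard. Your identification of the difficulty for $\al \leq 0$ is also correct: Lemma \ref{O1lem2.13} only covers boundary points with an exterior cone, and a general body can have boundary points without one.

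However, your proposed repair does not work. You are right that $\Ome^c$ is open, hence has positive measure in every ball around any $z \in \pd\Ome$; but positive measure alone does \emph{not} force $\int_{\Ome^c}\lvert x-y\rvert^{\al-m}\,dy \to +\infty$ as $x \to z$. Take $m=2$, fix $\al<0$, choose $\beta > 1-\al$, and let
\[
\Ome = \overline{B_1(0)} \setminus C,\qquad C=\bigl\{(t,s): 0<t<\tfrac12,\ \lvert s\rvert < t^\beta\bigr\}.
\]
Then $\Ome$ is a body, $z=0\in\pd\Ome$, and near $z$ the complement $\Ome^c$ is the thin cusp $C$. For $x=(-\de,0)\in\Ome^\circ$ and $y=(t,s)\in C$ one has $\lvert x-y\rvert \geq \lvert y\rvert$, so
\[
\int_{C}\lvert x-y\rvert^{\al-2}\,dy \;\leq\; \int_{C}\lvert y\rvert^{\al-2}\,dy \;\approx\; \int_0^{1/2} 2t^{\beta}\,t^{\al-2}\,dt \;<\;\infty
\]
since $\beta+\al-2>-1$. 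Thus $V_\Ome^{(\al)}(x)$ stays bounded as $x\to z$ along this ray, and your ``density-based boundary divergence'' fails. A polar-coordinate estimate in the spirit of Lemma \ref{diffVbd} cannot rescue this, because that lemma's conclusion depends essentially on the cone (Lipschitz-graph) hypothesis.

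To close the gap you need a different mechanism to exclude boundary maximizers when $\al\le 0$: for instance, a monotonicity argument showing that from any $z\in\pd\Ome$ there is a direction into $\Ome^\circ$ along which $-\int_{\Ome^c}\lvert x-y\rvert^{\al-m}\,dy$ strictly increases (a refinement of the folding idea behind Proposition \ref{critlocation}), rather than relying on divergence to $-\infty$.
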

\subsection{Poisson's integral for the upper half space}
Let $f :\R^m \to \R$ be a bounded function. We denote {\it Poisson's integral} by
\begin{equation}
Pf (x,h) = \int_{\R^m} p (x-y,h) f(y) dy,\ x \in \R^m ,\ h>0 ,
\end{equation}
where $p$ is {\it Poisson's kernel},
\begin{equation}
p(z,h) = \frac{2}{\sigma_m \( S^m \)} \frac{h}{\( \lvert z\rvert^2 +h^2  \)^{(m+1)/2}},\ z \in \R^m ,\ h>0.
\end{equation}

It is well-known that $Pf$ satisfies the Laplace equation on the upper half space, that is,
\begin{equation}
\( \frac{\pd^2}{\pd h^2} + \De \) Pf(x,h) = \( \frac{\pd^2}{\pd h^2} + \sum_{j=1}^m \frac{\pd^2}{\pd x_j^2} \) Pf(x,h) =0 ,\ x \in \R^m,\ h>0.
\end{equation}
Furthermore, if $f$ is continuous at $x$, then we have
\begin{equation}
\lim_{h \to 0^+} Pf(x ,h) = f(x) .
\end{equation}

\begin{prop}[{\cite[Propositions 5.16 and 5.19]{Skt1}}]\label{existenceP}
Let $f$ be a non-zero non-negative bounded function with compact support.
\begin{enumerate}[$(1)$]
\item For each $h>0$, the function $Pf (\cdot ,h) :\R^m \to \R$ has a maximum value, and it can only be attained at points of the convex hull of $\supp f$.
\item The set of maximum points of $Pf (\cdot ,h)$ converges to the one-point set of the centroid of $f$ as $h$ goes to infinity.
\end{enumerate}
\end{prop}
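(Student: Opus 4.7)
The plan is to address (1) and (2) by distinct arguments. For the existence assertion in (1), note that for each fixed $h > 0$, Poisson's kernel $p(\cdot, h)$ is continuous and decays like $|z|^{-(m+1)}$ at infinity. Since $f$ is bounded with compact support, $Pf(\cdot, h)$ is continuous on $\R^m$ with $Pf(x,h) \to 0$ as $|x| \to \infty$, and is strictly positive everywhere because $f \ge 0$, $f \not\equiv 0$ and $p(\cdot, h) > 0$. Hence a finite positive maximum is attained.

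For the localization assertion in (1), suppose for contradiction that a maximum $x_0$ lies outside $\conv \supp f$. The separating hyperplane theorem furnishes a unit vector $e$ such that $e \cdot (x_0 - y) > 0$ for every $y \in \supp f$. Differentiation under the integral sign (justified by the compact support of $f$ and the smoothness of $p(\cdot, h)$) gives
\[
\nabla_x Pf(x_0, h) = -\frac{2(m+1)h}{\sigma_m(S^m)} \int_{\supp f} \frac{x_0 - y}{(|x_0-y|^2 + h^2)^{(m+3)/2}} f(y) dy ,
\]
and dotting with $e$ produces a strictly negative quantity. Thus $x_0$ is not even a critical point, contradicting maximality.

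For (2), I would carry out a large-$h$ Taylor expansion of the kernel. Writing
\[
h^m Pf(x,h) = \frac{2}{\sigma_m(S^m)} \int_{\R^m} \frac{f(y)}{(1 + |x-y|^2/h^2)^{(m+1)/2}} dy
\]
and using $(1+s)^{-(m+1)/2} = 1 - \tfrac{m+1}{2} s + O(s^2)$, one sees that for $x$ in the compact set $\conv \supp f$ (to which all maxima are confined by (1)) the ratio $|x-y|^2/h^2$ tends to $0$ uniformly in $y \in \supp f$, so termwise integration yields
\[
h^m Pf(x,h) = \frac{2M}{\sigma_m(S^m)} - \frac{m+1}{\sigma_m(S^m) h^2} \int |x-y|^2 f(y) dy + O(h^{-4})
\]
uniformly in $x \in \conv \supp f$, where $M = \int f > 0$. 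The identity $\int |x-y|^2 f(y) dy = M |x-G_f|^2 + C$, with $C$ independent of $x$, shows that the second-order correction is strictly maximized at the unique point $x = G_f$.

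To conclude, let $h_j \to \infty$ and let $x_{h_j}$ be any maximum of $Pf(\cdot, h_j)$. By (1) the sequence $\{ x_{h_j} \}$ lies in the compact set $\conv \supp f$ and thus admits a convergent subsequence with some limit $x_\infty$. The uniform expansion above, combined with the maximality inequality $Pf(x_{h_j}, h_j) \ge Pf(G_f, h_j)$, forces $|x_\infty - G_f|^2 \le 0$, so $x_\infty = G_f$; hence every convergent subsequence has limit $G_f$, which is the claimed set-theoretic convergence. The main obstacle is justifying the uniform Taylor expansion and cleanly extracting the coefficient of $h^{-(m+2)}$ from the maximality inequality; compactness of $\conv \supp f \times \supp f$ makes the remainder estimate tractable, and once that is in hand the limit argument for the maximizers is routine.
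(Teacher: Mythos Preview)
The paper does not supply its own proof of this proposition: it is quoted as a preliminary result from \cite{Skt1} (Propositions 5.16 and 5.19) and stated without argument. There is therefore nothing in the present paper to compare your attempt against directly.

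That said, your argument is sound on its own terms. For (1), the existence part is standard (continuity plus decay at infinity), and the localization via a separating hyperplane and the explicit gradient is correct; note that since $\conv\supp f$ is closed, a point outside it is strictly separated, so the inner product $e\cdot(x_0-y)$ is indeed strictly positive on $\supp f$ and the directional derivative is strictly negative. For (2), the uniform Taylor expansion of $(1+s)^{-(m+1)/2}$ on the compact set $\conv\supp f\times\supp f$ is legitimate, and the quadratic identity $\int |x-y|^2 f(y)\,dy = M|x-G_f|^2 + C$ isolates the leading $x$-dependence correctly. In fact your inequality $Pf(x_{h_j},h_j)\ge Pf(G_f,h_j)$, after multiplying through by $h_j^{m+2}$, already gives $|x_{h_j}-G_f|^2 = O(h_j^{-2})$ directly, so the subsequence detour is unnecessary: every sequence of maximizers converges to $G_f$, which is precisely the set convergence claimed.
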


For a body (the closure of a bounded open set) $\Ome$ in $\R^m$, we use the notation $P_\Ome (x,h) = P \chi_\Ome (x,h)$ for short. Throughout this paper, we call a maximum point of $P_\Ome (\cdot ,h) :\R^m \to \R$ an {\it illuminating center} of $\Ome$ of height $h$.
\subsection{The Cauchy problem for the heat equation}
Let $f :\R^m \to \R$ be a bounded function. Let 
\begin{equation}
Wf (x,t) = \int_{\R^m} w(x-y,t) f(y) dy,\ x \in \R^m ,\ t>0 ,
\end{equation}
where $w$ is {\it Weierstrass' kernel},
\begin{equation}
w (z,t) = \frac{1}{\( 4\pi t \)^{m/2}} \exp \( -\frac{\lvert z \rvert^2}{4t} \) ,\ z \in \R^m ,\ t>0.
\end{equation}

It is well-known that $Wf$ satisfies the heat equation, that is,
\begin{equation}
\( \frac{\pd}{\pd t} - \De \) Wf(x,t) = \( \frac{\pd}{\pd t} - \sum_{j=1}^m \frac{\pd^2}{\pd x_j^2} \) Wf(x,t) =0 ,\ x \in \R^m,\ t>0.
\end{equation}
Furthermore, if $f$ is continuous at $x$, then we have
\begin{equation}
\lim_{t \to 0^+} Wf(x ,t) = f(x) .
\end{equation}

\begin{thm}[{\cite[Theorem 1]{CK}}]\label{existenceW}
Let $f$ be a non-zero non-negative bounded function with compact support.
\begin{enumerate}[$(1)$]
\item For each $t>0$, the function $Wf (\cdot ,t) :\R^m \to \R$ has a maximum value, and it can only be attained at points of the convex hull of $\supp f$.
\item The set of maximum points of $Wf (\cdot ,t)$ converges to the one-point set of the centroid of $f$ as $t$ goes to infinity.
\end{enumerate}
\end{thm}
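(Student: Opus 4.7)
The plan is to prove Theorem \ref{existenceW} by treating the two parts separately: part (1) follows from smoothness, decay, and a directional derivative computation using the fact that the Gaussian kernel is radially decreasing, while part (2) uses the critical point equation together with a compactness argument.

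For part (1), I first note that $Wf(\cdot ,t)$ is smooth (indeed real-analytic) on $\R^m$ because $w(\cdot ,t)$ is smooth and $f$ is bounded with compact support, so differentiation under the integral sign is legitimate. Since $w(z,t) \to 0$ as $\lvert z \rvert \to \infty$, dominated convergence gives $Wf(x,t) \to 0$ as $\lvert x \rvert \to \infty$, and as $Wf(\cdot ,t)$ is strictly positive (because $f$ is non-zero and non-negative), it attains its maximum on $\R^m$. To show the maximum lies in $\conv ( \supp f )$, I would fix $x_0 \notin \conv ( \supp f )$ and, by strong separation of a point from a compact convex set, choose a unit vector $n$ and a constant $c$ with $n \cdot x_0 < c < n \cdot y$ for every $y \in \supp f$. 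Using $\nabla_z w(z,t) = -(z/2t) w(z,t)$, the directional derivative
\[
\frac{d}{ds} Wf( x_0 + sn , t) \Big|_{s=0} = \frac{1}{2t} \int_{\R^m} (y - x_0) \cdot n \, w(x_0 - y ,t) f(y) dy
\]
is strictly positive, since the integrand is non-negative and strictly positive on the set where $f>0$, which has positive measure. Hence $x_0$ is not a maximum, proving part (1).

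For part (2), let $c(t)$ denote any maximum point of $Wf(\cdot ,t)$; by part (1), $c(t) \in \conv ( \supp f )$, a fixed compact set. The critical point equation $\nabla Wf ( c(t) ,t) = 0$, after dividing by the common factor, reads
\[
\int_{\R^m} ( c(t) -y ) \exp \( -\frac{\lvert c(t) - y \rvert^2}{4t} \) f(y) dy = 0 .
\]
Because $\lvert c(t) - y \rvert$ is uniformly bounded by $D := \diam ( \conv ( \supp f ) )$ for $y \in \supp f$, the exponential factor converges to $1$ uniformly in $c(t)$ and $y$ as $t \to \infty$. Given any sequence $t_j \to \infty$, extract a convergent subsequence $c(t_{j_k}) \to c_\infty \in \conv ( \supp f )$; passing to the limit in the equation above yields
\[
\int_{\R^m} ( c_\infty - y ) f(y) dy = 0 ,
\]
equivalently $c_\infty = G_f$. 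Since every convergent subsequence has the same limit $G_f$ and the $c(t)$ lie in a compact set, the whole family of maximum points converges to $\{ G_f \}$.

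The main obstacle is the rigorous justification of the limit interchange in part (2): one has to make the uniform convergence argument precise even if the maximum point $c(t)$ is not unique (so that ``$c(t)$'' really denotes an arbitrary selection from a possibly multi-valued set). This is handled by the subsequence argument above, which shows every accumulation point must equal $G_f$. A secondary subtlety in part (1) is that the hypothesis ``$f$ non-zero'' must be interpreted as $\int f > 0$ to make the strict positivity of the directional derivative meaningful, but this is automatic because $Wf$ is unchanged under modifications of $f$ on null sets.
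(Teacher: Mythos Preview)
The paper does not supply its own proof of this theorem: it is stated with attribution to \cite[Theorem 1]{CK} and used as a black box, exactly as the analogous Proposition \ref{existenceP} is quoted from \cite{Skt1}. Your argument is correct and is in fact the standard one: the radial monotonicity of the Gaussian gives the separating-hyperplane argument for part (1), and the uniform bound $\lvert c(t)-y\rvert \leq \diam(\conv \supp f)$ lets you pass to the limit in the critical-point equation for part (2). The only cosmetic point is that your subsequence argument already proves the set-convergence statement directly (if some maximum point stayed $\varepsilon$-away from $G_f$ along $t_j\to\infty$, your computation would contradict this), so there is nothing further to justify there.
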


For a body (the closure of a bounded open set) $\Ome$ in $\R^m$, we use the notation $W_\Ome (x,t) = W \chi_\Ome (x,t)$ for short. Throughout this paper, we call a maximum point of $W_\Ome (\cdot ,t) :\R^m \to \R$ a {\it hot spot} at time $t$.
\subsection{Location of critical points of potentials}

In this subsection, we prepare results on the location of critical points of our potentials $V_\Ome^{(\al)}$, $P_\Ome (\cdot ,h)$ and $W_\Ome (\cdot ,t)$.

Using Alexandrov's reflection principle (\cite{GNN, Ser}), we can restrict the location of critical points of potentials.

\begin{defi}[{\cite[Introduction]{BMS}, \cite[Introduction]{BM}, \cite[Definition 3.3]{O1}}]
{\rm Let $\Ome$ be a body in $\R^m$. Fix $v \in S^{m-1}$ and $b\in \R$. Let 
\[
\Ome^+_{v,b} = \left\{ z \in \R^m \lvert z \cdot v \geq b \right\} \right. ,
\]
and $\Refl_{v,b} :\R^m \to \R^m$ the reflection in the hyperplane $\{ z \in \R^m \vert z \cdot v =b\}$. We denote the {\it maximal folding function} by
\[
l (v) = \inf \left\{ a \in \R \lvert \forall b\geq a,\ \Refl_{v,b} \( \Ome^+_{v,b} \) \subset \Ome \right\} \right.  .
\]
Define the {\it minimal unfolded region} or the {\it heart} of $\Ome$ as
\[
Uf(\Ome ) = \heartsuit (\Ome ) = \bigcap_{v \in S^{m-1}} \left\{ z \in \R^m \lvert z \cdot v \leq l (v) \right\} \right. .
\]
}
\end{defi}

\begin{rem}[{\cite[p.381]{O1}}]
{\rm 
The minimal unfolded region of a body $\Ome$ is contained in the convex hull of $\Ome$ but, in general, not contained in $\Ome$.
}
\end{rem}

The idea of the proof of the following proposition is due to \cite[Introduction]{MS4} or \cite[Theorem 3.5]{O1}.
\begin{prop}\label{critlocation}
Let $\Ome$ be a body in $\R^m$.
\begin{enumerate}[$(1)$]
\item For any $\al >1$, every critical point of $V_\Ome^{(\al)}$ belongs to $Uf (\Ome ) \sm \pd ( \conv \Ome )$.
\item For any $0 \leq \al \leq 1$, every critical point of $V_\Ome^{(\al )}$ belongs to $(\conv \Ome )^\circ \cup \pd \Ome$. Moreover, if $\Ome$ has a piecewise $C^1$ boundary, then every critical point of $V_\Ome^{(\al )}$ belongs to $Uf(\Ome ) \cup \pd \Ome$.
\item For any $\al <0$, every critical point of $V_\Ome^{(\al )}$ belongs to $Uf (\Ome ) \cup \pd \Ome$.
\item For any $h>0$, every critical point of $P_\Ome (\cdot ,h)$ belongs to $Uf (\Ome ) \sm \pd ( \conv \Ome )$.
\item For any $t>0$, every critical point of $W_\Ome (\cdot ,t)$ belongs to $Uf (\Ome ) \sm \pd ( \conv \Ome )$.
\end{enumerate}
\end{prop}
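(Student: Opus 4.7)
The plan is to apply Alexandrov's reflection principle \cite{GNN, Ser} uniformly to all five assertions. First I would recast each potential (on the set where its gradient exists) as a positive multiple of $U(x)=\int_D K(\lvert x-y\rvert)\,dy$ with $K:(0,\infty)\to\R$ strictly decreasing, where the domain $D$ depends on the case: $D=\Ome$ in cases (1), (4), (5) and in cases (2), (3) when $x\in\Ome^c$; $D=\Ome\sm B_\ep(x)$ for case (2) at an interior point (Lemma \ref{diffV01}); and $D=\Ome^c$ for case (3) at an interior point (Remark \ref{diffV}(3)).

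The core step asserts that if $x\notin Uf(\Ome)$, then $\nabla U(x)\cdot v\neq 0$ for any $v$ with $b:=x\cdot v>l(v)$. Writing
\[
\nabla U(x)\cdot v = \int_D K'(\lvert x-y\rvert)\,\frac{(x-y)\cdot v}{\lvert x-y\rvert}\,dy
\]
and splitting $D$ by the hyperplane $H=\{z:z\cdot v=b\}$ into $D^{\pm}$, I would use the change of variable $y\mapsto\Refl_{v,b}(y)$, which preserves $\lvert x-y\rvert$ and reverses the sign of $(x-y)\cdot v$ because $x\in H$. Since $b>l(v)$, the definition of $l(v)$ gives $\Refl_{v,b}(\Ome\cap H^+)\subset\Ome\cap H^-$, so the contributions from $D^+$ cancel against those from $\Refl_{v,b}(D^+)\subset D^-$, leaving a residual integral over $D^-\sm\Refl_{v,b}(D^+)$ in which $K'<0$ and $(x-y)\cdot v>0$, hence of definite sign. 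To ensure that this residual set has positive Lebesgue measure, I would argue that if it were null then $\Ome$ would be $\Refl_{v,b}$-symmetric, but picking $y_0\in\Ome$ with $y_0\cdot v$ maximal shows that any reflection symmetry $\Refl_{v,b_0}$ of $\Ome$ must satisfy $b_0\leq l(v)<b$, a contradiction. The excised ball in case (2) and the complement domain in case (3) are themselves $\Refl_{v,b}$-invariant, so the computation goes through (for (3) one also checks that $\Refl_{v,b}(\Ome^c\cap H^-)\subset\Ome^c\cap H^+$ follows formally from $\Refl_{v,b}(\Ome\cap H^+)\subset\Ome\cap H^-$).

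With the core step in hand, (4), (5) and the $Uf(\Ome)$-part of (1) are immediate because $P_\Ome(\cdot,h)$, $W_\Ome(\cdot,t)$ and $V_\Ome^{(\al)}$ for $\al>1$ are $C^1$ on all of $\R^m$ (Remark \ref{diffV}(1)). To exclude $\pd(\conv\Ome)$ in (1), (4) and (5), I would note that at $x\in\pd(\conv\Ome)$ there is a supporting hyperplane with outward normal $v$; then $\Ome\cap H^+$ has measure zero and the entire integral reduces to one over $\Ome\cap H^-$ with $(x-y)\cdot v>0$ and $K'<0$, giving $\nabla U(x)\cdot v<0$. For (2) the inclusion into $(\conv\Ome)^\circ\cup\pd\Ome$ follows from the supporting-hyperplane argument applied to exterior critical points via Remark \ref{diffV}(2), while under the piecewise $C^1$ hypothesis Lemma \ref{diffV01} makes the core reflection step available at interior critical points and yields the finer inclusion $Uf(\Ome)\cup\pd\Ome$. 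For (3), Remark \ref{diffV}(3) provides the gradient on $\c\Ome$ and the same arguments apply on both sides of $\pd\Ome$, which is absorbed into the statement by the $\cup\pd\Ome$ term (consistent with Lemma \ref{O1lem2.13}, where $\nabla V_\Ome^{(\al)}$ generally fails to exist).

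The main obstacle I anticipate is verifying that the residual set in the reflection step has positive measure, which rests on the auxiliary fact $b_0\leq l(v)$ for any reflection symmetry $b_0$ of $\Ome$; the remainder is careful case-splitting over the differentiability domains prescribed by Lemma \ref{diffV01} and Remark \ref{diffV}.
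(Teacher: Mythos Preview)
Your proposal is correct and follows essentially the same approach as the paper: both use Alexandrov's reflection principle, pick a direction $v$ with $b=x\cdot v>l(v)$, cancel the contributions from $\Ome^+_{v,b}$ against those from $\Refl_{v,b}(\Ome^+_{v,b})\subset\Ome$, and observe that the surviving integrand has a definite sign; the boundary-of-convex-hull exclusion is handled identically via a supporting hyperplane, and the case splits for $0\le\al\le1$ and $\al<0$ (punctured domain via Lemma~\ref{diffV01}, complement domain via Remark~\ref{diffV}(3)) match the paper's. Your explicit check that $\Refl_{v,b}(\Ome^c\cap H^-)\subset\Ome^c$ and your concern about the residual set having positive measure are points the paper leaves implicit; your sketch via the maximal point $y_0$ is the right idea (if $\Ome$ were symmetric at level $b$ with $b>l(v)$, then folding at any $b'\in(l(v),b)$ would send $y_0$ below the minimum of $y\cdot v$ on $\Ome$, contradicting $\Refl_{v,b'}(\Ome^+_{v,b'})\subset\Ome$), though you should be a little careful passing from ``null residual'' to ``symmetric'' for a general body.
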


\begin{proof}
(1) Let $x$ be an exterior point of $Uf(\Ome )$. There is a direction $v \in S^{m-1}$ such that $b:= x\cdot v > l(v)$. From the definition of the maximal folding function $l$, $\Refl_{v,b} (\Ome^+_{v,b})$ is contained in $\Ome$. From the first assertion in Remark \ref{diffV}, radial symmetry of the kernel of $V_\Ome^{(\al)}$ implies
\[
\frac{\pd V_\Ome^{(\al )}}{\pd v} (x)=
\begin{cases}
\ds (\al -m) \sign (m-\al ) \int_{\Ome \sm \( \Ome^+_{v,b} \cup \Refl_{v,b} \( \Ome^+_{v,b} \) \)} \lvert x-y \rvert^{\al -m-2} (x-y) \cdot v dy &( \al \neq m),\\
\ds -\int_{\Ome \sm \( \Ome^+_{v,b} \cup \Refl_{v,b} \( \Ome^+_{v,b} \) \)} \frac{(x-y)\cdot v}{\lvert x-y \rvert^2} dy &(\al =m ) .
\end{cases}
\]
Since $(x-y) \cdot v >0$ for any $y$ in the region of the above integral, the derivative of $V_\Ome^{(\al )}$ in $v$ does not vanish. Thus every critical point of $V_\Ome^{(\al)}$ belongs to $Uf(\Ome )$. 

Let $x$ be a point on the boundary of the convex hull of $\Ome$. There is a direction $v \in S^{m-1}$ such that the convex hull of $\Ome$ is contained in the half space $v^\perp_- +x = \{ z+x  \vert  z \cdot v \leq 0 \}$. Since $(x-y) \cdot v >0$ for almost every $y \in \Ome$, we conclude that $x$ is not a critical point of $V_\Ome^{(\al)}$.

(2) Let $x$ be a point not in $(\conv \Ome )^\circ$ or on $\pd \Ome$. We decompose $\R^m$ into
\[
\R^m = \( \conv \Ome \)^c \cup \( \pd \( \conv \Ome \) \sm \pd \Ome \) \cup \( \conv \Ome \)^\circ \cup \pd \Ome .
\]
Using the second assertion in Remark \ref{diffV}, the same argument as in the first assertion implies the non-criticality of $V_\Ome^{(\al )}$ at $x$.

Suppose that $\Ome$ has a piecewise $C^1$ boundary. Let $x$ be a point not in $Uf(\Ome )$ or on $\pd \Ome$. We decompose $\R^m$ into
\[
\R^m = \( Uf(\Ome )^c \cap \c{\Ome} \) \cup \( Uf (\Ome )^c \cap \Ome^c \) \cup Uf (\Ome ) \cup \pd \Ome .
\]
Using the second or fourth assertion in Remark \ref{diffV}, the same argument as in the first assertion implies the conclusion.

(3) Using the second and third assertions in Remark \ref{diffV}, the same argument as in the first assertion implies the conclusion.

(4) and (5) The proofs are same as the first one.  
\end{proof}

\begin{rem}
{\rm 
We refer to \cite{Her2} for the study on the location of a radial center: If $\Ome$ is a smooth convex body in $\R^m$, then the (unique) $r^{1-m}$-center is an interior point of $\Ome$. From Lemma \ref{diffVbd}, Proposition \ref{critlocation} includes her result.
}
\end{rem}

Investigating the concavity of $V_\Ome^{(\al )}$, for the uniqueness of a critical point of $V_\Ome^{(\al )}$, the following results are known:

\begin{lem}[{\cite[Lemma 3.10]{O1}}]\label{O1lem3.10}
Let $\Ome$ be a convex body in $\R^m$. If $\al \leq 1$, then the potential $V_\Ome^{(\al )}$ becomes strictly concave on the interior of $\Ome$. In particular, in this case, $V_\Ome^{(\al )}$ has a unique critical point.
\end{lem}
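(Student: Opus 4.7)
The plan is to pass to polar coordinates centered at $x \in \Ome^\circ$, thereby reducing strict concavity of $V_\Ome^{(\al)}$ to an elementary statement about the radial function of $\Ome$ from $x$, together with Jensen--type composition monotonicity.

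For fixed $x \in \Ome^\circ$, convexity of $\Ome$ makes it star-shaped with respect to $x$, so the radial function $\rho_{\Ome - x}(u) = \max\{ \rho \geq 0 : x + \rho u \in \Ome\}$ is positive and continuous on $S^{m-1}$. Substituting $y = x + r u$ in the integrals of Definition \ref{O1def2.1} (rewritten via Proposition \ref{O1prop2.5} when $\al \leq 0$), the Hadamard renormalization is exactly what cancels the lower-limit divergence at $r = \ep$, producing the uniform expression
\[
V_\Ome^{(\al)}(x) = \int_{S^{m-1}} \psi_\al \bigl( \rho_{\Ome - x}(u) \bigr) d\sigma(u) + C,
\]
where $C$ is a constant independent of $x$, $\psi_\al(\rho) = \rho^\al / \al$ when $\al \neq 0, m$, $\psi_0(\rho) = \log \rho$, and $\psi_1(\rho) = \rho - \rho \log \rho$ in the exceptional case $\al = m = 1$. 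A direct computation of $\psi_\al''$ shows that each $\psi_\al$ is concave on $(0, \infty)$ for $\al \leq 1$, and strictly so except in the borderline case $\al = 1$ with $m \geq 2$. Independently, convexity of $\Ome$ yields that $x \mapsto \rho_{\Ome - x}(u)$ is concave on $\Ome^\circ$ for each fixed $u$: for $x_t = (1-t) x_0 + t x_1$ with $x_0, x_1 \in \Ome^\circ$, the point $x_t + \bigl( (1-t)\rho_{\Ome - x_0}(u) + t \rho_{\Ome - x_1}(u) \bigr) u$ is a convex combination of two points of $\Ome$ and hence lies in $\Ome$.

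Composing then gives concavity of $V_\Ome^{(\al)}$; strictness along $[x_0, x_1]$ follows, for $\al < 1$, as soon as the radial functions $\rho_{\Ome - x_0}(\cdot)$ and $\rho_{\Ome - x_1}(\cdot)$ disagree on a set of positive $\sigma$-measure, which holds because otherwise continuity in $u$ would make them coincide everywhere, forcing $\pd \Ome$ to be invariant under the nontrivial translation $x_1 - x_0$ and contradicting boundedness of $\Ome$. The main obstacle is the borderline case $\al = 1$, $m \geq 2$, where $\psi_1$ is only affine and strictness must instead be extracted from strict concavity of $\rho_{\Ome - \cdot}(u)$ itself on $[x_0, x_1]$ for a set of $u$ of positive measure; one must rule out the degenerate alternative in which $\pd \Ome$ carries a whole ruled family of segments joining radial endpoints at $x_0$ and $x_1$. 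Once strict concavity is established, uniqueness of the critical point is immediate: at most one exists on the convex open set $\Ome^\circ$, Theorem \ref{O1thm3.5} furnishes existence, and Lemma \ref{O1lem2.13} (resp.\ Lemma \ref{diffVbd}) excludes critical points on $\pd \Ome$, since $V_\Ome^{(\al)}$ diverges to $-\infty$ on $\pd \Ome$ when $\al \leq 0$ and its inward directional derivative diverges to $-\infty$ on $\pd \Ome$ when $0 < \al \leq 1$.
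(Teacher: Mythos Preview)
The paper does not prove this lemma; it is quoted from \cite[Lemma~3.10]{O1} without argument, so there is no in-paper proof to compare against. Your polar-coordinate reduction
\[
V_\Ome^{(\al)}(x)=\int_{S^{m-1}}\psi_\al\bigl(\rho_{\Ome-x}(u)\bigr)\,d\sigma(u)+C
\]
is correct and is precisely the radial-function viewpoint of Moszy\'nska and O'Hara recalled in the introduction. For $\al<1$ (and for $\al=m=1$) your argument is complete: $\psi_\al$ is strictly concave and increasing, $x\mapsto\rho_{\Ome-x}(u)$ is concave for each fixed $u$ by convexity of $\Ome$, and your translation argument correctly shows that $\rho_{\Ome-x_0}$ and $\rho_{\Ome-x_1}$ must differ on a set of positive $\sigma$-measure whenever $x_0\neq x_1$, since $\rho_{\Ome-x_0}\equiv\rho_{\Ome-x_1}$ would force $\Ome=\Ome+(x_1-x_0)$, contradicting boundedness. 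The final paragraph deducing uniqueness of the critical point from Theorem~\ref{O1thm3.5}, Lemma~\ref{O1lem2.13} and Lemma~\ref{diffVbd} is also correct.

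The case $\al=1$, $m\geq 2$ is a genuine gap. You correctly diagnose that $\psi_1(\rho)=\rho$ is affine and that strictness must instead come from non-affinity of $t\mapsto\rho_{\Ome-x_t}(u)$ on $[0,1]$ for a positive-measure set of $u$; but the phrase ``one must rule out the degenerate alternative'' names the obstacle without removing it. Equality for a fixed $u$ and all $t\in[0,1]$ forces the segment $[x_0+\rho_0(u)u,\ x_1+\rho_1(u)u]$ to lie in $\pd\Ome$, and what is needed is a proof that this cannot occur for $\sigma$-almost every $u$. One possible route is to pass to the two-dimensional slice of $\Ome$ through $x_0$, $x_1$ and a direction $u$ not parallel to $x_1-x_0$: affinity of both $t\mapsto\rho_t(u)$ and $t\mapsto\rho_t(-u)$ makes the parallel-chord length $\rho_t(u)+\rho_t(-u)$ affine in $t$, which for a planar convex body forces the corresponding boundary arcs to be straight segments, and pushing this over all $u$ eventually conflicts with boundedness of $\Ome$. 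But some such argument must actually be written out; as the proposal stands, the proof is incomplete precisely at $\al=1$.
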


\begin{lem}[{\cite[Lemma 3.14]{O1}}]\label{O1lem3.14}
Let $\Ome$ be a body in $\R^m$. If $m=1$ and $\al >2$, or if $m \geq 2$ and $\al \geq m+1$, then the potential $V_\Ome^{(\al )}$ becomes strictly concave on $\R^m$. In particular, in this case, $V_\Ome^{(\al )}$ has a unique critical point.
\end{lem}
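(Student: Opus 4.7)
The plan is to show that the Hessian of $V_\Ome^{(\al)}$ is negative definite at every $x \in \R^m$; once this is established, strict concavity and the uniqueness of a critical point are immediate (existence coming from Theorem \ref{O1thm3.5}). The hypotheses put us in the regime $\al > m$, so $V_\Ome^{(\al)}(x) = -\int_\Ome |x-y|^{\al -m} dy$ and there is no logarithmic correction to handle separately.

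The main step is a pointwise computation. For $z \neq 0$, differentiating the kernel $\phi(z) := -|z|^{\al -m}$ twice gives
\[
u^T(\nabla^2\phi)(z)u = -(\al -m)|z|^{\al -m-2}\bigl[(\al -m-2)(u \cdot z)^2/|z|^2 + |u|^2\bigr].
\]
Writing the bracketed factor as $|u|^2\bigl[(\al -m-2)\cos^2\theta + 1\bigr]$, where $\theta$ is the angle between $u$ and $z$, one checks that it is non-negative for every $\theta$ precisely when $\al \geq m+1$, and strictly positive for every $\theta$ whenever $\al > m+1$. At the borderline $\al = m+1$ the factor vanishes exactly when $u \parallel z$. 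The one-dimensional case $m=1$, $\al >2$ reduces to the inequality $\phi''(z) = -(\al -1)(\al -2)|z|^{\al -3}<0$.

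Next I would justify the identity $\nabla^2 V_\Ome^{(\al)}(x) = \int_\Ome (\nabla^2\phi)(x-y)\,dy$ by differentiation under the integral sign; this is permitted because $|x-y|^{\al -m-2}$ is integrable near $y=x$ whenever $\al > 2$, a condition implied by either hypothesis. For any non-zero $u$ and any $x \in \R^m$ we then have
\[
u^T \bigl(\nabla^2 V_\Ome^{(\al)}(x)\bigr) u = \int_\Ome u^T(\nabla^2\phi)(x-y)u \, dy \leq 0.
\]
When $\al > m+1$ (and in the one-dimensional case) the integrand is strictly negative for every $y \neq x$, so the integral is strictly negative since $\Ome$ has positive measure. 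In the endpoint case $\al = m+1$ with $m \geq 2$, the integrand vanishes only on the line through $x$ with direction $u$, a set of $m$-dimensional Lebesgue measure zero; since $\Ome$ has non-empty interior, the integrand is strictly negative on a set of positive measure, and strict negativity of the integral is recovered.

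The only subtle point is the endpoint $\al = m+1$, where the pointwise Hessian of the kernel is merely negative semi-definite. The key observation is that the degenerate direction of $(\nabla^2\phi)(x-y)$ depends on $y$, so integration against Lebesgue measure over a body automatically destroys the degeneracy. Strict concavity of $V_\Ome^{(\al)}$ on $\R^m$ then yields the uniqueness of the critical point.
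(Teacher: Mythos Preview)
The paper does not supply its own proof of this lemma; it is quoted as \cite[Lemma 3.14]{O1} and used as a black box, so there is no argument here to compare yours against. Your Hessian computation is correct and is the natural route: the pointwise formula for $u^T(\nabla^2\phi)(z)u$, the integrability check $\alpha>2$ for differentiating under the integral, and the observation that at the borderline $\alpha=m+1$ the degenerate direction of the kernel Hessian varies with $y$ so that integration over a set of positive $m$-dimensional measure kills the degeneracy, are all sound.
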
 

Furthermore, the limit point of an $r^{\al -m}$-center was studied.

\begin{thm}[{\cite[Theorem 3.23]{O1}}]\label{O1thm3.23}
Let $\Ome$ be a body in $\R^m$.
\begin{enumerate}[$(1)$]
\item The $r^{\al -m}$-center converges to the circumcenter of $\Ome$ as $\al$ goes to $+\infty$.
\item The limit of any convergent sequence of $r^{\al_j -m}$-centers with $\al_j \to -\infty$ as $j \to +\infty$ is an incenter of $\Ome$.
\end{enumerate}
\end{thm}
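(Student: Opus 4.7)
The plan is to reinterpret the $r^{\al -m}$-center, for $\al$ of large absolute value, as the minimizer of an $L^p$-type functional (with $p \to +\infty$), and then to use the convergence of the $L^p$-norm to the $L^\infty$-norm to identify the limit with the circumcenter (for $\al \to +\infty$) or with an incenter (for $\al \to -\infty$).

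For part (1), I first fix $\al > m$, so that Definition \ref{O1def2.1} reads $V_\Ome^{(\al )}(x) = - \int_\Ome \lvert x-y \rvert^{\al -m} dy$. Hence an $r^{\al -m}$-center of $\Ome$ is a minimizer of
\[
G_p (x) := \( \frac{1}{\Vol (\Ome )} \int_\Ome \lvert x-y \rvert^p dy \)^{1/p} ,\quad p= \al -m > 0 .
\]
Jensen's inequality for the probability measure $dy/\Vol (\Ome )$ shows that $p \mapsto G_p (x)$ is non-decreasing, with pointwise limit $R(x) := \max_{y \in \Ome} \lvert x-y \rvert$. Since $G_p$ and $R$ are continuous on the compact set $\conv \Ome$, Dini's theorem upgrades this to uniform convergence. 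By Proposition \ref{critlocation}(1) every $r^{\al -m}$-center lies in $\conv \Ome$, so any sequence $\al_j \to +\infty$ admits a subsequence $c_{\al_{j_k}} \to c_\infty \in \conv \Ome$. Applying the minimality inequality $G_{p_{j_k}}(c_{\al_{j_k}}) \leq G_{p_{j_k}}(c^*)$ against the circumcenter $c^*$ and passing to the limit yields $R(c_\infty ) \leq R(c^*)$; a parallelogram-identity argument shows $R$ has a unique minimizer, so $c_\infty = c^*$. Since the uniqueness of $c_\al$ for large $\al$ is furnished by Lemma \ref{O1lem3.14}, the whole family $c_\al$ converges.

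For part (2), I would use Proposition \ref{O1prop2.5}(2) to rewrite, for $\al < 0$ and $x \in \c{\Ome}$,
\[
V_\Ome^{(\al )}(x) = -\int_{\Ome^c} \lvert x-y \rvert^{\al -m} dy ,
\]
so that Definition \ref{O1def3.1}(2) says an $r^{\al -m}$-center is a point of $\c{\Ome}$ minimizing
\[
H_p (x) := \( \int_{\Ome^c} \lvert x-y \rvert^{-p} dy \)^{1/p} ,\quad p= m-\al .
\]
The key analytic ingredient is the convergence $H_p (x) \to 1/\dist (x, \Ome^c)$ as $p \to +\infty$, uniformly on compact subsets of $\c{\Ome}$. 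Assuming this, if $c_{\al_j} \to c_{-\infty}$ with $\al_j \to -\infty$ and $c^*$ is any incenter, the inequality $H_{p_j}(c_{\al_j}) \leq H_{p_j}(c^*)$ passes to the limit to give $\dist (c_{-\infty} , \Ome^c) \geq \dist (c^* , \Ome^c)$; since $c^*$ attains the maximum of $\dist (\cdot , \Ome^c)$, $c_{-\infty}$ must be an incenter as well.

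The main obstacle is the uniform $L^p \to L^\infty$ convergence used in part (2): since $\Ome^c$ has infinite Lebesgue measure, the normalized Jensen monotonicity that drives part (1) is not directly available. I would circumvent this by splitting
\[
\int_{\Ome^c} \lvert x-y \rvert^{-p} dy = \int_{\Ome^c \cap B_R(x)} \lvert x-y \rvert^{-p} dy + \int_{\Ome^c \sm B_R(x)} \lvert x-y \rvert^{-p} dy ,
\]
choosing $R$ so large that the closest point of $\Ome^c$ to $x$ lies in $B_R(x)$. The Dini argument from part (1) applies to the near part, while the far part is dominated by a constant times $R^{m-p}/(p-m)$ and contributes nothing after extracting the $p$-th root and sending $p \to \infty$. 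A final small point is that the limit $c_{-\infty}$ must be bounded away from $\pd \Ome$, which is automatic a posteriori from $\dist (c_{-\infty} , \Ome^c ) \geq \dist (c^* , \Ome^c) > 0$.
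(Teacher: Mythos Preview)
The paper does not prove this theorem. It is quoted in the preliminaries (Section~2.4) as \cite[Theorem~3.23]{O1} and is used only as background input to Corollary~\ref{determinationV}; no argument for it appears anywhere in the text. So there is no ``paper's own proof'' to compare your proposal against.

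That said, a quick assessment of your plan on its own merits: part~(1) is sound. The reduction to minimizing the $L^p$-average $G_p$ over the probability measure $\Vol(\Ome)^{-1}\,dy$, the Dini argument on $\conv\Ome$, and the parallelogram-identity uniqueness of the circumcenter are all standard and correct; Lemma~\ref{O1lem3.14} supplies the eventual uniqueness of $c_\al$ needed to upgrade subsequential to full convergence.

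Part~(2), however, has a circularity. You write that ``the limit $c_{-\infty}$ must be bounded away from $\pd\Ome$, which is automatic a posteriori from $\dist(c_{-\infty},\Ome^c)\geq\dist(c^*,\Ome^c)>0$.'' But that inequality is precisely what you obtain \emph{after} passing to the limit in $H_{p_j}(c_{\al_j})\leq H_{p_j}(c^*)$, and passing to the limit on the left-hand side already requires control of $H_{p_j}$ near $c_{-\infty}$, i.e.\ that the $c_{\al_j}$ stay in a compact subset of $\c\Ome$. Without an independent argument ruling out $c_{\al_j}\to\pd\Ome$, the step is not justified. One fix is to compare the $p$-th powers directly: bound $\int_{\Ome^c}|c^*-y|^{\al_j-m}\,dy$ above by $\sigma(S^{m-1})(d^*)^{\al_j}/(-\al_j)$ via the shell $\{|c^*-y|\geq d^*\}$, and bound $\int_{\Ome^c}|c_{\al_j}-y|^{\al_j-m}\,dy$ below using a \emph{fixed} exterior ball $B_\de(z_0)\subset\Ome^c$ together with, say, an exterior cone or uniform density hypothesis near $\pd\Ome$ to force $d_j:=\dist(c_{\al_j},\Ome^c)$ to stay bounded away from $0$. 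For a general body with no boundary regularity this lower bound is the delicate point, and your splitting into near/far parts does not by itself supply it; you should either add a mild hypothesis or consult O'Hara's original argument in \cite{O1} for how this is handled.
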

\subsection{Power concavity of a function}

In this subsection, we prepare terminologies on the power concavity of a function from \cite{BL, Ken}. The power concavity of a function plays an important role for the study on the uniqueness of a critical point of $P_\Ome (\cdot ,h)$ when $\Ome$ is a convex body.

\begin{defi}[{\cite[Introduction and Property1]{Ken}}]
{\rm
Let $\al \in \R \cup \{ \pm \infty \}$ and $\f$ be a non-negative function defined on $\R^m$. The function $\f$ is {\it $\al$-concave} on $\R^m$ if 
\[
\f \( (1-\la ) x+ \la y\) \geq \( (1-\la ) \f (x)^\al + \la \f (y)^\al \)^{1/\al}
\]
for any $x$, $y \in \R^m$ and $0\leq \la \leq 1$. The function $\f$ is {\it strictly $\al$-concave} on $\R^m$ if the above inequality strictly holds for any distinct $x$, $y \in \R^m$ and $0<\la <1$.

We understand
\[
\( (1-\la ) \f (x)^\al + \la \f (y)^\al \)^{1/\al} =
\begin{cases}
0 &\( \f (x) \f (y) =0 \) ,\\
\max \left\{ \f (x) , \ \f (y) \right\} &( \al =+\infty ),\\
\f (x)^{1-\la} \f (y)^{\la} &( \al =0 ),\\
\min \left\{ \f (x) , \ \f (y) \right\} &( \al =-\infty ) .
\end{cases}
\]
The power concavity for $\al=0$ is also called {\it log-concavity}.
}
\end{defi}

\begin{thm}[{\cite[Theorem 3.3]{BL}}]\label{blthm}
Let $f$ and $g$ be non-negative measurable functions defined on $\R^m$. Suppose that both  $f$ and $g$ have positive $L^1$-norms. Let $\al \geq -1/m$ and $\ga = \al / (1+m\al )$. Then, we have
\[
\int_{\R^m} \sup_{(1-\la )y_1 + \la y_2 =y} \( (1-\la ) f\( y_1 \)^\al + \la g\( y_2 \)^\al \)^{1/\al} dy
\geq \( (1-\la ) \| f \|_1^\ga + \la \| g\|_1^\ga \)^{1/\ga} .
\]
\end{thm}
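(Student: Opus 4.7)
The plan is to prove the inequality by induction on the dimension $m$, with the one-dimensional case as the base and a Fubini-type slicing argument for the inductive step. The key design feature of the statement — the relationship $\ga = \al/(1+m\al)$, equivalently $1/\ga - 1/\al = m$ — is tailor-made so that the one-dimensional exponent shift per dimension adds up correctly. Since only the case $\al \geq -1/m$ is allowed, the quantities $f^\al$ and $g^\al$ can be handled by a common formalism (treating the limits $\al = 0,\pm\infty$ as conventions), so I would first restrict attention to finite $\al > -1/m$ with $f,g$ positive and smooth on their supports, and obtain the general case by a standard approximation.

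For the base case $m=1$, I would parametrize via the distribution functions. Setting $A = \|f\|_1$ and $B = \|g\|_1$, define $u,v: (0,1) \to \R$ by
\[
\int_{-\infty}^{u(s)} f = sA, \qquad \int_{-\infty}^{v(s)} g = sB,
\]
so that $u'(s) = A/f(u(s))$ and $v'(s) = B/g(v(s))$. Let $w(s) = (1-\la) u(s) + \la v(s)$; then $w'(s) > 0$, so $w$ is an increasing change of variables. If $H(y)$ denotes the supremal convolution on the left-hand side, then $H(w(s)) \geq ((1-\la) f(u(s))^\al + \la g(v(s))^\al)^{1/\al}$ by construction, and by the change of variables
\[
\int_\R H(y)\,dy \geq \int_0^1 \bigl((1-\la) f(u(s))^\al + \la g(v(s))^\al\bigr)^{1/\al} \bigl((1-\la)A/f(u(s)) + \la B/g(v(s))\bigr)\,ds.
\]
Writing $p = f(u(s))$, $q = g(v(s))$, the integrand takes the form
$(( 1-\la) p^\al + \la q^\al)^{1/\al}\,((1-\la)A/p + \la B/q)$, and the desired inequality follows if I can prove pointwise that this expression is at least $((1-\la)A^\ga + \la B^\ga)^{1/\ga}$ whenever $\ga = \al/(1+\al)$ (the one-dimensional value of $\ga$). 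This is the heart of the matter: it is a two-variable algebraic inequality, which reduces, after the substitution $p = A\xi$, $q = B\eta$, to an inequality for the product of an $\al$-power mean of $(\xi,\eta)$ and a $(-1)$-power mean of $(\xi,\eta)$. Up to homogenization this is exactly the Hölder-type bound that defines $\ga$ as the "conjugate" exponent; I would establish it by computing the minimum over $(\xi,\eta)$ via Lagrange multipliers, or, more cleanly, by applying the weighted arithmetic-geometric-harmonic inequality to identify the critical ratio $\xi/\eta$.

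For the inductive step, assume the inequality in dimension $m-1$ with exponent $\al$ producing the intermediate exponent $\be := \al/(1+(m-1)\al)$. Given $y = (y', y_m) \in \R^{m-1} \times \R$, slice the supremal convolution along the last coordinate: for each choice $y_m = (1-\la)s + \la t$, the supremum over $y_1', y_2'$ with $(1-\la) y_1' + \la y_2' = y'$ is, by the inductive hypothesis applied to the slices $f(\cdot, s)$ and $g(\cdot, t)$, at least
\[
\bigl((1-\la)\|f(\cdot,s)\|_1^\be + \la \|g(\cdot,t)\|_1^\be\bigr)^{1/\be}
\]
after integrating over $y' \in \R^{m-1}$. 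Thus, writing $F(s) = \|f(\cdot,s)\|_1$ and $G(t) = \|g(\cdot,t)\|_1$, the desired bound reduces to the one-dimensional BBL inequality applied to $F$ and $G$ with exponent $\be$, whose conclusion is exactly $((1-\la)\|F\|_1^{\be/(1+\be)} + \la\|G\|_1^{\be/(1+\be)})^{(1+\be)/\be}$. A direct computation shows $\be/(1+\be) = \al/(1+m\al) = \ga$, and $\|F\|_1 = \|f\|_1$, $\|G\|_1 = \|g\|_1$ by Fubini, closing the induction.

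The main obstacle is the pointwise algebraic inequality in the base case: the rest is bookkeeping with Fubini, change of variables, and the exponent arithmetic for $\ga$. Everything else — measurability of the supremal convolution, the treatment of the boundary values $\al \in \{0, -1/m, +\infty\}$, and approximation of general non-negative measurable $f,g$ by smooth positive functions — can be handled by standard monotone convergence and limiting arguments once the smooth case is established.
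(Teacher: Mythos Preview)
The paper does not prove this theorem: it is quoted verbatim from \cite[Theorem~3.3]{BL} as a tool (invoked in the proof of Theorem~\ref{power-concavity}) and no argument is given here. So there is no ``paper's own proof'' to compare against; what you have written is essentially the original Brascamp--Lieb induction-on-dimension proof of the Borell--Brascamp--Lieb inequality, and the architecture (transport parametrization for $m=1$, Fubini slicing for the inductive step, with the exponent bookkeeping $1/\ga-1/\al=m$) is correct.

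One small inaccuracy worth fixing before you flesh it out: after your substitution $p=A\xi$, $q=B\eta$, the Jacobian factor $(1-\la)A/p+\la B/q=(1-\la)/\xi+\la/\eta$ is the arithmetic mean of $1/\xi,1/\eta$, i.e.\ the \emph{reciprocal} of the $(-1)$-power mean of $(\xi,\eta)$, not the $(-1)$-power mean itself. The clean way to state the pointwise lemma you need is the product-of-means inequality
\[
M_\al(a,b;\la)\,M_1(c,d;\la)\ \ge\ M_{\al/(1+\al)}(ac,bd;\la),
\]
valid for $\al\ge -1$ (here $M_\be(x,y;\la)=((1-\la)x^\be+\la y^\be)^{1/\be}$); applying it with $(a,b)=(p,q)$ and $(c,d)=(A/p,B/q)$ gives the one-dimensional case directly and explains why $\al\ge -1/m$ is the threshold in dimension~$m$.
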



\begin{prop}[{\cite[Property 4]{Ken}}]\label{Kenprop4}
Let $\al$ be a real constant, and $\f$ a positive $C^2$ function defined on $\R^m$. The function $\f$ is (strictly) $\al$-concave if and only if the inequality
\[
\f (x) \frac{\pd^2 \f}{\pd v^2}(x) +(\al -1) \frac{\pd \f}{\pd v}(x)^2 \leq 0
\]
(resp. strictly) holds for any direction $v \in S^{m-1}$.
\end{prop}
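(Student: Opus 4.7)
The plan is to reduce to dimension one by restricting $\f$ to lines, and then to recast $\al$-concavity as ordinary concavity or convexity of a power transform.

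First, I would observe that the inequality defining $\al$-concavity involves only convex combinations $(1-\la)x + \la y$, which lie on the segment joining $x$ and $y$. Hence $\f$ is (strictly) $\al$-concave on $\R^m$ if and only if, for every $x \in \R^m$ and every $v \in S^{m-1}$, the one-variable function $\psi (t) := \f (x+tv)$ is (strictly) $\al$-concave on $\R$. Moreover $\psi'(0) = (\pd \f / \pd v)(x)$ and $\psi''(0) = (\pd^2 \f /\pd v^2)(x)$, so it suffices to establish the equivalence in dimension one for an arbitrary positive $\psi \in C^2 (\R)$.

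Next, for $\al \neq 0$ I would introduce $G := \psi^\al$. Because $t \mapsto t^{1/\al}$ is strictly increasing for $\al >0$ and strictly decreasing for $\al <0$, the $\al$-concavity of $\psi$ is equivalent to the concavity of $G$ when $\al >0$ and to the convexity of $G$ when $\al <0$. For $\al = 0$, I would set $G := \log \psi$; then $0$-concavity of $\psi$ is equivalent to concavity of $G$. Since $\psi >0$, in every case $G \in C^2 (\R)$, so by the classical one-variable criterion the problem reduces to controlling the sign of $G''$. A direct computation gives
\[
G'' = \al \, \psi^{\al -2} \bigl[ \psi \psi'' + (\al -1)(\psi')^2 \bigr] \quad (\al \neq 0), \qquad G'' = \psi^{-2} \bigl[ \psi \psi'' - (\psi')^2 \bigr] \quad (\al =0).
\]
The prefactor $\psi^{\al -2}$ (resp.\ $\psi^{-2}$) is strictly positive, so tracking the sign of $\al$ in the first expression shows that in all three cases the required sign of $G''$ (namely $\le 0$ when $\al \ge 0$ and $\ge 0$ when $\al <0$) is equivalent to the single inequality $\psi \psi'' + (\al -1) (\psi')^2 \le 0$. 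Evaluated at an arbitrary point and translated back to $\f$, this gives the asserted condition in the direction $v$, proving the non-strict equivalence.

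The main obstacle is the strict version. Strict concavity of a $C^2$ function on $\R$ does not in general force a strictly negative second derivative everywhere (consider $-t^4$ at the origin), so the strict equivalence must be read in Kennington's convention that strict $\al$-concavity corresponds to pointwise strict concavity/convexity of $G$; under this convention the computation above gives the strict inequality with $\le$ replaced by $<$ immediately. Apart from this delicate interpretation, the entire argument is a routine application of the chain rule.
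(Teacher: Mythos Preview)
The paper does not supply its own proof of this proposition: it is simply quoted from Kennington \cite[Property~4]{Ken} and used as a black box. So there is nothing in the paper to compare your argument against.

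Your argument is the standard one and is correct for the non-strict equivalence. The reduction to lines is immediate, and the chain-rule computation of $G'' = \al\,\psi^{\al-2}[\psi\psi'' + (\al-1)(\psi')^2]$ (with the obvious modification at $\al=0$) together with the sign of the prefactor $\al\,\psi^{\al-2}$ cleanly yields the claimed inequality in every case.

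You are also right to flag the strict direction. As stated, the ``only if'' part of the strict equivalence is false without further convention: a positive $C^2$ function can be strictly $\al$-concave while the displayed quantity vanishes at isolated points (your example $-t^4$, suitably transformed, makes this transparent). The ``if'' direction, however, is unproblematic: if $\psi\psi'' + (\al-1)(\psi')^2 < 0$ everywhere then $G$ is strictly concave (resp.\ convex), hence $\psi$ is strictly $\al$-concave. Since the paper only ever \emph{uses} this proposition in the ``if'' direction---in Corollary~\ref{uniqueness} and Corollary~\ref{smoothnessWP} one applies strict $\al$-concavity at a critical point, where $\pd\f/\pd v = 0$ and the inequality reduces to the strict negativity of the second directional derivative---your observation does not affect any downstream argument in the paper.
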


\begin{cor}\label{uniqueness}
Let $\al$ be a real constant, and $\f$ a positive $C^2$ function. If $\f$ is strictly $\al$-concave on $\R^m$, then $\f$ has at most one critical point.
\end{cor}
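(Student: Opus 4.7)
The plan is to reduce the statement to the classical fact that a strictly concave $C^2$ function on $\R^m$ has at most one critical point. First I introduce the monotone smooth transformation $\Psi:(0,\infty)\to\R$ given by $\Psi(t)=t^\al/\al$ for $\al\neq 0$ and $\Psi(t)=\log t$ for $\al=0$, and set $\psi=\Psi\circ\f$. Because $\Psi'(t)=t^{\al-1}>0$, we have $\nabla\psi(x)=\f(x)^{\al-1}\nabla\f(x)$, so the critical points of $\psi$ coincide with those of $\f$.

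The key computation is to check that $\psi$ is strictly concave. Differentiating twice along an arbitrary direction $v\in S^{m-1}$ gives
\[
\frac{\pd^2\psi}{\pd v^2}(x) = \f(x)^{\al-2}\left(\f(x)\frac{\pd^2\f}{\pd v^2}(x) + (\al-1)\frac{\pd\f}{\pd v}(x)^2\right).
\]
By Proposition \ref{Kenprop4} applied in its strict form, the bracketed expression is strictly negative at every $x\in\R^m$ and every $v\in S^{m-1}$, since $\f$ is strictly $\al$-concave. As $\f(x)^{\al-2}>0$, the Hessian of $\psi$ is negative definite everywhere, so $\psi$ is strictly concave on $\R^m$.

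Finally, I would invoke the standard one-line argument that a $C^2$ function whose Hessian is negative definite everywhere has at most one critical point: if $x_1\neq x_2$ were both critical, the restriction $g(t)=\psi((1-t)x_1+tx_2)$ would be a strictly concave function on $[0,1]$ with $g'(0)=g'(1)=0$, contradicting the strict monotonicity of $g'$. Since the critical points of $\psi$ and $\f$ coincide, the conclusion for $\f$ follows. I do not foresee a serious obstacle; the content is essentially the Hessian computation above combined with Proposition \ref{Kenprop4}, and the uniform treatment of the three cases $\al>0$, $\al=0$, $\al<0$ is accomplished by the single transformation $\Psi$.
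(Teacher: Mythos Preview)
Your proof is correct and is closely related to, but not identical with, the paper's argument. You pass through the auxiliary function $\psi=\Psi\circ\f$ (the standard power transform that converts $\al$-concavity into ordinary concavity), show its Hessian is negative definite everywhere via Proposition~\ref{Kenprop4}, and then quote the classical uniqueness for strictly concave functions. The paper instead works directly with $\f$: at any critical point the inequality in Proposition~\ref{Kenprop4} forces $\pd^2\f/\pd v^2<0$, so two critical points would both be strict local maxima along the segment joining them; an interior local minimum of $\f$ on that segment then has vanishing directional derivative, and the same inequality yields $\pd^2\f/\pd v^2<0$ there, contradicting minimality. Your route is perhaps more conceptual (it explains \emph{why} Proposition~\ref{Kenprop4} has the form it does), while the paper's is a touch shorter since it avoids naming the transformed function. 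Both are valid and essentially equivalent in content.
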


\begin{proof}
Suppose that $\f$ has two critical points $c_1$ and $c_2$. Thanks to Proposition \ref{Kenprop4}, both $c_1$ and $c_2$ are maximal points of $\f$. Hence, taking $v=(c_1 -c_2 )/\vert c_1 -c_2 \vert$, there exists a $0< \la <1$ such that the point $(1-\la )c_1 + \la c_2$ is a minimal point $\f$ in the direction $v$. But Proposition \ref{Kenprop4} guarantees
\[
0 \leq \frac{\pd^2 \f}{\pd v^2}\((1-\la )c_1 + \la c_2  \) <0 ,
\]
which is a contradiction.
\end{proof}
\section{Balance law}

In this section, we investigate a necessary and sufficient condition for the existence of critical points of  $V_\Ome^{(\al )}$ and $Pf (\cdot ,h)$ independent of $\al$ and $h$, respectively. We show that the condition is the balance law introduced in \cite[Theorem 1]{MS1}.
\subsection{Balance law for Riesz potentials}

Let $\Ome$ be a body (the closure of a bounded open set) in $\R^m$. In this subsection, we discuss the existence of a critical point of $V_\Ome^{(\al )}$ not moving with respect to $\al$. From Example \ref{O1ex3.1}, Lemma \ref{O1lem3.14} and Theorem \ref{O1thm3.23}, an $r^{\al -m}$-center (a critical point) of $V_\Ome^{(\al )}$ moves in general.

\begin{lem}\label{equivalence}
Let $\Ome$ be a body in $\R^m$. The following statements are equivalent:
\begin{itemize}
\item $\Ome$ satisfies the balance law at the origin, that is,
\[
\int_{rS^{m-1} \cap \Ome} v d \sigma (v) =0
\]
for any $r\geq 0$.
\item The complement of $\Ome$ satisfies the balance law at the origin, that is,
\[
\int_{rS^{m-1} \cap \Ome^c} v d \sigma (v) =0
\]
for any $r\geq 0$.
\item For any $\rho \geq 0$, both $\Ome \sm \rho B^m$ and $\Ome \cap \rho B^m$ satisfy the balance law at the origin, that is,
\[
\int_{rS^{m-1} \cap \( \Ome \sm \rho B^m \)} v d \sigma (v) 
= \int_{rS^{m-1} \cap \( \Ome \cap \rho B^m \)} v d \sigma (v)
=0
\]
for any $r\geq 0$.
\end{itemize}
\end{lem}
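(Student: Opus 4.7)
The plan is to exploit the antipodal symmetry of each sphere $rS^{m-1}$ centered at the origin, and to observe that $rS^{m-1}$ either sits entirely inside $\rho B^m$ or entirely outside of it, depending only on how $r$ compares to $\rho$. Both observations reduce everything to the statement that the identity $\int_{rS^{m-1}} v\, d\sigma(v) = 0$ holds for every $r \geq 0$.

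First, for the equivalence of the first two conditions, I would write
\[
\int_{rS^{m-1}} v\, d\sigma (v)
= \int_{rS^{m-1} \cap \Ome} v\, d\sigma (v) + \int_{rS^{m-1} \cap \Ome^c} v\, d\sigma (v),
\]
noting that the left-hand side vanishes because $-v$ is in $rS^{m-1}$ whenever $v$ is (apply the change of variables $v \mapsto -v$, under which $d\sigma$ is invariant). Hence the two integrals on the right are negatives of each other and vanish simultaneously, giving the equivalence of the first and second bullet points.

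For the equivalence with the third bullet point, I would treat the three cases $r<\rho$, $r=\rho$ and $r>\rho$. Since $\rho B^m$ is the closed ball of radius $\rho$, for $r \leq \rho$ one has $rS^{m-1}\subset \rho B^m$, so
\[
rS^{m-1} \cap \( \Ome \cap \rho B^m \) = rS^{m-1} \cap \Ome,
\qquad
rS^{m-1} \cap \( \Ome \sm \rho B^m \) = \emptyset,
\]
whereas for $r>\rho$ the roles are swapped and
\[
rS^{m-1} \cap \( \Ome \cap \rho B^m \) = \emptyset,
\qquad
rS^{m-1} \cap \( \Ome \sm \rho B^m \) = rS^{m-1} \cap \Ome.
\]
In either case, each of the two integrals appearing in the third bullet point equals either $\int_{rS^{m-1}\cap\Ome} v\, d\sigma(v)$ or $0$. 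Therefore the third condition holds for all $r,\rho \geq 0$ precisely when $\int_{rS^{m-1}\cap\Ome} v\, d\sigma(v) = 0$ for every $r \geq 0$, which is the first condition.

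This proof is essentially a bookkeeping exercise; I do not foresee any real obstacle beyond carefully handling the edge case $r=\rho$ (where $rS^{m-1}$ lies on the boundary of $\rho B^m$ and so is contained in the closed ball) so that the partition into $\Ome\cap\rho B^m$ and $\Ome\sm\rho B^m$ behaves consistently.
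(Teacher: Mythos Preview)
Your proposal is correct and follows essentially the same approach as the paper: the paper's proof also uses the decomposition $\int_{rS^{m-1}} v\,d\sigma(v) = \int_{rS^{m-1}\cap\Ome} + \int_{rS^{m-1}\cap\Ome^c}$ for the first equivalence, and the decomposition $\int_{rS^{m-1}\cap\Ome} = \int_{rS^{m-1}\cap(\Ome\sm\rho B^m)} + \int_{rS^{m-1}\cap(\Ome\cap\rho B^m)}$ for the second. Your version is slightly more explicit in spelling out the case analysis $r\le\rho$ versus $r>\rho$ that makes one summand vanish, which the paper leaves implicit.
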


\begin{proof}
The equivalence of the first and second assertions follows from the identity
\[
0=\int_{r S^{m-1}} v d\sigma (v) 
= \( \int_{rS^{m-1} \cap \Ome} + \int_{rS^{m-1} \cap \Ome^c} \) v d \sigma (v) .
\] 
The equivalence of the first and third assertions follows from the identity
\[
\int_{r S^{m-1} \cap \Ome} v d\sigma (v) 
= \( \int_{rS^{m-1} \cap \( \Ome \sm \rho B^m \)} + \int_{r S^{m-1} \cap \( \Ome \cap \rho B^m \)}  \) v d\sigma (v) ,
\]
which completes the proof.
\end{proof}

\begin{thm}\label{balanceV1}
Let $\Ome$ be a body in $\R^m$. 
\begin{enumerate}[$(1)$]
\item The origin is a critical point of $V_\Ome^{(\al)}$ for any $\al >1$ if and only if the body $\Ome$ satisfies the balance law at the origin.
\item Suppose that the origin is not on the boundary of $\Ome$. The origin is a critical point of $V_\Ome^{(\al)}$ for any $\al \notin [0,1]$ if and only if the body $\Ome$ satisfies the balance law at the origin.
\item Suppose that $\Ome$ has a piecewise $C^1$ boundary, and that the origin is not on the boundary of $\Ome$. The origin is a critical point of $V_\Ome^{(\al)}$ for any $\al \in \R$ if and only if the body $\Ome$ satisfies the balance law at the origin.
\end{enumerate}
\end{thm}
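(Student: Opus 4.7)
\emph{Proof plan.} My plan is to reduce the vanishing of $\nabla V_\Ome^{(\al)}(0)$ to a family of radial--moment identities, and then recover the balance law by polynomial density. The key observation is that every formula for $\nabla V_\Ome^{(\al)}(0)$ collected in Remark \ref{diffV} and Lemma \ref{diffV01} has the common shape
\[
\nabla V_\Ome^{(\al)}(0) \;=\; c(\al)\int_{E}|y|^{\al-m-2}\,y\,dy
\]
with a nonzero scalar $c(\al)$ and with $E$ equal to $\Ome$, to $\Ome^c$, or to $\Ome\setminus B_\ep(0)$, depending on the sign of $\al$ and on the position of the origin relative to $\Ome$ (interior, exterior, or, in part~(1), boundary).

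Switching to polar coordinates $y = r\omega$, each such integral becomes
\[
\int_{E}|y|^{\al-m-2}\,y\,dy \;=\; \int_0^R r^{\al-2}\,G_E(r)\,dr,
\qquad
G_E(r) \;:=\; \int_{\{\omega\in S^{m-1}:\,r\omega\in E\}}\omega\,d\sigma(\omega),
\]
where $R = \diam \Ome$. When $E = \Ome\setminus B_\ep(0)$ with $\ep < \dist(0,\Ome^c)$, the excised ball contributes nothing: for $r < \ep$ the inner integrand is the full sphere integral $\int_{S^{m-1}}\omega\,d\sigma(\omega) = 0$, so the $\ep$-dependence is artificial and we recover the same polar integral as for $E = \Ome$. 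By Lemma \ref{equivalence}, the balance law for $\Ome$ is equivalent to the balance law for $\Ome^c$, and indeed $G_{\Ome^c}(r) = -G_\Ome(r)$ because $\int_{S^{m-1}}\omega\,d\sigma(\omega)=0$; consequently each of parts (1)--(3) of the theorem reduces to the single claim that $\int_0^R r^{\al-2}\,G_\Ome(r)\,dr = 0$ for every $\al$ in the stated range if and only if $G_\Ome\equiv 0$.

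Sufficiency of the balance law is then immediate. For necessity, each of the three allowed ranges of $\al$ contains the arithmetic progression $\al = k+2$, $k = 0,1,2,\ldots$; working coordinate by coordinate along this progression gives $\int_0^R r^k\,(G_\Ome)_j(r)\,dr = 0$ for every $k\in\N$ and each component $(G_\Ome)_j$. Since $G_\Ome$ is bounded on $[0,R]$, Weierstrass approximation forces $(G_\Ome)_j \equiv 0$ almost everywhere; continuity of $r \mapsto G_\Ome(r)$ outside the (at most countable) set of radii where $rS^{m-1}$ meets $\partial\Ome$ in a set of positive $(m-1)$-measure then upgrades this to vanishing for every $r\ge 0$. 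The main obstacle I expect is the case bookkeeping in the polar reduction: the sub-cases of Remark \ref{diffV}, together with Lemma \ref{diffV01}, must each be checked to collapse to the same polar integrand, so that the one-variable moment argument can be invoked uniformly across parts~(1)--(3).
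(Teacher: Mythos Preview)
Your reduction to the polar integral $\int_0^R r^{\al-2}G_\Ome(r)\,dr$ is correct and matches the paper's first step. The divergence is in how the vanishing of this family of integrals forces $G_\Ome\equiv 0$: the paper substitutes $r=e^{-s}$ to rewrite the integral as a bilateral Laplace transform $\mathcal{B}[\,G_\Ome(e^{-\bullet})\,](\al-1)$ and invokes injectivity of $\mathcal{B}$, whereas you sample $\al$ along the integers $\al=k+2$ and appeal to the Hausdorff moment problem / Weierstrass approximation. Your route is a bit more elementary and has the pleasant feature of showing that the discrete family $\al\in\{2,3,4,\dots\}$ already suffices for necessity; the paper's transform argument, on the other hand, parallels the Laplace-transform proof given later for Poisson's integral (Theorem~\ref{balanceP1}) and so unifies the two settings methodologically.

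One small caution: your claim that the set of radii $r$ with $\sigma(rS^{m-1}\cap\partial\Ome)>0$ is \emph{at most countable} is not obvious for a general body and is not needed---the moment argument (like the paper's Laplace-transform argument) yields $G_\Ome=0$ almost everywhere, and that is all either proof actually establishes. The upgrade to ``every $r$'' is a separate technical point that the paper does not address either; you could simply note that a.e.\ vanishing is equivalent for all purposes of the gradient identities, or drop the countability claim and leave the balance law stated a.e.
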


\begin{proof}
(1) The first assertion in Remark \ref{diffV} guarantees that the condition $\nabla V_\Ome^{(\al)}(0)=0$ for any $\al >1$ is equivalent to 
\[
\int_{\R^m} \chi_\Ome (y) \lvert y \rvert^{\al -m-2} y dy =0
\]
for any $\al >1$. Using the polar coordinate, the left hand side becomes
\begin{align*}
\int_0^{+\infty} r^{\al -2} \( \int_{S^{m-1}} \chi_\Ome (rv )v d\sigma (v) \) dr 
&=\int_{-\infty}^{+\infty} e^{-(\al -1)s} \( \int_{S^{m-1}} \chi_\Ome \( e^{-s} v\) vd\sigma (v) \) ds \\
&=\mathcal{B} \left[ \int_{S^{m-1}} \chi_\Ome \( e^{-\bullet} v\) vd\sigma (v) \right] (\al -1) ,
\end{align*}
where we changed the valuable $r$ into $e^{-s}$ in the first equality, and the symbol $\mathcal{B}$ denotes the bilateral Laplace transform. The injectivity of the bilateral Laplace transform 
implies the conclusion.

(2) From the first assertion, it is sufficient to show that $\nabla V_\Ome^{(\al )} (0) =0$ for any $\al <0$ if $\Ome$ satisfies the balance law at the origin. If the origin is in the complement of $\Ome$, then the same argument as in the first assertion implies the conclusion (see also Remark \ref{diffV} (2)). We assume that the origin is in the interior of $\Ome$. 

Thanks to the third assertion in Remark \ref{diffV}, we have
\[
\nabla V_\Ome^{(\al )} (0) = (\al -m) \int_{\Ome^c} \lvert y \rvert^{\al -m-2} y dy .
\]
In the same manner as in the first assertion, the above vector vanishes for any $\al <0$ if the complement of $\Ome$ satisfies the balance law at the origin. Hence Lemma \ref{equivalence} implies the conclusion.

(3) From the second assertion, let us consider the case of $0\leq \al \leq 1$. If the origin is in the complement of $\Ome$, then the same argument as in the first assertion implies the conclusion (see also Remark \ref{diffV} (2)). We assume that the origin is in the interior of $\Ome$.

We take an $0 < \ep < \dist (0, \Ome^c)$. Thanks to Lemma \ref{diffV01}, $\nabla V_\Ome^{(\al )} (0) =0$ for any $0 \leq \al \leq 1$ if and only if 
\[
\int_{\Ome \sm \ep B^m} \lvert y \rvert^{\al -m-2} y dy =0
\]
for any $0\leq \al \leq 1$.
In the same manner as in the first assertion, the above equation holds for any $0\leq \al \leq 1$ if the set $\Ome \sm \ep B^m$ satisfies the balance law at the origin. Hence Lemma \ref{equivalence} implies the conclusion. 
\end{proof}

\begin{cor}\label{determinationV}
Let $\Ome$ be a body in $\R^m$. Suppose that $\Ome$ satisfies the balance law at a point $x$.
\begin{enumerate}[$(1)$]
\item If $m=1$ and $\al \in (2, +\infty ]$, or if $m \geq 2$ and $\al \in [m+1, +\infty ]$, the unique $r^{\al -m}$-center coincides with the point $x$. In particular, the centroid and the circumcenter of $\Ome$ coincide with $x$.
\item If $\Ome$ is convex, then, for any $\al \in (-\infty ,1] \cup [m+1 , +\infty ]$, the unique $r^{\al -m}$-center coincides with the point $x$.
\end{enumerate}
\end{cor}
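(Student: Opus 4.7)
The plan is to translate $x$ to the origin and couple Theorem \ref{balanceV1} --- which converts the balance law at the origin into the criticality of $V_\Ome^{(\al)}$ at the origin --- with the uniqueness-of-critical-point results of Lemmas \ref{O1lem3.10} and \ref{O1lem3.14}.

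For part (1), Theorem \ref{balanceV1}(1) directly yields $\nabla V_\Ome^{(\al)}(x) = 0$ for every $\al > 1$. In the range assumed ($m = 1$ with $\al > 2$, or $m \geq 2$ with $\al \geq m+1$), Lemma \ref{O1lem3.14} asserts that $V_\Ome^{(\al)}$ has a unique critical point, which must therefore coincide with $x$; by Definition \ref{O1def3.1}(1) this critical point is the $r^{\al-m}$-center. The centroid is recovered by specialising to $\al = m+2$ (which lies in the range) and invoking Example \ref{O1ex3.1}, while the circumcenter assertion ($\al = +\infty$) follows by passing to the limit in Theorem \ref{O1thm3.23}(1), since every $r^{\al-m}$-center along the range equals $x$.

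For part (2), I would first verify that when $\Ome$ is convex and satisfies the balance law at $x$, the point $x$ must lie in $\Ome^\circ$: otherwise convexity provides a separating or supporting hyperplane through (or near) $x$, and for sufficiently small $r > 0$ the set $rS^{m-1} \cap (\Ome - x)$ would sit in an open half-space, contradicting the vanishing of its first moment. With $x \in \Ome^\circ$ in hand, Theorem \ref{balanceV1}(2) gives criticality of $V_\Ome^{(\al)}$ at $x$ for every $\al \notin [0,1]$. Lemma \ref{O1lem3.10} supplies uniqueness of the critical point on $\Ome^\circ$ for every $\al \leq 1$, so for $\al \in (-\infty, 0)$ the $r^{\al-m}$-center is $x$, while $\al \in [m+1, +\infty)$ is covered by part (1). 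The endpoints $\al = \pm \infty$ follow from Theorem \ref{O1thm3.23}: each $r^{\al_j - m}$-center along any sequence $\al_j$ in the admitted range equals $x$, so the limit --- an incenter or the circumcenter --- is $x$ as well.

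The main obstacle is the band $\al \in [0, 1]$ for a convex body whose boundary is merely Lipschitz rather than piecewise $C^1$, because in that generality Theorem \ref{balanceV1} does not deliver criticality of $V_\Ome^{(\al)}$ at $x$ directly. I would address this by a parameter-approximation argument: for $\al_j \in (-\infty, 0)$ with $\al_j \to \al \in [0,1]$, each $V_\Ome^{(\al_j)}$ has $x$ as its unique critical point and hence its unique maximiser on $\Ome^\circ$ (Lemma \ref{O1lem3.10}); the strict concavity of $V_\Ome^{(\al)}$ in the limit, together with the continuous dependence of $V_\Ome^{(\al)}(y)$ on $\al$ on compact subsets of $\Ome^\circ$, would then let me pass to the limit and conclude that $x$ is also the unique critical point of $V_\Ome^{(\al)}$ for $\al \in [0,1]$.
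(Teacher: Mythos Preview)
Your treatment of part (1) and of part (2) for $\al\notin[0,1]$ matches the paper's: Theorem~\ref{balanceV1} gives criticality at $x$, Lemmas~\ref{O1lem3.10} and~\ref{O1lem3.14} give uniqueness, and the endpoints $\al=\pm\infty$ come from Theorem~\ref{O1thm3.23}. Your explicit verification that $x\in\Ome^\circ$ for convex $\Ome$ is a useful detail the paper leaves implicit.

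The divergence is at $\al\in[0,1]$ in part (2), and there your limiting argument has a genuine gap. The claimed continuous dependence of $V_\Ome^{(\al)}(y)$ on $\al$ fails across $\al=0$: for fixed $y\in\Ome^\circ$ one has $V_\Ome^{(\al)}(y)=\int_\Ome|y-z|^{\al-m}\,dz\to+\infty$ as $\al\to0^+$ (the singularity at $z=y$ becomes non-integrable), while by Proposition~\ref{O1prop2.5}(2) one has $V_\Ome^{(\al)}(y)=-\int_{\Ome^c}|y-z|^{\al-m}\,dz\to-\infty$ as $\al\to0^-$ (the tail at infinity ceases to be integrable). So you cannot pass from $\al_j<0$ to $\al\in[0,1]$ by continuity of the potentials. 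The paper takes a different route and invokes Lemma~\ref{diffVbd}: for a convex body the boundary satisfies the required inner and outer cone conditions, so for $0<\al\le1$ the potential has inward normal derivative $-\infty$ at every boundary point, forcing every $r^{\al-m}$-center into $\Ome^\circ$; by Lemma~\ref{O1lem3.10} it is then the unique interior critical point, which Theorem~\ref{balanceV1} identifies as $x$. This sidesteps the continuity-in-$\al$ issue entirely.
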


\begin{proof}
(1) Lemma \ref{O1lem3.14} guarantees the uniqueness of a critical point of $V_\Ome^{(\al)}$. Example \ref{O1ex3.1} and Theorem \ref{O1thm3.23} guarantees that the unique critical point is the centroid and the circumcenter, respectively.

(2) Lemmas \ref{O1lem3.10} and \ref{O1lem3.14} guarantees the uniqueness of a critical point of $V_\Ome^{(\al)}$. Lemma \ref{diffVbd} guarantees that, for any $0 < \al \leq 1$, every $r^{\al -m}$-center of $\Ome$ contained in the interior of $\Ome$. Thus every $r^{\al -m}$-center coincides with the critical point of $V_\Ome^{(\al)}$. 
\end{proof}

\begin{rem}
{\rm 
The same statement as in Corollary \ref{determinationV} holds if the uniqueness of a critical point of $V_\Ome^{(\al )}$. Some sufficient conditions for the uniqueness were studied in, for example, \cite{Her, O3, Skt2}.
}
\end{rem}
\subsection{Balance law for Poisson's integral}

In this subsection, we discuss the existence of a critical point of $Pf (\cdot ,h)$ not moving with respect to $h$.

\begin{lem}\label{kernel}
Poisson's kernel $p(z,h)$ is expressed as
\[
p(z,h)=\frac{2}{\pi^{(m+1)/2}h^m} \int_0^{+\infty} s^m \exp\( - \frac{\lvert z \rvert^2 + h^2}{h^2} s^2\) ds
\]
for any $z \in \R^m$ and $h>0$.
\end{lem}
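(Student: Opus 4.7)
The plan is to verify the identity by evaluating the right-hand integral explicitly via a standard Gaussian substitution and then comparing the result to the definition of $p(z,h)$. First I would set $a = (|z|^2+h^2)/h^2$ so the integral becomes $\int_0^{\infty} s^m e^{-a s^2}\, ds$. With the substitution $u = a s^2$ (so $s = (u/a)^{1/2}$ and $ds = \frac{1}{2} a^{-1/2} u^{-1/2}\, du$), this integral rewrites as $\frac{1}{2} a^{-(m+1)/2} \int_0^{\infty} u^{(m-1)/2} e^{-u}\, du = \tfrac12 a^{-(m+1)/2} \Gamma\!\left(\tfrac{m+1}{2}\right)$.

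Next I would plug this back in. Since $a^{-(m+1)/2} = h^{m+1}(|z|^2+h^2)^{-(m+1)/2}$, the right-hand side of the claimed formula equals
\[
\frac{2}{\pi^{(m+1)/2} h^m} \cdot \frac{h^{m+1}\,\Gamma\!\left(\tfrac{m+1}{2}\right)}{2\,(|z|^2+h^2)^{(m+1)/2}}
= \frac{\Gamma\!\left(\tfrac{m+1}{2}\right)}{\pi^{(m+1)/2}} \cdot \frac{h}{(|z|^2+h^2)^{(m+1)/2}}.
\]

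Finally, I would invoke the classical formula for the surface area of the unit sphere $S^m \subset \R^{m+1}$, namely
\[
\sigma_m\!\left(S^m\right) = \frac{2\,\pi^{(m+1)/2}}{\Gamma\!\left(\tfrac{m+1}{2}\right)},
\]
so that $\Gamma((m+1)/2)/\pi^{(m+1)/2} = 2/\sigma_m(S^m)$. Substituting this into the display above gives exactly $p(z,h) = \frac{2}{\sigma_m(S^m)} \cdot \frac{h}{(|z|^2+h^2)^{(m+1)/2}}$, as defined. There is no real obstacle here; the only step that requires care is the bookkeeping of exponents in the substitution $u = as^2$, and making sure the factor of $h^m$ in the prefactor cancels correctly against $a^{-(m+1)/2} = h^{m+1}/(|z|^2+h^2)^{(m+1)/2}$ to leave a single $h$ in the numerator.
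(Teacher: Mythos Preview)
Your proof is correct and follows essentially the same route as the paper: both use the substitution $u = \frac{|z|^2+h^2}{h^2}\,s^2$ to reduce the integral to $\Gamma((m+1)/2)$ and then invoke the formula $\sigma_m(S^m) = 2\pi^{(m+1)/2}/\Gamma((m+1)/2)$ to match the definition of $p(z,h)$. The only difference is cosmetic---you compute the integral first and then identify the constant, while the paper starts from the sphere-area formula and substitutes in the other direction.
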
 

\begin{proof}
We remark that the area of $S^m$ is defined as
\[
\sigma_m \( S^m \) 
= \frac{2\pi^{(m+1)/2}}{\Ga \( (m+1)/2\)} 
=2\pi^{(m+1)/2} \( \int_0^{+\infty} e^{-u} u^{(m-1)/2} du \)^{-1}.
\]
Changing the valuable $u$ into 
\[
u \mapsto \frac{\lvert z \rvert^2 +h^2}{h^2}s^2,
\]
direct computation shows the conclusion.
\end{proof}

\begin{rem}
{\rm Let $w$ be Weierstrass' kernel, that is,
\[
w(z,t)= \frac{1}{\( 4\pi t\)^{m/2}} \exp \( -\frac{\lvert z \rvert^2}{4t} \) .
\]
Lemma \ref{kernel} claims that Poisson's kernel is expressed as
\[
p(z,h) = \frac{2}{\sqrt{\pi}} \int_0^{+\infty} e^{-s^2} w\( z, \frac{h^2}{4s^2} \) ds .
\]
}
\end{rem}

\begin{thm}\label{balanceP1}
Let $f$ be a bounded function on $\R^m$. The origin is a critical point of Poisson's integral $Pf(\cdot ,h) :\R^m \to \R$ for any $h>0$ if and only if $f$ satisfies the balance law at the origin, that is,  
\[
\int_{rS^{m-1}} f(v)v  d\sigma (v) =0
\]
for any $r \geq 0$.
\end{thm}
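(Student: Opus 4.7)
The plan is to follow the template of Theorem \ref{balanceV1}: compute $\nabla Pf(0,h)$ explicitly, reduce it to a one-dimensional integral of the sphere-average $A(r) := \int_{S^{m-1}} f(rv) v d\sigma (v)$ against an $h$-dependent kernel, and then invert by injectivity of a suitable integral transform.

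First, since $f$ is bounded and the kernel $y/\( \lvert y \rvert^2 + h^2 \)^{(m+3)/2}$ is absolutely integrable on $\R^m$ for each fixed $h>0$, dominated convergence justifies differentiation under the integral and gives
\[
\nabla Pf(0,h) = \frac{2(m+1)h}{\sigma_m \( S^m \)} \int_{\R^m} \frac{y f(y)}{\( \lvert y \rvert^2 + h^2 \)^{(m+3)/2}} dy .
\]
Passing to polar coordinates $y = rv$, $r \geq 0$, $v \in S^{m-1}$, and interchanging the order of integration, this becomes
\[
\nabla Pf(0,h) = \frac{2(m+1)h}{\sigma_m \( S^m \)} \int_0^{+\infty} \frac{r^m}{\( r^2 + h^2 \)^{(m+3)/2}} A(r) dr .
\]
Since the balance law at the origin is exactly the vanishing of $A$ identically on $[0,+\infty )$, sufficiency is immediate.

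For the necessary direction, I assume the displayed integral vanishes for every $h>0$ and recover $A \equiv 0$. The natural substitution $u = r^2$, $v = h^2$ and the auxiliary function $B(u) = A \( \sqrt{u} \) u^{(m-1)/2} /2$ turn the hypothesis into
\[
\int_0^{+\infty} \frac{B(u)}{(u+v)^{(m+3)/2}} du = 0 \text{ for every } v > 0 .
\]
I will expand the kernel via the Euler identity
\[
\frac{1}{(u+v)^{(m+3)/2}} = \frac{1}{\Ga ((m+3)/2)} \int_0^{+\infty} t^{(m+1)/2} e^{-t(u+v)} dt
\]
and apply Fubini's theorem to rewrite the condition as
\[
\int_0^{+\infty} t^{(m+1)/2} e^{-tv} \mathcal{L} B (t) dt = 0 \text{ for every } v > 0 ,
\]
where $\mathcal{L}$ is the usual Laplace transform. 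Injectivity of $\mathcal{L}$ forces $t^{(m+1)/2} \mathcal{L} B (t) \equiv 0$, hence $\mathcal{L} B \equiv 0$ on $(0, +\infty)$; a second appeal to injectivity gives $B \equiv 0$ a.e., which is equivalent to $A (r) = 0$ for almost every $r \geq 0$.

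The main obstacle is the necessary direction, specifically the inversion of the generalized Stieltjes transform against $(u+v)^{-(m+3)/2}$. Justifying the Fubini interchange requires only the absolute integrability of $t^{(m+1)/2} e^{-t(u+v)} B(u)$ on $(0,+\infty )^2$, which follows from boundedness of $f$ and the decay of the weight $u^{(m-1)/2}/(u+v)^{(m+3)/2}$. An alternative, perhaps more streamlined route is to apply Lemma \ref{kernel} and write Poisson's kernel as a Gaussian superposition; the necessary direction then reduces, after two elementary changes of variable, directly to a single Laplace-transform injectivity, and parallels the known balance-law characterization for the Cauchy problem of the heat equation in \cite{MS1}.
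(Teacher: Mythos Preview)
Your proof is correct and follows essentially the same route as the paper: both arguments reduce the vanishing of $\nabla Pf(0,h)$ for all $h>0$ to two successive applications of Laplace-transform injectivity, via a Gamma/Gaussian integral representation of the kernel. The only cosmetic difference is the order of operations---the paper first invokes Lemma~\ref{kernel} to write $p(\cdot,h)$ as a superposition of Gaussians and then differentiates, whereas you differentiate first and then apply the equivalent Euler identity $\frac{1}{(u+v)^{(m+3)/2}} = \frac{1}{\Ga((m+3)/2)}\int_0^\infty t^{(m+1)/2}e^{-t(u+v)}\,dt$ to the resulting rational kernel; you yourself note this alternative at the end.
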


\begin{proof}
Thanks to Lemma \ref{kernel}, the condition $\nabla Pf(0,h) =0$ for any $h>0$ is equivalent to
\[
\int_0^{+\infty} e^{-s^2} s^{m+2} \( \int_{\R^m} \exp \( - \frac{\lvert y \rvert^2}{h^2}s^2 \) f(y) ydy \) ds =0
\]
for any $h>0$. 

Changing the valuable $s$ as $u=(s/h)^2$, the left hand side becomes
\[
\frac{h^{m+3}}{2} \int_0^{+\infty} e^{-h^2u} u^{(m+1)/2} \( \int_{\R^m} e^{-u\lvert y\rvert^2} f(y)ydy \)du
=\frac{h^{m+3}}{2} \mathcal{L} \left[ \bullet^{(m+1)/2} \int_{\R^m} e^{-\bullet \lvert y\rvert^2} f(y) ydy \right] \( h^2\) ,
\]
where the symbol $\mathcal{L}$ denotes the Laplace transform. From the injectivity of the Laplace transform, the condition $\nabla Pf(0,h) =0$ for any $h>0$ is equivalent to 
\[
\int_{\R^m} e^{-u\lvert y \rvert^2}f(y) ydy =0
\]
for any $u > 0$.

Using the polar coordinate, the left hand side becomes
\begin{align*}
\int_0^{+\infty} e^{-ur^2} r^m \( \int_{S^{m-1}} f(rv) v d\sigma (v) \)dr 
&= \frac{1}{2} \int_0^{+\infty} e^{-us} s^{(m-1)/2} \( \int_{S^{m-1}} f\( \sqrt{s}v \)v d\sigma (v) \)ds \\
&=\frac{1}{2} \mathcal{L} \left[ \bullet^{(m-1)/2} \int_{S^{m-1}} f\( \sqrt{\bullet} v\) vd \sigma (v) \right] (u) ,
\end{align*}
where we changed the valuable $r$ as $s=r^2$ in the first equality. The injectivity of the Laplace transform implies the conclusion.
\end{proof}

\begin{cor}\label{determinationP}
Let $f$ be a compactly supported bounded function on $\R^m$.
\begin{enumerate}
\item[$(1)$] If the mass of $f$ does not vanish, then the centroid of $f$ is the unique candidate for a stationary critical point of Poisson's integral $Pf(\cdot ,h):\R^m \to \R$.
\item[$(2)$] If the mass of $f$ vanishes, and if the first moment of $f$ does not vanish, then there is no stationary critical point of Poisson's integral $Pf(\cdot ,h):\R^m \to \R$.
\end{enumerate}
\end{cor}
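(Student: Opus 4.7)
The plan is to reduce both assertions to a single first-moment identity obtained by integrating the spherical balance law against all radii. First I would translate Theorem \ref{balanceP1} to an arbitrary candidate point $x_0$ by applying it to $\tilde f := f(\cdot + x_0)$; this shows that $x_0$ is a stationary critical point of $Pf(\cdot ,h)$ if and only if
\[
\int_{S_r (x_0)} f(y)(y-x_0) d\sigma (y) = 0
\]
for every $r \geq 0$.

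Next I would integrate this spherical identity in $r$ over $[0,+\infty)$ and invoke the polar coordinate decomposition $dy = r^{m-1} dr d\sigma (v)$ centered at $x_0$. Since $f$ is bounded with compact support, the resulting double integral is absolutely convergent, Fubini applies, and one obtains
\[
\int_{\R^m} f(y)(y-x_0) dy = 0 ,
\]
equivalently $x_0 \int_{\R^m} f(y) dy = \int_{\R^m} f(y) y dy$.

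Both parts of the corollary then follow at once. For (1), if the total mass does not vanish one may divide through, forcing $x_0$ to coincide with the centroid of $f$; hence the centroid is the only possible stationary critical point. For (2), the identity collapses to $\int_{\R^m} f(y) y dy = 0$, which directly contradicts the hypothesis that the first moment is nonzero, so no stationary critical point can exist. Essentially all the conceptual work is already housed in Theorem \ref{balanceP1}; the only auxiliary step is the passage from the spherical identity to the solid one, which is routine under the standing integrability assumptions, so I do not anticipate any substantive obstacle.
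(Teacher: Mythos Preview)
Your argument is correct and follows essentially the same route as the paper: translate the balance law of Theorem~\ref{balanceP1} to an arbitrary base point, multiply by the appropriate radial weight and integrate over $r\in[0,\infty)$ to obtain $\int_{\R^m} f(y)(y-x_0)\,dy=0$, and then read off both assertions from the resulting first-moment identity $x_0\int f=\int fy\,dy$. The only cosmetic difference is that the paper writes the spherical integral over the unit sphere with the weight $r^m$ explicitly, whereas you absorb the factor into the surface measure on $S_r(x_0)$.
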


The proof is due to \cite[p.259]{MS1}.

\begin{proof}
Suppose that $f$ satisfies the balance law at a point $x$, that is,
\[
\int_{S^{m-1}} f(x+rv) v d\sigma (v) =0
\]
for any $r \geq 0$. Then we obtain
\[
0
= \int_0^{+\infty} r^m \( \int_{S^{m-1}} f(x+rv)vd\sigma (v) \)dr 
= \int_{\R^m} f(x+y) y dy 
=\int_{\R^m} f(z) (z-x) dz  ,
\]
which is equivalent to 
\[
\( \int_{\R^m} f(z)dz \) x=\int_{\R^m} f(z)z dz .
\]
Namely, any critical point of $Pf(\cdot ,h)$ independent of $h$ is determined by this equation if it exists.
\end{proof}

\section{Convex polyhedrons having stationary radial centers}

In this section, we characterize a convex polyhedron having stationary radial centers. The results in this section were known as \cite[Theorems 2 and 6]{MS4} in more difficult situation. Let us give elementary proofs to them in our situation.

\begin{prop}\label{characterization}
Let $\Ome$ be a convex polyhedron in $\R^m$, and $r_* = \dist (0, \Ome^c )$. We assume that the intersection $r_* S^{m-1} \cap \pd \Ome$ consists of $k$ points $\{ p_1, \ldots ,p_k \}$. If $\Ome$ satisfies the balance law at the origin, then we have
\[
\sum_{j=1}^{k} p_j =0 .
\]
\end{prop}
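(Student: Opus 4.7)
The plan is to exploit the balance law applied just outside the inscribed sphere $r_* S^{m-1}$, using the equivalence between the balance law for $\Omega$ and for $\Omega^c$ from Lemma \ref{equivalence}. For $r<r_*$ the sphere $rS^{m-1}$ lies entirely in the interior of $\Omega$, so the balance law for the complement is vacuous there; all the information will come from $r$ slightly larger than $r_*$.

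First I would establish the local geometric picture near each $p_j$. Because $|p_j|=r_*=\dist(0,\Omega^c)$ and $\Omega$ is convex, the supporting hyperplane of $\Omega$ at $p_j$ must be orthogonal to $p_j$; that is, there is a facet $F_j\subset\partial\Omega$ lying in $H_j=\{x\in\R^m: x\cdot p_j = r_*^2\}$ with $p_j$ its perpendicular foot from $0$. Distinct $p_j$'s correspond to distinct facets, because the unique perpendicular foot from $0$ onto any given hyperplane is a single point. Polyhedrality then yields a $\delta>0$ such that for every $r\in(r_*,r_*+\delta)$ the intersection $rS^{m-1}\cap\Omega^c$ is a disjoint union of $k$ spherical caps
\[
C_j(r) = rS^{m-1}\cap \{x\in\R^m: x\cdot e_j > r_*\}, \qquad e_j := p_j/r_*,
\]
and nothing else (on $rS^{m-1}$ away from the $p_j$'s, $\partial\Omega$ stays at positive distance, so such points remain inside $\Omega$ for $r$ close enough to $r_*$).

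Next I would compute each cap integral. By the rotational symmetry about the axis $\R e_j$, the component of $\int_{C_j(r)} v\, d\sigma(v)$ perpendicular to $e_j$ vanishes, and since all caps have the same half-angle $\theta_*(r)=\arccos(r_*/r)$,
\[
\int_{C_j(r)} v\, d\sigma(v) \;=\; c(r)\, e_j, \qquad c(r) = r^{m}\sigma_{m-2}(S^{m-2}) \int_0^{\theta_*(r)} \cos\theta\,\sin^{m-2}\theta\, d\theta \;>\;0,
\]
independent of $j$. Summing over $j$ and invoking Lemma \ref{equivalence} together with the balance law for $\Omega^c$,
\[
0 \;=\; \int_{rS^{m-1}\cap\Omega^c} v\, d\sigma(v) \;=\; c(r) \sum_{j=1}^{k} e_j,
\]
so $\sum_j e_j=0$, and multiplying by $r_*$ yields $\sum_{j=1}^{k} p_j = 0$.

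The main technical point is the clean decomposition of $rS^{m-1}\cap\Omega^c$ into the $k$ caps for $r$ just above $r_*$: one must rule out contributions from points of $\partial\Omega$ lying on $rS^{m-1}$ but not close to any $p_j$. This is exactly where convexity and the polyhedral (finitely many facets) hypothesis enter — they give a uniform positive gap between $r_*$ and $\dist(0,\partial\Omega\setminus\bigcup_j U_j)$ for small neighbourhoods $U_j$ of the $p_j$'s — and this is the step I would treat most carefully. Everything else (the symmetry of the cap integral, the trivial case $r_*=0$) is routine.
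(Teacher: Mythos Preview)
Your proposal is correct and follows essentially the same approach as the paper: pass to the balance law for $\Omega^c$ via Lemma~\ref{equivalence}, evaluate it on a sphere of radius $r_*+\delta$, and observe that the integral over each connected component of $(r_*+\delta)S^{m-1}\cap\Omega^c$ is a positive multiple of the corresponding $p_j$. The paper's version is terser---it simply asserts that the centroid of each component lies on the ray through $p_j$ and that the scalar is the same for all $j$---whereas you make the decomposition into congruent spherical caps and the value of $c(r)$ explicit; your added care about why the decomposition holds (polyhedrality, perpendicular feet, choice of $\delta$) is exactly the content hidden behind the paper's one-line choice of $\delta$.
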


\begin{proof}
We remark that, from Corollaries \ref{determinationP} or \ref{determinationV}, the origin is the centroid of $\Ome$. In particular, the origin is in the interior of $\Ome$.

We take a positive $\de$ such that the sphere $(r_* + \de )S^{m-1}$ does not contain any vertex of $\pd \Ome$. Since $\Ome$ satisfies the balance law at the origin, Lemma \ref{equivalence} guarantees that the complement of $\Ome$ satisfies the balance law at the origin. In particular, we have
\[
\int_{\( r_* +\de \) S^{m-1} \cap \Ome^c} v d\sigma (v) 
=0 .
\]
This equation means that the centroid of $(r_* + \de )S^{m-1} \cap \Ome^c$ coincides with the origin. 

On the other hand, the centroid of each connected component of $(r_* +\de )S^{m-1} \cap \Ome^c$ is on the half line from the origin through the corresponding $p_j$. Hence we obtain
\[
\int_{(R+\de )S^{m-1} \cap \Ome^c} v d\sigma (v) = s \sum_{j=1}^k p_j 
\]
for some $s>0$, which completes the proof.
\end{proof}

\begin{rem}
{\rm 
Under the situation in Proposition \ref{characterization}, we have $k \geq 2$, and, for any $1 \leq i \leq k$, there is at least one contact point $p_j$ with $p_i \cdot p_j <0$.
}
\end{rem}

\begin{thm}\label{triangle_quadrangleP}
\begin{enumerate}[$(1)$]
\item Let $\Ome$ be a triangle in $\R^2$. If $\Ome$ satisfies the balance law at the origin, then $\Ome$ is an equilateral triangle centered at the origin.
\item Let $\Ome$ be a quadrangle in $\R^2$. If $\Ome$ satisfies the balance law at the origin, then $\Ome$ is a parallelogram centered at the origin.
\end{enumerate}
\end{thm}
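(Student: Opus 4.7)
The plan is to combine the contact-point identity from Proposition \ref{characterization} with a direct analysis of the full balance law at circles of carefully chosen radius. Let $r_* = \dist(0,\Ome^c)$ and let $p_1,\dots,p_k$ be the contact points of the inscribed disk with $\pd\Ome$. Since each side of a convex polygon is tangent to the inscribed disk in at most one point, we have $2\leq k\leq 3$ for a triangle and $2\leq k\leq 4$ for a quadrangle. In every case all $p_j$ lie on the circle of radius $r_*$ and satisfy $\sum_j p_j=0$, and Corollary \ref{determinationV} places the centroid of $\Ome$ at the origin. The side tangent at $p_j$ is perpendicular to $p_j$, so $\sum p_j=0$ directly constrains the directions of the tangent sides.

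For (1), the case $k=2$ is impossible since $p_1=-p_2$ would force two of the three sides to be parallel. Hence $k=3$, and three vectors of common length with zero sum must be pairwise at angle $2\pi/3$; the three tangent lines therefore bound an equilateral triangle coinciding with $\Ome$, and the origin, being the incenter, is its center.

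For (2), I would split by $k$. The case $k=4$ reduces to an elementary lemma: four vectors on a circle in cyclic order with zero sum must form two antipodal pairs that are opposite in the cyclic order. Writing $m=(p_1+p_2)/2$, the identity $|p_j|=r_*$ gives $(p_1-p_2)\cdot(p_1+p_2)=0$, so both $\{p_1,p_2\}$ and $\{p_3,p_4\}$ are symmetric about the line through the origin and $m$, forcing $\{p_3,p_4\}=\{-p_1,-p_2\}$; a cyclic-ordering check rules out adjacent antipodal pairs. This gives two pairs of parallel opposite sides at equal distance from the origin, i.e.\ a parallelogram centered at the origin. The case $k=3$ I would rule out as follows: the same $120^\circ$ argument shows that the three tangent lines bound an equilateral triangle $T$ with vertices $-2p_1,-2p_2,-2p_3$ (summing to zero, so $T$ has centroid at the origin), and $\Ome$ is obtained from $T$ by cutting off one vertex with the fourth side, which has distance $>r_*$ from the origin. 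The cut-off triangle therefore lies outside the inscribed disk, so its centroid is nonzero; additivity of first moments then places the centroid of $\Ome$ away from the origin, contradicting Corollary \ref{determinationV}.

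The hard part is the case $k=2$, which Proposition \ref{characterization} leaves open: we only learn that two opposite sides are parallel (say the lines $y=\pm r_*$ after a rotation), so $\Ome$ is a trapezoid, and I must extract parallelism of the remaining two sides from the balance law. Let $d_2,d_4>r_*$ and $\hat n_2,\hat n_4$ be the distances and outer unit normals of the two non-tangent sides. On $rS^1$ with $r$ just above $r_*$, by Lemma \ref{equivalence} balance on $\Ome$ is equivalent to balance on $\Ome^c$; the arcs of $rS^1\cap\Ome^c$ cut off by the top and bottom sides are symmetric about the $y$-axis and their contributions to $\int v\,d\sigma$ cancel. For $r$ in the open interval between $d_2$ and $d_4$ exactly one extra arc appears, and its integral is a nonzero multiple of the corresponding normal, violating the balance law; so this interval must be empty and $d_2=d_4=:d$. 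For $r$ just above $d$, both extra arcs are present, small enough to be disjoint from the top and bottom arcs, and the balance law collapses to $\hat n_2+\hat n_4=0$, forcing parallelism of the two non-tangent sides and, together with $d_2=d_4$, identifying $\Ome$ as a parallelogram centered at the origin. The technical point to verify is that $r$ can indeed be chosen small enough that the four arcs have not yet merged at vertices of $\Ome$, which is immediate since every vertex is strictly farther from the origin than its two adjacent sides.
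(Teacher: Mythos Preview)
Your approach differs from the paper's throughout, and most of it works nicely. For (1) the paper simply quotes Corollary~\ref{determinationV} (centroid $=$ circumcenter forces the triangle to be equilateral), while your incircle argument via Proposition~\ref{characterization} is equally short. For the quadrangle with $k=4$ your antipodal-pair argument is a clean geometric replacement for the paper's coordinate computation, and for $k=3$ your first-moment argument (cutting a corner off an equilateral triangle moves the centroid) is a pleasant alternative to the paper's use of the minimal unfolded region via Proposition~\ref{critlocation}.

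The case $k=2$, however, has a genuine gap. After rotating so that the two tangent sides are $y=\pm r_*$, the quadrangle lies in the strip $\lvert y\rvert\le r_*$, and you claim that as $r$ crosses $d_2$ ``exactly one extra arc appears'' in $rS^1\cap\Ome^c$. This presupposes that the foot $F_2=d_2\hat n_2$ of the perpendicular from the origin to line~$2$ lies in the open strip $\lvert y\rvert< r_*$, i.e.\ on the interior of segment~$2$. That need not hold: with $r_*=1$ and vertices $q_{12}=(1,1)$, $q_{23}=(4,-1)$ one checks $d_2=5/\sqrt{13}>1$ while $F_2$ has $y$-coordinate $>1$, so $F_2$ lies above the strip. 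In that situation, for $r$ just above $d_2$ the two intersection points of $rS^1$ with line~$2$ both lie in the region $y>r_*$ already counted in the ``top arc'', and no new component of $rS^1\cap\Ome^c$ is created; your cancellation and the conclusion $d_2=d_4$ then have no footing. Your closing sentence (``every vertex is strictly farther from the origin than its two adjacent sides'') does not address this, since the issue is the location of $F_2$, not the size of $\lvert q_{12}\rvert$; indeed $\lvert q_{12}\rvert=d_2$ is also possible.

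The paper handles $k=2$ quite differently: it invokes Corollary~\ref{determinationV} to place the \emph{circumcenter} at the origin, lets $R=\max_j\lvert q_j\rvert$, and does a short case analysis on which vertices attain $R$, using one application of the balance law on $(R-\de)S^1$ and Alexandrov-type folding. If you want to rescue your arc approach, you must either rule out the off-segment foot configuration (e.g.\ by exploiting the balance law on circles just beyond the nearest vertex, where the first new contribution is a small arc near $q_{12}$ with integral $\approx(\text{length})\cdot q_{12}\neq0$), or switch to the paper's circumradius viewpoint for this case.
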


\begin{proof}
(1) From Corollary \ref{determinationV}, the centroid coincides with the circumcenter, which implies the conclusion.

(2) Let $r_*= \dist ( 0, \Ome^c )$. Let us check the following three cases (2.1)--(2.3):

(2.1) Suppose that $r_* S^1 \cap \pd \Ome$ consists of four points $\{ p_1 ,p_2 ,p_3 ,p_4\}$. 

(2.1.1) Suppose $p_2 =-p_1$. From Proposition \ref{characterization}, we have $p_3 = -p_4$, that is, $\Ome$ is a rhombus. 

(2.1.2) Suppose $p_2 \neq -p_1$, that is, $p_1$ and $p_2$ are linearly independent. The point $p_3$ can be expressed as $p_3 =a p_1 + b p_2.$ From Proposition \ref{characterization}, we have $p_4 = -( 1+a )p_1 -(1+b) p_2$. Computing the simultaneous equations
\[
\begin{cases}
r_*^2 = \vert p_3 \vert^2 = \( a^2 +b^2\) r_*^2 + 2 ab p_1 \cdot p_2 ,\\
r_*^2 = \vert p_4 \vert^2 = \( 2+ 2a +2b +a^2 +b^2 \) r_*^2 +2(1+a+b+ab) p_1 \cdot p_2 ,
\end{cases}
\]
we get $(-a)+(-b) =1$. Hence $p_3$ coincides with either $-p_1$ or $-p_2$, and $\Ome$ is a rhombus.

(2.2) Suppose that $r_* S^1 \cap \pd \Ome$ consists of three points $\{ p_1 ,p_2 ,p_3\}$. From Proposition \ref{characterization}, we have
\[
r_*^2 = \lvert p_3 \rvert^2 = \lvert p_1 +p_2 \rvert^2 = 2r_*^2 + 2 p_1 \cdot p_2 ,
\]
which implies
\[
\angle \( p_1 ,p_2 \) = \angle \( p_2 , p_3 \) = \angle \( p_3, p_1 \) = \frac{2\pi}{3} .
\]
Hence the triangle $\triangle p_1 p_2 p_3$ is an equilateral triangle centered at the origin.

Taking a vertex of $\Ome$ with angle $\pi /3$ and folding the domain $\Ome$ in the line joining the origin and the vertex, the origin is not in the minimal unfolded region of $\Ome$, which contradicts to Proposition \ref{critlocation}. Thus this case does not occur.

(2.3) Suppose that $r_* S^1 \cap \pd \Ome$ consists of two points $\{ p_1 ,p_2\}$. From Proposition \ref{characterization}, we have $p_1 =-p_2$, that is, the two edges of $\Ome$ are parallel. We denote by $\{ q_1 , q_2, q_3, q_4\}$ the vertices of $\Ome$. We may assume that
\[
R= \lvert q_1 \rvert = \max \left\{ \lvert q_1 \rvert,\ \lvert q_2 \rvert,\ \lvert q_3 \rvert,\ \lvert q_4 \rvert \right\} 
\]
and $q_j$ is in the $j$-the quadrant. Thanks to Corollary \ref{determinationV}, the origin coincides with the circumcenter of $\Ome$. Thus there is at least one vertex $q_j$ $(j \neq 1)$ with $\vert q_j \vert =R$.

(2.3.1) Suppose $\vert q_2 \vert =R$. Folding the quadrangle $\Ome$ in the orthogonal complement of $p_1$, we conclude that $\vert q_3 \vert = \vert q_4 \vert =R$. Hence $\Ome$ is a rectangle.

(2.3.2) Suppose $\vert q_3 \vert = R$. If $\vert q_2 \vert \neq \vert q_4 \vert$, then the angles $\angle q_4 q_1 q_2$ and $\angle q_2 q_3 q_4$ are different, and we have
\[
\int_{\( R -\de \) S^1 \cap \Ome} v d\sigma (v) \neq 0
\]
for some small $\de>0$. Thus $\vert q_2 \vert = \vert q_4 \vert$, that is, $\Ome$ is a parallelogram.

(2.3.3) Suppose $\vert q_4 \vert =R$. Folding the quadrangle $\Ome$ in the line joining $p_1$ and $p_2$, we conclude that $\vert q_2 \vert = \vert q_3 \vert =R$. Hence $\Ome$ is a rectangle.
\end{proof}

\begin{rem}
{\rm 
The content in this Remark is due to Professor Rolando Magnanini (personal communication to the author). 

There exists a convex body such that it satisfies the balance law and does not have any symmetry. The procedure of the construction of such a convex body is as follows:
\begin{enumerate}[(1)]
\item We start with the unit disc $D^2$ centered at the origin.
\item We choose three directions $u$, $v$ and $w$ such that any non-trivial orthogonal action in $\R^2$ moves the frame $\{ u, v, w \}$, that is, for any $\mathrm{id} \neq g \in O(2)$, $\{ u, v, w\} \neq \{ gu, gv ,gw\}$.
\item For each $r>1$, we add three arcs $C_u (r)$, $C_v(r)$ and $C_w(r)$ whose centroids are on the rays $\overline{0u}$, $\overline{0v}$ and $\overline{0w}$, respectively.
\item We adjust the lengths of $C_u (r)$, $C_v (r)$ and $C_w (r)$ to keep the centroid of $C_u (r) \cup C_v (r) \cup C_w (r)$ at the origin.
\item We can make the union of $D^2$ and $\cup_r (C_u (r) \cup C_v (r) \cup C_w (r))$ convex (see Figure \ref{example}).   
\end{enumerate}
\begin{figure}[htbp]
\begin{center}
\scalebox{1.0}{\includegraphics[clip]{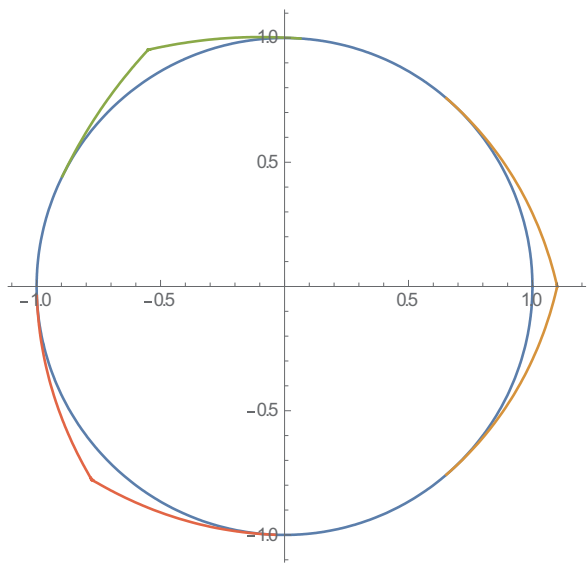}}
\caption{A convex body satisfying the balance law and having no symmetries}
\label{example}
\end{center}
\end{figure}

}
\end{rem}
\section{Strict power concavity of a potential}
In this section, we discuss the strict power concavity of a potential like $P_\Ome (\cdot ,h)$ and $W_\Ome (\cdot ,t)$ when $\Ome$ is a convex body. It derives the uniqueness of a critical point and the smoothness of the locus with respect to the parameter. 

\begin{lem}\label{concavity2m}
Let $\al \in \R \cup \{ \pm \infty \}$ and $\f :\R^m \to [0,+\infty )$ be $\al$-concave on $\R^m$. Then, the function
\[
F: \R^m \times \R^m \ni (x,y) \mapsto \f (x-y ) \in [0,+\infty )
\]
is $\al$-concave on $\R^m \times \R^m$.
\end{lem}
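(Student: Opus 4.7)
The plan is to verify the defining inequality for $\al$-concavity of $F$ directly, by reducing it to the $\al$-concavity of $\f$ via the linearity of the difference map $(x,y) \mapsto x-y$.

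First I would fix two arbitrary points $(x_1, y_1), (x_2, y_2) \in \R^m \times \R^m$ and a scalar $0 \leq \la \leq 1$, and compute the convex combination inside $F$:
\[
F\bigl((1-\la)(x_1, y_1) + \la (x_2, y_2)\bigr)
= \f\bigl((1-\la)x_1 + \la x_2 - (1-\la)y_1 - \la y_2\bigr).
\]
By linearity, the argument on the right equals $(1-\la)(x_1 - y_1) + \la (x_2 - y_2)$.

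Next I would apply the $\al$-concavity hypothesis on $\f$ to the two points $x_1 - y_1$ and $x_2 - y_2$ in $\R^m$:
\[
\f\bigl((1-\la)(x_1 - y_1) + \la (x_2 - y_2)\bigr)
\geq \bigl((1-\la)\f(x_1 - y_1)^\al + \la\, \f(x_2 - y_2)^\al\bigr)^{1/\al}.
\]
Since $\f(x_i - y_i) = F(x_i, y_i)$ for $i=1,2$, this is exactly the $\al$-concavity inequality for $F$ at the chosen points, with the usual conventions for $\al \in \{ 0, \pm \infty\}$ (or when $\f(x_1-y_1)\f(x_2-y_2) = 0$) passing through unchanged.

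There is no genuine obstacle here: the lemma is essentially the observation that $\al$-concavity is preserved under affine precomposition, and the map $(x,y) \mapsto x-y$ from $\R^m \times \R^m$ to $\R^m$ is linear, hence affine. The only minor care needed is to handle the degenerate values of $\al$ (namely $\al = 0$ and $\al = \pm \infty$) by invoking the case-by-case definition from the paper, but in each case the verification is identical since it only uses that $F(x_i, y_i)$ equals $\f$ evaluated at an affine image of $(x_i, y_i)$.
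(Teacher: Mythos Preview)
Your proof is correct and essentially identical to the paper's own argument: fix two points and a convex parameter, use linearity of $(x,y)\mapsto x-y$ to rewrite $F$ at the convex combination as $\f$ at a convex combination of differences, then apply the $\al$-concavity of $\f$ and substitute back. Your added remarks about the degenerate cases $\al\in\{0,\pm\infty\}$ and vanishing products are fine and simply make explicit what the paper leaves implicit in its definition.
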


\begin{proof}
Fix $(x_1, y_1 )$, $(x_2 ,y_2) \in \R^m \times \R^m$ and $0 \leq \la \leq 1$. From the $\al$-concavity of $\f$, we have
\begin{align*}
F \( (1-\la ) \( x_1 ,y_1 \) + \la \( x_2 ,y_2 \) \) 
&=\f \( (1-\la ) \( x_1 -y_1 \) + \la \( x_2 -y_2 \) \) \\
&\geq \( (1-\la ) \f \( x_1 -y_1 \)^\al + \la \f \( x_2 -y_2 \)^\al \)^{1/\al} \\
&= \( (1-\la ) F \( x_1 ,y_1 \)^\al + \la F\( x_2, y_2 \)^\al \)^{1/\al},
\end{align*}
which completes the proof.
\end{proof}

\begin{rem}\label{strict_concavity}
{\rm
Let $\al$, $\f$ and $F$ be as in Lemma \ref{concavity2m}. If $\f$ is strictly $\al$-concave on $\R^m$, then, for $(x_1, y_1)$, $(x_2, y_2) \in \R^m \times \R^m$ and $0 \leq \la \leq 1$, the following conditions are equivalent:
\begin{itemize}
\item The equation
\[
F \( (1-\la ) \( x_1 ,y_1 \) + \la \( x_2,  y_2 \) \) = \( (1-\la ) F \( x_1 ,y_1 \)^\al + \la F \( x_2 ,y_2 \)^\al \)^{1/\al}
\]
holds.
\item Any of the conditions $x_1 -y_1 = x_2 -y_2$, $\la =0$ or $\la =1$ holds.
\end{itemize}
}
\end{rem}

\begin{thm}\label{power-concavity}
Let $\Ome$ be a convex body in $\R^m$, $\al \geq -1/m$ and $\ga = \al /(1+m\al )$. Let $\f :\R^m \to [0,+\infty )$ be a continuous strictly $\al$-concave function. Then, the function
\[
G_\Ome (x) =  \int_\Ome \f (x-y)  dy ,\ x \in \R^m ,
\]
is strictly $\ga$-concave.
\end{thm}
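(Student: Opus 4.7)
The plan is to apply the Borell--Brascamp--Lieb inequality (Theorem \ref{blthm}) to $f(y) = \f(x_1-y)\chi_\Ome(y)$ and $g(y) = \f(x_2-y)\chi_\Ome(y)$ for arbitrary $x_1, x_2 \in \R^m$ and $\la \in (0,1)$, and then to refine the resulting $\ga$-concavity to a strict one using Remark \ref{strict_concavity} together with a boundary-layer argument for $\Ome$. Note that $\| f \|_1 = G_\Ome(x_1)$ and $\| g \|_1 = G_\Ome(x_2)$.

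First I would establish the pointwise bound $\mathcal{S}(z) \leq \f(u-z)\chi_\Ome(z)$, where $u = (1-\la)x_1 + \la x_2$ and $\mathcal{S}(z)$ denotes the Borell--Brascamp--Lieb supremum
\[
\mathcal{S}(z) = \sup_{(1-\la)y_1 + \la y_2 = z} \left( (1-\la) f(y_1)^\al + \la g(y_2)^\al \right)^{1/\al}.
\]
If one of $y_1, y_2$ lies outside $\Ome$ the bracketed expression vanishes by convention; otherwise $y_1, y_2 \in \Ome$ forces $z \in \Ome$ by convexity, and Lemma \ref{concavity2m} applied to the pairs $(x_i, y_i)$ gives $\f(u-z) \geq ((1-\la)\f(x_1-y_1)^\al + \la\f(x_2-y_2)^\al)^{1/\al}$. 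Integrating this bound and invoking Theorem \ref{blthm} yields
\[
G_\Ome(u) \geq \int_{\R^m} \mathcal{S}(z)\, dz \geq \left( (1-\la) G_\Ome(x_1)^\ga + \la G_\Ome(x_2)^\ga \right)^{1/\ga},
\]
which is the (non-strict) $\ga$-concavity of $G_\Ome$.

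For strictness when $x_1 \neq x_2$, set $d = x_1 - x_2 \neq 0$. Parametrising the constraint by $y_1 = z + \la w$, $y_2 = z - (1-\la) w$, the admissible $w$'s form the compact convex set $W_z = \{ w : z + \la w \in \Ome,\ z - (1-\la)w \in \Ome \}$. By Remark \ref{strict_concavity} applied to $F(x,y) = \f(x-y)$, equality in the pointwise $\al$-concavity bound can hold only at $w = d$, i.e.\ at $(y_1, y_2) = (z + \la d,\ z - (1-\la)d)$. Hence whenever $z \in \Ome$ and at least one of $z + \la d$, $z - (1-\la)d$ falls outside $\Ome$, the supremum --- attained by continuity and compactness of $W_z$ --- is taken at some $w^* \neq d$, and strict $\al$-concavity of $\f$ then forces $\mathcal{S}(z) < \f(u-z)$ strictly. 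The set $\{ z \in \c{\Ome} : z + \la d \notin \Ome \}$ is open and non-empty: were it empty, $\Ome + \la d \subset \Ome$ would iterate to contradict boundedness of $\Ome$. This open set therefore has positive Lebesgue measure, and on it the strict pointwise inequality $\mathcal{S}(z) < \f(u-z)$ holds and both sides are continuous in $z$, so integration yields $G_\Ome(u) > \int_{\R^m} \mathcal{S}(z)\, dz$, giving strict $\ga$-concavity.

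The main obstacle is precisely this last strictness analysis: one must carefully combine the rigidity of the equality case in the joint $\al$-concavity of $\f(x-y)$ (Remark \ref{strict_concavity}) with the observation that, for a bounded convex body $\Ome$ and a non-zero translation $\la d$, the ``bad'' pair $(z + \la d,\ z - (1-\la)d)$ cannot lie inside $\Ome \times \Ome$ for every $z \in \Ome$. A minor subtlety (which can be handled by shrinking the open set if necessary) is that strict $\al$-concavity of $\f$ at two distinct points is most cleanly invoked where $\f > 0$, so one should also confirm $\f(u-z) > 0$ on a subset of the positive-measure open set above.
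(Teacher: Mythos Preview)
Your proof is correct and follows essentially the same route as the paper's: both apply the Borell--Brascamp--Lieb inequality (Theorem~\ref{blthm}) to $\f(x_1-\cdot)\chi_\Ome$ and $\f(x_2-\cdot)\chi_\Ome$, then upgrade to strict $\ga$-concavity by invoking Remark~\ref{strict_concavity} to see that the pointwise bound $\mathcal{S}(z)\le\f(u-z)$ is strict precisely off the set $\Ome'=(\Ome-\la d)\cap(\Ome+(1-\la)d)$ (your ``$w=d$ admissible'' set), which misses a positive-measure part of $\Ome$. Your iteration argument for the non-emptiness of $\{z\in\c{\Ome}:z+\la d\notin\Ome\}$ is a detail the paper simply asserts, and your closing worry about needing $\f(u-z)>0$ is unnecessary: the strict inequality at $w^*\neq d$ follows directly from the strict $\al$-concavity hypothesis via Remark~\ref{strict_concavity}, with no positivity assumption required.
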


\begin{proof}
Fix distinct two points $x_1$ and $x_2 \in \R^m$. Let $0<\la <1$. Put
\[
F(x,y) =\f (x-y) ,\ F_\Ome (x,y) = F(x,y) \chi_\Ome (y) ,\ (x,y) \in \R^m \times \R^m .
\] 

We remark that, for any exterior point $y$ of $\Ome$, the convexity of $\Ome$ implies
\[
\sup_{(1-\la )y_1 + \la y_2 =y} \( (1-\la ) F_\Ome \( x_1 ,y_1 \)^\al + \la F_\Ome \( x_2 ,y_2 \)^\al \)^{1/\al} =0 .
\]

Let $\Ome'= ( \Ome -\la (x_1-x_2) ) \cap (\Ome + (1-\la) (x_1 -x_2))$. Direct computation shows that $y$ is in $\Ome'$ if and only if there exits two points $y_1$ and $y_2$ with
\[
\begin{cases}
(1-\la )y_1 +\la y_2 =y,\\
x_1 -y_1 =x_2 -y_2,\\
y_1, \ y_2 \in \Ome.
\end{cases}
\]
Therefore, for any point $y \in \Ome$, Remark \ref{strict_concavity} implies
\[
\sup_{(1-\la )y_1 + \la y_2 =y} \( (1-\la ) F_\Ome \( x_1 ,y_1 \)^\al + \la F_\Ome \( x_2 ,y_2 \) \)^\al 
\begin{cases}
= F \( (1-\la ) x_1+ \la x_2,  y \) &\( y \in \Ome' \) , \\
< F \( (1-\la ) x_1 + \la x_2,  y  \) &\( y \notin \Ome' \) .
\end{cases}
\]
Here, the inequality follows from the compactness of $\Ome$ and the continuity of $\f$.

Since the set $\Ome \sm \Ome'$ has a positive measure, Theorem \ref{blthm} implies
\begin{align*}
G_\Ome \( (1-\la ) x_1 + \la x_2 \) 
&= \( \int_{\Ome \sm \Ome'} + \int_{\Ome'} \) F \( (1-\la ) x_1 + \la x_2 , y  \) dy \\
&> \( \int_{\Ome \sm \Ome'} + \int_{\Ome'} \) \sup_{(1-\la )y_1 + \la y_2 =y} \( (1-\la )F_\Ome \( x_1 ,y_1 \)^\al + \la F_\Ome \( x_2,  y_2 \)^\al \)^{1/\al} dy \\
&= \int_{\R^m} \sup_{(1-\la )y_1 + \la y_2 =y} \( (1-\la )F_\Ome \( x_1 ,y_1 \)^\al + \la F_\Ome \( x_2,  y_2 \)^\al \)^{1/\al} dy \\
&\geq \( (1-\la ) G_\Ome \( x_1 \)^\ga + \la G_\Ome \( x_2 \)^\ga \)^{1/\ga} ,
\end{align*}
which completes the proof.
\end{proof}

\begin{rem}
{\rm 
We refer to \cite[Corollary 3.5]{BL} for the power concavity of a function of the form in Theorem \ref{power-concavity}. We remark that the {\em strict} power concavity was not discussed in \cite{BL}.
}
\end{rem}

\begin{lem}\label{concavitym}
Let $\psi :[0,+\infty ) \to \R$ be decreasing and strictly concave (increasing and strictly convex) on $[0,+\infty)$. Then, the function
\[
\R^m \ni z \mapsto \psi \( \lvert z \rvert \) \in \R
\]
is strictly concave (resp. strictly convex) on $\R^m$.
\end{lem}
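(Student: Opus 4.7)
The plan is to reduce the statement to the scalar (strict) concavity/convexity of $\psi$ via the triangle inequality, and then track where strictness can be lost. I will only write the concave case; the convex case is symmetric.

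Fix distinct points $z_1, z_2 \in \R^m$ and $0<\la <1$. The triangle inequality gives
\[
\lvert (1-\la )z_1 + \la z_2 \rvert \leq (1-\la )\lvert z_1 \rvert + \la \lvert z_2 \rvert ,
\]
and strict concavity of $\psi$ gives
\[
\psi \bigl( (1-\la )\lvert z_1 \rvert + \la \lvert z_2 \rvert \bigr) \geq (1-\la ) \psi \bigl( \lvert z_1 \rvert \bigr) + \la \psi \bigl( \lvert z_2 \rvert \bigr) ,
\]
with strict inequality whenever $\lvert z_1 \rvert \neq \lvert z_2 \rvert$. Using the monotonicity of $\psi$, chaining these two estimates produces $\psi ( \lvert (1-\la )z_1 + \la z_2 \rvert ) \geq (1-\la )\psi ( \lvert z_1 \rvert ) + \la \psi ( \lvert z_2 \rvert )$. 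What remains is the strict inequality.

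I split into two cases. If $\lvert z_1 \rvert \neq \lvert z_2 \rvert$, strict inequality comes from the strict concavity step above. If $\lvert z_1 \rvert = \lvert z_2 \rvert$, then $z_1 \neq z_2$ forces a strict triangle inequality $\lvert (1-\la )z_1 + \la z_2 \rvert < \lvert z_1 \rvert = (1-\la )\lvert z_1 \rvert + \la \lvert z_2 \rvert$, and the strictness must then be extracted from the monotonicity step.

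This last point is the only subtle step: the hypothesis only says $\psi$ is decreasing, not strictly decreasing, so I need the side remark that a decreasing strictly concave function on $[0,+\infty )$ is automatically strictly decreasing. Indeed, if $\psi (a) = \psi (b)$ for some $0 \leq a < b$, then for any $a < c < b$ strict concavity gives $\psi (c) > (1-t)\psi (a) + t \psi (b) = \psi (a)$, contradicting that $\psi$ is decreasing. Once this observation is in place, the monotonicity step is strict in the second case and the proof closes. The convex version is handled analogously, noting that an increasing strictly convex function is strictly increasing by the same argument.
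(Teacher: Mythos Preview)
Your proof is correct and follows the same route as the paper's: chain the triangle inequality with the monotonicity of $\psi$, then apply strict concavity. In fact your treatment is more careful than the paper's, which simply writes $\psi((1-\la)\lvert x\rvert+\la\lvert y\rvert) > (1-\la)\psi(\lvert x\rvert)+\la\psi(\lvert y\rvert)$ without separating out the case $\lvert x\rvert=\lvert y\rvert$ or noting that strict concavity forces the decrease to be strict.
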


\begin{proof}
Fix distinct two points $x$ and $y \in \R^m$. Let $0<\la <1$. We have
\[
\psi \( \lvert (1-\la ) x+ \la y \rvert \) 
\geq \psi \( (1-\la ) \lvert x \rvert + \la \lvert y \rvert \) 
> (1-\la ) \psi \( \lvert x \rvert \) + \la \psi \( \lvert y \rvert \) .
\]
Here, the first inequality follows from the triangle inequality and the decreasing behavior of $\psi$, and the second inequality is a direct consequence of the strict concavity of $\psi$.
\end{proof}



\begin{cor}\label{concavityWP}
Let $\Ome$ be a convex body in $\R^m$. 
\begin{enumerate}[$(1)$]
\item For each positive $h$, the function $P_\Ome (\cdot ,h) :\R^m \to \R$ is strictly $-1$-concave on $\R^m$.
\item For each positive $t$, the function $W_\Ome (\cdot ,t): \R^m \to \R$ is strictly log-concave on $\R^m$.
\end{enumerate}
\end{cor}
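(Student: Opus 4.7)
The plan is to reduce both statements to Theorem \ref{power-concavity} by choosing, in each case, an exponent $\al$ with $\ga = \al/(1+m\al)$ equal to the target concavity class. For (1) I want $\ga=-1$, which forces $\al=-1/(m+1)$, and for (2) I want $\ga=0$, which forces $\al=0$. Both choices satisfy $\al \geq -1/m$, so Theorem \ref{power-concavity} applies provided I verify that the corresponding kernel $\f(z)=p(z,h)$ or $\f(z)=w(z,t)$ is continuous and strictly $\al$-concave on $\R^m$.

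For (1), I would argue that $p(\cdot,h)$ being strictly $(-1/(m+1))$-concave is equivalent, since $\al<0$, to the strict convexity of
\[
p(z,h)^{-1/(m+1)} = \left(\frac{2h}{\sigma_m(S^m)}\right)^{-1/(m+1)} \sqrt{\lvert z \rvert^2 + h^2}.
\]
I would then invoke Lemma \ref{concavitym} applied to $\psi(r)=\sqrt{r^2+h^2}$ on $[0,+\infty)$: $\psi'(r)=r/\sqrt{r^2+h^2}\geq 0$ and $\psi''(r)=h^2/(r^2+h^2)^{3/2}>0$, so $\psi$ is strictly convex and increasing, and therefore $z\mapsto \sqrt{\lvert z \rvert^2 + h^2}$ is strictly convex on $\R^m$. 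Theorem \ref{power-concavity} with $\al=-1/(m+1)$, $\ga=-1$ then delivers the strict $(-1)$-concavity of $P_\Ome(\cdot,h)$.

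For (2), the log-concavity step is essentially immediate: the Weierstrass kernel satisfies
\[
\log w(z,t) = -\frac{m}{2}\log(4\pi t) - \frac{\lvert z \rvert^2}{4t},
\]
whose Hessian in $z$ equals $-(2t)^{-1}I$, strictly negative definite. Hence $w(\cdot,t)$ is strictly log-concave. Applying Theorem \ref{power-concavity} with $\al=0$ (and $\ga=0$, in the sense of the limiting definition understood in the statement of strict $0$-concavity) yields strict log-concavity of $W_\Ome(\cdot,t)$.

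The main obstacle is not really conceptual—Theorem \ref{power-concavity} does the heavy lifting via Brascamp--Lieb—but one should be careful that the values $\al=-1/(m+1)$ and $\al=0$ are admissible ($\al\geq -1/m$), that the continuity hypothesis on $\f$ is met (both kernels are smooth), and that the ``$\al=0$'' case of Theorem \ref{power-concavity} is indeed covered by the argument through the standard interpretation of strict log-concavity recalled in the definition at the start of Section 2.5. If the statement of Theorem \ref{power-concavity} is read strictly in the range $\al>-1/m$ with the formal identity $\ga=\al/(1+m\al)$, the case $\al=0$ should be verified directly (or by taking a limit $\al \to 0$ in the strict inequality), which is the only slightly delicate point I foresee.
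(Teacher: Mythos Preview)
Your proposal is correct and follows essentially the same route as the paper: the paper's proof is the two-line argument ``From Lemma \ref{concavitym}, Poisson's kernel $p(\cdot ,h)$ is strictly $-1/(m+1)$-concave on $\R^m$; from Theorem \ref{power-concavity}, we obtain the conclusion,'' and your write-up simply unpacks this (and the analogous $\al=0$ case for $W_\Ome$) with the explicit computations. Your caution about $\al=0$ is unnecessary here, since both Theorem \ref{power-concavity} and the underlying Theorem \ref{blthm} are stated for all $\al \geq -1/m$ with the convention for $\al=0$ already fixed in the definition of power concavity.
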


\begin{proof}
From Lemma \ref{concavitym}, Poisson's kernel $p(\cdot ,h)$ is strictly $-1/(m+1)$-concave on $\R^m$. From Theorem \ref{power-concavity}, we obtain the conclusion.
\end{proof}

\begin{cor}\label{uniquenessWP}
Let $\Ome$ be a convex body in $\R^m$. 
\begin{enumerate}[$(1)$]
\item For each positive $h$, the function $P_\Ome (\cdot ,h) :\R^m \to \R$ has a unique critical point. In particular, every convex body has a unique illuminating center for each $h>0$.
\item For each positive $t$, the function $W_\Ome (\cdot ,t) :\R^m \to \R$ has a unique critical point. In particular, every convex body has a unique hot spot for each $t>0$.
\end{enumerate}
\end{cor}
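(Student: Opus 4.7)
The plan is to combine the strict power concavity established in Corollary \ref{concavityWP} with the abstract uniqueness principle of Corollary \ref{uniqueness}. Corollary \ref{uniqueness} requires the function to be a positive $C^2$ function that is strictly $\al$-concave on $\R^m$; once those hypotheses are checked for $P_\Ome(\cdot ,h)$ and $W_\Ome(\cdot ,t)$, uniqueness of a critical point follows formally, and existence is already known.

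For the first assertion, I would argue as follows. For each fixed $h>0$ the function
\[
P_\Ome (x,h) = \frac{2h}{\sigma_m \(S^m\)} \int_\Ome \frac{dy}{\(\lvert x-y\rvert^2 + h^2\)^{(m+1)/2}}
\]
is strictly positive on $\R^m$ because $\Ome$ has positive Lebesgue measure and the integrand is strictly positive. Its $C^\infty$ regularity follows from the smoothness of Poisson's kernel in $x$ together with the standard differentiation-under-the-integral argument, which is justified because $\Ome$ is compact and the derivatives of the kernel in $x$ are bounded uniformly for $y \in \Ome$. By Corollary \ref{concavityWP}(1) the function $P_\Ome(\cdot ,h)$ is strictly $-1$-concave on $\R^m$, so Corollary \ref{uniqueness} gives at most one critical point. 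On the other hand, Proposition \ref{existenceP}(1) yields the existence of a maximizer of $P_\Ome(\cdot ,h)$, which is in particular a critical point. Hence the critical point is unique, and it coincides with the unique illuminating center.

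The second assertion is entirely parallel. For fixed $t>0$, $W_\Ome(\cdot ,t)$ is strictly positive on $\R^m$ and of class $C^\infty$ for the same reasons as above, applied to Weierstrass' kernel. By Corollary \ref{concavityWP}(2) it is strictly log-concave on $\R^m$, so Corollary \ref{uniqueness} again gives at most one critical point, while Theorem \ref{existenceW}(1) provides a maximizer. This unique critical point is the hot spot.

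I do not expect a real obstacle here: the content of the statement is already packaged into the previous results. The only non-trivial step is the verification that $P_\Ome(\cdot ,h)$ and $W_\Ome(\cdot ,t)$ satisfy the positivity and $C^2$ hypotheses of Corollary \ref{uniqueness}, which is routine, so the proof will be short.
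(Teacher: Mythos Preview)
Your proof is correct and follows exactly the same approach as the paper: combine Corollaries \ref{uniqueness} and \ref{concavityWP} for ``at most one,'' and Proposition \ref{existenceP} (respectively Theorem \ref{existenceW}) for ``at least one.'' The only difference is that you spell out the routine verification of the positivity and $C^2$ hypotheses, which the paper leaves implicit.
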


\begin{proof}
From Corollaries \ref{uniqueness} and \ref{concavityWP}, the function $P_\Ome (\cdot ,h)$ has at most one critical point. From Proposition \ref{existenceP}, $P_\Ome (\cdot ,h)$ has at least one maximum (critical) point. 
\end{proof}

\begin{cor}\label{smoothnessWP}
Let $\Ome$ be a convex body in $\R^m$. 
\begin{enumerate}[$(1)$]
\item The map that assigns the unique critical point of $P_\Ome (\cdot ,h)$ for each $h>0$ is smooth on the interval $(0,+\infty)$.
\item The map that assigns the unique critical point of $W_\Ome (\cdot ,t)$ for each $t>0$ is smooth on the interval $(0,+\infty)$.
\end{enumerate}
\end{cor}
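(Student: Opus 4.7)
The plan is to apply the implicit function theorem to the gradient equation $\nabla_x P_\Omega(x,h)=0$ (and analogously to $\nabla_x W_\Omega(x,t)=0$). Three things must be checked: joint smoothness of $P_\Omega$ in $(x,h)$, existence of a well-defined single-valued selection $h\mapsto c(h)$, and invertibility of the Hessian $D_x^2 P_\Omega(c(h),h)$ so that the implicit function theorem actually applies.

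First, Poisson's kernel
\[
p(z,h)=\frac{2}{\sigma_m(S^m)}\frac{h}{(\lvert z\rvert^2+h^2)^{(m+1)/2}}
\]
is $C^\infty$ on $\R^m\times(0,+\infty)$, and since $\Omega$ is bounded, differentiation under the integral sign shows $P_\Omega\in C^\infty(\R^m\times(0,+\infty))$. Corollary \ref{uniquenessWP} guarantees that for each $h>0$ the function $P_\Omega(\cdot,h)$ has a unique critical point $c(h)$, so the selection is unambiguous.

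The main step, and the only nontrivial one, is the non-degeneracy of the spatial Hessian. By Corollary \ref{concavityWP}, $P_\Omega(\cdot,h)$ is strictly $(-1)$-concave on $\R^m$, and of course $P_\Omega(\cdot,h)>0$ everywhere since $\Omega$ has positive measure and $p>0$. Applying Proposition \ref{Kenprop4} with $\al=-1$ yields, for every $x\in\R^m$ and every $v\in S^{m-1}$,
\[
P_\Omega(x,h)\,\frac{\pd^2 P_\Omega}{\pd v^2}(x,h) - 2\left(\frac{\pd P_\Omega}{\pd v}(x,h)\right)^{\!2}<0.
\]
Evaluating at $x=c(h)$, where all first-order directional derivatives vanish, this forces
\[
P_\Omega(c(h),h)\,\frac{\pd^2 P_\Omega}{\pd v^2}(c(h),h)<0
\]
for every $v\in S^{m-1}$. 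Dividing by the positive quantity $P_\Omega(c(h),h)$, we obtain that $D_x^2 P_\Omega(c(h),h)$ is strictly negative definite, hence invertible.

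The implicit function theorem then produces, in a neighborhood of any fixed $h_0>0$, a unique smooth curve $h\mapsto\tilde c(h)$ with $\nabla_x P_\Omega(\tilde c(h),h)=0$. By uniqueness of the critical point for each $h$ (Corollary \ref{uniquenessWP}), $\tilde c(h)=c(h)$, so the global map $h\mapsto c(h)$ agrees locally with a $C^\infty$ function and is therefore $C^\infty$ on $(0,+\infty)$. The argument for $W_\Omega(\cdot,t)$ is identical, using smoothness of Weierstrass' kernel on $\R^m\times(0,+\infty)$ together with the strict log-concavity of $W_\Omega(\cdot,t)$ from Corollary \ref{concavityWP}(2), applied to Proposition \ref{Kenprop4} with $\al=0$.
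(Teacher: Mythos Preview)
Your proof is correct and follows essentially the same route as the paper's: strict power concavity (Corollary~\ref{concavityWP}) combined with Proposition~\ref{Kenprop4} at the critical point forces the spatial Hessian to be negative definite, and the implicit function theorem does the rest. You have simply spelled out in full the details that the paper's two-line argument leaves to the reader.
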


\begin{proof}
Let $c(h)$ be the unique critical point of $P_\Ome (\cdot ,h)$ for each $h>0$. From Proposition \ref{Kenprop4} and Corollary \ref{concavityWP}, at $c(h)$, the Hessian of $P_\Ome (\cdot , h)$ does not vanish. Hence the implicit function theorem implies the smoothness of the map $h \mapsto c(h)$.
\end{proof}

\begin{rem}
{\rm 
For the function $W_\Ome (\cdot ,t)$, the statements in Corollaries \ref{concavityWP}, \ref{uniquenessWP} and \ref{smoothnessWP} were essentially known in \cite[Section 6]{BL} (see also \cite[p.2]{MS4}).
}
\end{rem}
\section{Appendix}

In this section, for the Laplace equation on the upper half space, we give corresponding results to \cite[Theorem 2]{MS1} and \cite[Theorem 4]{MS2}. The proofs are similar to those in \cite{MS1, MS2} (see also \cite[Theorem 1]{Skg}).

\begin{thm}\label{balanceP2}
Let $\Ome$ be a domain in $\R^m$ containing the origin, $r_* = \dist (0, \Ome^c )$, and $(a,b)$ a non-empty interval in $(0,+\infty)$. Let $u :\Ome \times (a,b) \to \R$ satisfy the Laplace equation in the cylinder $\Ome \times (a,b )$. The origin is a critical point of $u(\cdot ,h) :\Ome \to \R$ for any $h \in (a,b)$ if and only if $u$ satisfies the balance law at the origin, that is, 
\[
\int_{rS^{m-1}} u(v,h) vd \sigma (v) =0
\]
for any $r \in [0,r_*)$ and $h \in (a,b)$.
\end{thm}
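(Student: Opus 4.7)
The plan is to reduce both sides of the equivalence to statements about the spherical moments
\[
g_i (r,h) := \int_{S^{m-1}} u(r\omega ,h) \omega_i d\sigma (\omega ) ,\ i=1,\ldots ,m,\ 0 \leq r < r_*,\ h \in (a,b) .
\]
The balance law at the origin is equivalent to $g_i(r,h)=0$ for all such $r$, $h$ and every $i$, while the critical point condition $\nabla_x u(0,h)=0$ amounts to $(\pd u/\pd x_i)(0,h)=0$ for every $h$ and $i$. Since $u$ is harmonic on $\Ome \times (a,b)$ it is real-analytic there, and since $\omega \mapsto -\omega$ flips the sign of $g_i$, the function $g_i(\cdot , h)$ is an odd real-analytic function of $r$ on $[0, r_*)$ admitting a power series
\[
g_i (r,h) = \sum_{k=0}^\infty c_k^{(i)} (h) r^{2k+1}
\]
convergent in a neighborhood of $r=0$. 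Reading off the linear term from the Taylor expansion of $u$ in $x$, the leading coefficient is
\[
c_0^{(i)} (h) = \frac{\sigma (S^{m-1})}{m} \frac{\pd u}{\pd x_i}(0,h) ,
\]
because only the contribution linear in $x_i$ produces a nonzero angular integral against $\omega_i$ at order $r$.

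With this set-up, the direction ``balance law $\Rightarrow$ critical point'' is immediate: if $g_i \equiv 0$ then the leading coefficient $c_0^{(i)}$ must vanish.

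The main obstacle is the reverse implication, where the PDE enters. Using the polar decomposition $\De_x = \pd_r^2 + \frac{m-1}{r} \pd_r + \frac{1}{r^2} \De_{S^{m-1}}$, the fact that $\omega_i$ is a degree-one spherical harmonic (hence an eigenfunction of $\De_{S^{m-1}}$ with eigenvalue $-(m-1)$), and self-adjointness of $\De_{S^{m-1}}$, the equation $\De_x u + \pd_h^2 u =0$ transforms into
\[
\pd_r^2 g_i + \frac{m-1}{r} \pd_r g_i - \frac{m-1}{r^2} g_i + \pd_h^2 g_i =0 .
\]
Substituting the power series and matching coefficients of $r^{2k-1}$ yields the recursion
\[
c_{k+1}^{(i)}(h) = -\frac{(c_k^{(i)})''(h)}{2(k+1)(2k+2+m)} .
\]
If $c_0^{(i)} \equiv 0$ on $(a,b)$, then $(c_0^{(i)})'' \equiv 0$, so $c_1^{(i)} \equiv 0$, and by induction every $c_k^{(i)}$ vanishes identically. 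Hence $g_i$ vanishes in a neighborhood of $r=0$, and by real-analyticity of $g_i$ in $r$ on the connected interval $[0,r_*)$ (inherited from the analyticity of $u$ on $\Ome \times (a,b) \supset B_{r_*}(0) \times (a,b)$), it vanishes throughout, completing the proof.
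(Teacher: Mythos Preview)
Your argument is correct and follows essentially the same route as the paper: both introduce the spherical first moments (the paper's vector $F(r,h)=\int_{S^{m-1}} u(rv,h)\,v\,d\sigma(v)$ is your $(g_1,\ldots,g_m)$), derive the same Bessel-type equation
\[
\partial_r^2 F + \frac{m-1}{r}\partial_r F - \frac{m-1}{r^2}F + \partial_h^2 F = 0
\]
from the polar decomposition of $\Delta_x$ and the eigenvalue $-(m-1)$ of the coordinate functions on $S^{m-1}$, and then show recursively that all $r$-derivatives (equivalently, your power-series coefficients $c_k^{(i)}$) vanish at $r=0$, each step using that the previous coefficient is identically zero as a function of $h$. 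The only difference is in the final extension to all of $[0,r_*)$: the paper passes to the antiderivative $G(r,h)=\int_0^r F(\rho,h)\,d\rho$, observes that $\tilde G(x,h)=G(|x|,h)$ is harmonic on $r_*B^m\times(a,b)$, and invokes analyticity of harmonic functions; you bypass this by appealing directly to the real-analyticity of $g_i(\cdot,h)$ on $(-r_*,r_*)$, which is slightly more economical. Both devices serve the same purpose, so the two proofs should be regarded as the same argument in different clothing.
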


\begin{proof}
We first show the ``if'' part. Suppose that $u$ satisfies the balance law at the origin. The divergence theorem implies 
\[
\int_{rB^m} \nabla u(x,h) dx = \frac{1}{r} \int_{rS^{m-1}} u(x,h) x d\sigma (x) =0 .
\]
Hence we obtain
\[
\nabla u(0,h) = \lim_{r\to 0^+} \frac{1}{\Vol \( rB^m \)} \int_{rB^m} \nabla u(x,h) dx =0 
\]
for any $h \in (a,b)$.

Next, we show the ``only if'' part. Suppose that $\nabla u(0,h) =0$ for any $h \in (a,b)$. Put
\[
F(r,h) = \int_{S^{m-1}} u(rv, h) v d\sigma (v) ,\ G(r,h) = \int_0^r F( \rho ,h) d\rho .
\]
Since $u$ satisfies the Laplace equation in the cylinder $\Ome \times (a,b)$, we have
\begin{align*}
0&= \int_{S^{m-1}} \( \frac{\pd^2}{\pd h^2} + \De \) u(rv,h) v d\sigma (v) \\
&=\int_{S^{m-1}} \( \(\frac{\pd^2}{\pd h^2} +\frac{\pd^2}{\pd r^2} +\frac{m-1}{r} \frac{\pd}{\pd r} +\frac{1}{r^2} \De_{S^{m-1}} \) u(rv,h) \) v d\sigma (v) \\
&= \(\frac{\pd^2}{\pd h^2} +\frac{\pd^2}{\pd r^2} +\frac{m-1}{r} \frac{\pd}{\pd r}  \) F(r,h) +\frac{1}{r^2} \int_{S^{m-1}} \( \De_{S^{m-1}} u(rv,h) \)v d\sigma (v) ,
\end{align*}
where $\De_{S^{m-1}}$ denotes the Laplace-Beltrami operator on $S^{m-1}$. It is well-known that $\De_{S^{m-1}} v = -(m-1)v$ (see \cite[pp.34--35]{C}). Therefore, integration by parts implies
\[
\int_{S^{m-1}} \( \De_{S^{m-1}} u(rv,h) \)v d\sigma (v) = -(m-1) \int_{S^{m-1}} u(rv,h)v d\sigma (v) = -(m-1) F(r,h) ,
\]
and we have
\[
\(\frac{\pd^2}{\pd h^2} +\frac{\pd^2}{\pd r^2} +\frac{m-1}{r} \frac{\pd}{\pd r}  -\frac{m-1}{r^2}  \) F(r,h) =0
\]
for any $r \in [0,r_*)$ and $h \in (a,b)$.

Let us show 
\[
\frac{\pd^n F}{\pd r^n}(0,h) =0
\]
for any non-negative integer $n$ by induction. From the definition of $F$, we have
\[
F(0,h) = u(0,h) \int_{S^{m-1}} v d\sigma (v) =0,
\]
and, from the assumption of $u$, we have
\[
\frac{\pd F}{\pd r}(0,h) = \int_{S^{m-1}} \( \nabla u(0,h) \cdot v \) v d\sigma (v) =0 .
\]
Suppose 
\[
\frac{\pd F}{\pd r}(0,h) = \cdots = \frac{\pd^n F}{\pd r^n}(0,h) =0 .
\]
Using the equation for $F$, we have
\begin{align*}
0&= \frac{\pd^{n+1}}{\pd r^{n+1}} \(r^2 \frac{\pd^2}{\pd h^2} +r^2 \frac{\pd^2}{\pd r^2} +(m-1) r \frac{\pd}{\pd r}  -(m-1)  \) F(r,h)\\
&=\frac{\pd^2}{\pd h^2} \sum_{j=0}^{n+1} \binom{n+1}{j} \(\frac{\pd^j}{\pd r^j} r^2 \) \frac{\pd^{n+1-j}F}{\pd r^{n+1-j}} (r,h)
+ \sum_{j=0}^{n+1} \binom{n+1}{j} \(\frac{\pd^j}{\pd r^j} r^2 \) \frac{\pd^{n+3-j}F}{\pd r^{n+3-j}} (r,h) \\
&\quad + (m-1) \sum_{j=0}^{n+1} \binom{n+1}{j} \(\frac{\pd^j}{\pd r^j} r \) \frac{\pd^{n+2-j}F}{\pd r^{n+2-j}} (r,h)
-(m-1) \frac{\pd^{n+1}F}{\pd r^{n+1}} (r,h) .
\end{align*}
Putting $r=0$, the assumption of induction implies
\[
n(m+n)  \frac{\pd^{n+1} F}{\pd r^{n+1}} (0,h) =0 ,
\]
that is, for any non-negative integer $n$, the $n$-th derivative of $F$ with respect to $r$ vanishes at $(0,h)$.

Using the properties of $F$, let us show that $G=0$ in $[0,r_*) \times (a,b)$. Integrating the equation for $F$ with respect to $r$, and using the relation $\pd G /\pd r=F$, we get
\[
\( \frac{\pd^2}{\pd h^2} +\frac{\pd^2}{\pd r^2} +\frac{m-1}{r} \frac{\pd}{\pd r} \) G(r,h)=0.
\]
Since all the derivatives of $F$ with respect to $r$ vanish at $(0,h)$, those of $G$ so are. Furthermore, since $G(0,h)=0$ for any $h \in (a,b)$, all the derivatives of $G$ with respect to $h$ vanish. Let $\tilde{G}(x,h) = G( \vert x \vert ,h)$. Then $\tilde{G}$ is harmonic in $r_* B^m \times (a,b)$. Expanding the function $\tilde{G}$ in a neighborhood of $(0, (a+b)/2)$, we obtain
\[
\tilde{G}(x,h) = \sum_{n=0}^{+\infty} \sum_{\lvert \al \rvert =n} \frac{1}{\al !} D^\al \tilde{G} \( 0,\frac{a+b}{2} \) \( x, h-\frac{a+b}{2} \)^\al =0.
\]
The analyticity of $\tilde{G}$ implies $G=0$ in $[0,r_*) \times (a,b)$, and hence $F=0$ in $[0,r_*) \times (a,b)$.
\end{proof}

\begin{thm}\label{balanceP3}
Let $f$ be a bounded function on $\R^m$. The origin is a zero point of Poisson's integral $Pf(\cdot ,h) :\R^m \to \R$ for any $h>0$ if and only if the function $f$ satisfies the balance law, that is,
\[
\int_{rS^{m-1}} f(v) d\sigma (v) =0
\]
for any $r\geq 0$.
\end{thm}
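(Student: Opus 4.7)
The plan is to mirror the argument given for Theorem \ref{balanceP1}, replacing the vector factor $y$ in the integrand by the scalar $1$. By Lemma \ref{kernel} together with Fubini's theorem (whose application is justified by the boundedness of $f$ and the integrability of $p(\cdot,h)$), the condition $Pf(0,h) = 0$ for every $h > 0$ is equivalent to
\[
\int_0^{+\infty} e^{-s^2} s^m \left( \int_{\R^m} \exp\!\left(-\frac{\lvert y \rvert^2}{h^2} s^2\right) f(y)\, dy \right) ds = 0
\]
for every $h > 0$.

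I would then change the variable $s$ to $u = (s/h)^2$ in the outer integral to recognize the resulting expression as (a positive constant times) the Laplace transform in $h^2$ of the function
\[
u \mapsto u^{(m-1)/2} \int_{\R^m} e^{-u \lvert y \rvert^2} f(y)\, dy.
\]
Injectivity of the Laplace transform then yields
\[
\int_{\R^m} e^{-u \lvert y \rvert^2} f(y)\, dy = 0 \quad \text{for every } u > 0.
\]

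For the next step, I would introduce polar coordinates $y = rv$ with $r \geq 0$ and $v \in S^{m-1}$, and then substitute $s = r^2$. The left-hand side transforms into
\[
\frac{1}{2} \int_0^{+\infty} e^{-us} s^{(m-2)/2} \left( \int_{S^{m-1}} f\!\left( \sqrt{s}\, v \right) d\sigma (v) \right) ds
= \frac{1}{2} \mathcal{L} \left[ \bullet^{(m-2)/2} \int_{S^{m-1}} f\!\left( \sqrt{\bullet}\, v \right) d\sigma (v) \right] (u).
\]
A second application of the injectivity of the Laplace transform gives $\int_{S^{m-1}} f(rv)\, d\sigma(v) = 0$ for every $r \geq 0$, which is the balance law. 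Since each equivalence above is an ``if and only if,'' both directions of the statement are obtained simultaneously.

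The routine computations parallel those already carried out in Theorem \ref{balanceP1}, so no genuinely new difficulty arises. The only care needed is to check absolute integrability when swapping the order of integration, which follows from the boundedness of $f$, and to observe that the argument continues to work in the case $m = 1$, where $\sigma$ on $S^0 = \{ -1, +1\}$ is counting measure and the balance law reduces to $f(r) + f(-r) = 0$ for almost every $r \geq 0$.
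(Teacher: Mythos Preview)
Your proposal is correct and follows essentially the same approach as the paper's own proof: both use Lemma~\ref{kernel} to rewrite $Pf(0,h)$, perform the substitution $u=(s/h)^2$ to obtain a Laplace transform in $h^2$, invoke injectivity to reduce to $\int_{\R^m} e^{-u|y|^2} f(y)\,dy=0$, and then pass to polar coordinates with $s=r^2$ for a second application of Laplace-transform injectivity. Your added remarks on Fubini and the case $m=1$ are reasonable elaborations not spelled out in the paper, but the core argument is the same.
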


\begin{proof}
Thanks to Lemma \ref{kernel}, the condition $Pf(0,h)=0$ for any $h>0$ is equivalent to
\[
\int_0^{+\infty} e^{-s^2} s^m \( \int_{\R^m} \exp \( -\frac{\lvert y\rvert^2}{h^2}s^2 \) f(y)dy \)ds =0 
\]
for any $h>0$. 

Changing the valuable $s$ as $u= (s/h)^2$, the left hand side becomes
\[
\frac{h^{m+1}}{2} \int_0^{+\infty} e^{-h^2u} u^{(m-1)/2} \( \int_{\R^m} e^{-u \lvert y \rvert^2} f(y) dy \) du
= \frac{h^{m+1}}{2} \mathcal{L} \left[ \bullet^{(m-1)/2} \int_{\R^m} e^{-\bullet \lvert y \rvert^2} f(y) dy \right] \( h^2 \) ,
\]
where the symbol $\mathcal{L}$ denotes the Laplace transform. Therefore, the condition $Pf (0,h) =0$ for any $h>0$ is equivalent to 
\[
\int_{\R^m} e^{-u \lvert y \rvert^2} f(y) dy =0
\]
for any $u> 0$.

Using the polar coordinate, the left hand side becomes
\begin{align*}
\int_0^{+\infty} e^{-ur^2} r^{m-1} \( \int_{S^{m-1}} f(rv) d\sigma (v) \) dr
&=\frac{1}{2} \int_0^{+\infty} e^{-us} s^{(m-2)/2} \( \int_{S^{m-1}} f \( \sqrt{s} v\)  d\sigma (v) \) ds \\
&=\frac{1}{2} \mathcal{L} \left[ \bullet^{(m-2)/2} \int_{S^{m-1}} f\( \sqrt{\bullet} v\)  d\sigma (v) \right] (u) ,
\end{align*}
where we changed the valuable $r$ as $s=r^2$ in the first equality. Hence the injectivity of the Laplace transform implies the conclusion.
\end{proof}

\begin{thm}\label{balanceP4}
Let $\Ome$ be a domain in $\R^m$ containing the origin, $r_* = \dist (0, \Ome^c )$, and $(a,b)$ a non-empty interval in $(0,+\infty)$. Let $u :\Ome \times (a,b) \to \R$ satisfy the Laplace equation in the cylinder $\Ome \times (0,+\infty )$. The origin is a zero point of $u(\cdot ,h) :\Ome \to \R$ for any $h \in (a,b)$ if and only if $u$ satisfies the balance law, that is, 
\[
\int_{S^{m-1}} u(rv,h) d \sigma (v) =0
\]
for any $r \in [0,r_*)$ and $h \in (a,b)$.
\end{thm}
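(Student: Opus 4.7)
The plan is to mirror the proof of Theorem \ref{balanceP2}, with the role of the gradient of $u$ played by the value of $u$ at the origin. Define
\[
F(r,h):=\int_{S^{m-1}}u(rv,h)\,d\sigma(v),\qquad (r,h)\in[0,r_*)\times(a,b),
\]
so the claim becomes: $u(0,h)=0$ for every $h\in(a,b)$ if and only if $F\equiv 0$ on $[0,r_*)\times(a,b)$. The ``if'' direction is immediate, since evaluating the balance law at $r=0$ gives $\sigma(S^{m-1})u(0,h)=0$.

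For the ``only if'' direction, assume $u(0,h)=0$ on $(a,b)$. Decomposing the Euclidean Laplacian in polar coordinates, integrating over $S^{m-1}$, and using $\int_{S^{m-1}}\Delta_{S^{m-1}}g\,d\sigma=0$ (integration by parts on a manifold without boundary, so no $F/r^2$ term appears, in contrast to the computation in Theorem \ref{balanceP2} where the eigenfunction identity $\Delta_{S^{m-1}}v=-(m-1)v$ contributes such a term), one obtains
\[
\left(\frac{\pd^2}{\pd h^2}+\frac{\pd^2}{\pd r^2}+\frac{m-1}{r}\frac{\pd}{\pd r}\right)F(r,h)=0\quad\text{on}\ (0,r_*)\times(a,b).
\]
Equivalently, the radial extension $\tilde F(x,h):=F(|x|,h)$ is harmonic on $r_*B^m\times(a,b)$ and, by hypothesis, vanishes on the central axis $\{0\}\times(a,b)$.

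To conclude $\tilde F\equiv 0$, I would write $\tilde F(x,h)=G(|x|^2,h)$ (allowed by radiality in $x$) and substitute into the harmonic equation to obtain $4sG_{ss}+2mG_s+G_{hh}=0$. Expanding $G(s,h)=\sum_{j\ge 0}a_j(h)\,s^j$, the boundary condition $G(0,h)=0$ forces $a_0\equiv 0$, and matching coefficients of $s^{j-1}$ in the PDE gives the recursion
\[
2j(2j+m-2)\,a_j(h)+a_{j-1}''(h)=0,\qquad j\ge 1.
\]
Since $2j(2j+m-2)\neq 0$ for $j\ge 1$ and $m\ge 1$, induction yields $a_j\equiv 0$ for all $j$. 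Hence $\tilde F$ vanishes in a neighbourhood of $(0,(a+b)/2)$, and real-analyticity of the harmonic function $\tilde F$ propagates this to all of $r_*B^m\times(a,b)$, giving $F\equiv 0$.

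I expect the main obstacle to be purely formal: justifying convergence of the Taylor expansion in $s$ (which follows from real-analyticity of harmonic functions) and checking that the recursion closes non-degenerately. The substantive mechanism --- harmonicity in the $(x,h)$-cylinder, radial symmetry in $x$, and vanishing on the central axis together forcing $\tilde F\equiv 0$ --- is the scalar analogue of the argument already carried out for the gradient case in Theorem \ref{balanceP2}.
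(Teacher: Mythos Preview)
Your proposal is correct and follows essentially the same approach as the paper: both derive the radial PDE for the spherical mean $F$, recognize $\tilde F(x,h)=F(|x|,h)$ as harmonic, kill all Taylor coefficients at the origin via a recursion coming from the PDE, and then invoke analyticity to conclude $F\equiv 0$. The only difference is cosmetic---you perform the recursion on the coefficients $a_j(h)$ of the expansion in $s=|x|^2$, whereas the paper works directly with $\partial_r^n F(0,h)$; your change of variables is a minor convenience that automatically handles the evenness of $F$ in $r$.
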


\begin{proof}
We immediately show the ``if'' part as
\[
u(0,h) = \frac{1}{\sigma \( S^{m-1} \)} \int_{S^{m-1}} u(0,h) d\sigma (v) =0 .
\]

Let us show the ``only if'' part. Let 
\[
F(r,h) = \int_{S^{m-1}} u(rv,h) d\sigma (v)  
\]
for $r \in [0,r_*)$ and $h \in (a,b)$. Since the function $u$ satisfies the Laplace equation in the cylinder $\Ome \times (a,b)$, we have
\begin{align*}
0&= \int_{S^{m-1}} \( \frac{\pd^2}{\pd h^2} + \De \) u(rv,h) d\sigma (v) \\
&=\int_{S^{m-1}} \( \frac{\pd^2}{\pd h^2} +\frac{\pd^2}{\pd r^2} +\frac{m-1}{r} \frac{\pd}{\pd r} + \frac{1}{r^2}  \De_{S^{m-1}} \) u(rv,h) d\sigma (v) \\
&=\( \frac{\pd^2}{\pd h^2} +\frac{\pd^2}{\pd r^2} +\frac{m-1}{r} \frac{\pd}{\pd r} \) F(r,h) .
\end{align*}

By induction, let us show 
\[
\frac{\pd^n F}{\pd r^n}(0,h) =0
\]
for any non-negative integer $n$. We remark that the assumption of $u$ implies $F(0,h) = 0$. Suppose 
\[
\frac{\pd F}{\pd r}(0,h) = \cdots = \frac{\pd^n F}{\pd r^n}(0,h) =0.
\]
From the equation for $F$, we have
\begin{align*}
0&= \frac{\pd^n}{\pd r^n} \( r\frac{\pd^2}{\pd h^2} +r\frac{\pd^2}{\pd r^2} +(m-1) \frac{\pd}{\pd r} \) F(r,h) \\
&=\frac{\pd^2}{\pd h^2} \sum_{j=0}^n \binom{n}{j} \( \frac{\pd^j}{\pd r^j}r\) \frac{\pd^{n-j} F}{\pd r^{n-j}} (r,h)  +\sum_{j=0}^n \binom{n}{j} \( \frac{\pd^j}{\pd r^j}r\) \frac{\pd^{n+2-j} F}{\pd r^{n+2-j}} (r,h) +(m-1)  \frac{\pd^{n+1} F}{\pd r^{n+1}}(r,h) .
\end{align*}
Putting $r=0$, the assumption of induction implies 
\[
(n+m-1) \frac{\pd^{n+1} F}{\pd r^{n+1}}(0,h) =0 .
\]
Hence all the derivatives of $F(r,h)$ with respect to $r$ vanish at $(0,h)$. Furthermore, since $F(0,h) =0$ for any $h$, all the derivatives of $F(0,h)$ with respect to $h$ vanish.

Let $\tilde{F}(x,h) =F(\vert x \vert ,h)$. Then $\tilde{F}$ is harmonic in $r_* B^m \times (a,b)$. Using the properties on the derivatives of $F$, in a neighborhood of $(0,(a+b)/2)$, we have
\[
\tilde{F} (x,h) = \sum_{n=0}^{+\infty} \sum_{\lvert \al \rvert =n} \frac{1}{\al !} D^\al \tilde{F} \( 0, \frac{a+b}{2} \) \( x,h-\frac{a+b}{2} \)^\al =0 .
\]
The analyticity of $\tilde{F}$ guarantees $\tilde{F}=0$ in $r_* B^m \times (a,b)$, and hence $F=0$ in $[0,r_*) \times (a,b)$.
\end{proof}

\no
Faculty of Education and Culture,\\
University of Miyazaki,\\
1-1, Gakuen Kibanadai West, Miyazaki city, Miyazaki prefecture, 889-2155, Japan\\
E-mail: sakata@cc.miyazaki-u.ac.jp

\end{document}